\newtheorem{mthm}{Theorem}
\newtheorem{mcor}[mthm]{Corollary}
\newtheorem{mlem}[mthm]{Lemma}
\newtheorem{defn}{Definition}[section]
\newtheorem{thm}[defn]{Theorem}
\newtheorem{lem}[defn]{Lemma}
\newtheorem{cor}[defn]{Corollary}
\newtheorem{prop}[defn]{Proposition}
\newtheorem*{rmk}{Remark}
\numberwithin{equation}{section}
\newcommand{\Aut}{\operatorname{Aut}\!}
\newcommand{\ga}{\alpha}
\newcommand{\gb}{\beta}
\newcommand{\gc}{\gamma}
\newcommand{\CA}{\mathcal{A}}
\newcommand{\CM}{\mathcal{M}}
\newcommand{\B}{\mathfrak{B}}
\newcommand{\abs}[1]{\left|#1\right|}
\newcommand{\bpar}[1]{\left(#1\right)}
\newcommand{\comp}{\raisebox{1pt}{\scalebox{.7}{$\circ$}}}
\newcommand{\dist}{\mathop{\operatorname{dist}}}
\newcommand{\dt}[1]{\frac{\rd{#1}}{\rd t}}
\newcommand{\ddt}[1]{\left.\frac{\pt}{\pt t}\right|_{t={#1}}}
\newcommand{\E}{\mathfrak{E}}
\newcommand{\embedto}{\hookrightarrow}
\newcommand{\End}{\operatorname{End}}
\newcommand{\eng}{\mathcal{E}}
\newcommand{\eps}{\varepsilon}
\newcommand{\epst}{\texorpdfstring{$\eps$}{epsilon}}
\newcommand{\eqdef}{\mathpunct{:}=}
\newcommand{\g}{\mathfrak{g}}
\newcommand{\heatop}{\left(\ppt{t}-\Delta\right)}
\newcommand{\id}{\operatorname{Id}}
\newcommand{\II}{\mathrm{I\!I}}
\newcommand{\inner}[1]{\left\langle#1\right\rangle}
\newcommand{\lh}{\,\rule{2.5mm}{0.5pt}\rule{0.5pt}{1.5mm}\,}
\newcommand{\norm}[1]{\left\|{#1}\right\|}
\newcommand{\osc}{\operatorname{osc}}
\newcommand{\R}{\mathbb{R}}
\newcommand{\rd}{\operatorname{d}}
\renewcommand{\S}{\mathcal{S}}
\newcommand{\set}[1]{\left\{#1\right\}}
\newcommand{\SO}{\mathrm{SO}}
\newcommand{\so}{\mathfrak{so}}
\newcommand{\tr}{\operatorname{tr}}
\newcommand{\pt}{\partial}
\newcommand{\ppt}[1]{\frac{\pt}{\pt{#1}}}
\newcommand{\vppt}[2]{\frac{\pt{#1}}{\pt{#2}}}
\newcommand{\veps}{\texorpdfstring{$\eps$}{Epsilon}}
\newcommand{\weakto}{\rightharpoonup}
\newcommand{\xkf}[1]{\left(#1\right)}
\newcommand{\zkf}[1]{\left[#1\right]}
\newcommand{\dkf}[1]{\left\{#1\right\}}
\def\XXint#1#2#3{{\setbox0=\hbox{$#1{#2#3}{\int}$ }
\vcenter{\hbox{$#2#3$ }}\kern-.6\wd0}}
\newenvironment{abst}{%
	\begin{abstract}%
	}{%
	\end{abstract}%
}
\newenvironment{eq}{\equation}{\endequation}
\title[Flow of Gauge Transformations]{The Flow of Gauge Transformations on Riemannian Surface with Boundary}
\author{Wanjun Ai}
\address{School of Mathematical Sciences, University of Science and Technology of China}%
\email{aiwanjun@mail.ustc.edu.cn}%
\thanks{}%
\subjclass[2010]{58J35, 58E15}%
\keywords{Heat flow; Coulomb gauge; blow-up analysis}%
\date{\today}
\begin{document}
\begin{abst}
	We consider the gauge transformations of a metric $G$-bundle over a compact Riemannian surface with boundary. By employing the heat flow method, the local existence and the long time existence of generalized solution are proved. 
\end{abst}
\maketitle
\section{Introduction and Statement of Main Results}
Let $E$ be a metric vector bundle of rank $r$ with compact structure group $G$ over a Riemannian manifold $\Sigma$. Given two fixed $G$-connections $A_0$ and $A$ on $E$, for any gauge transformation $S$ of the bundle, we define the energy
\[
	\eng(S)\eqdef\int_\Sigma e(S)=\frac{1}{2}\int_\Sigma|S^*(A)-A_0|^2,
\]
which measures the $L^2$ distance between one connection $A_0$ and the other one modified by a gauge transformation $S$. Hence, if $S$ is a critical point of $\eng(S)$, then $S^*(A)$ is in a \emph{natural} position in its orbit of gauge transformations relative to $A_0$. In fact, the \emph{Euler-Lagrange} equation of $\eng(S)$ (forgetting the boundary value problem for the moment) can be written as
\begin{eq}\label{eq:meq}
	\nabla^*_{S^*(A)}(S^*(A)-A_0)=0.
\end{eq}

In terms of geometry, \eqref{eq:meq} means that $S^*(A)-A_0$ is perpendicular to the orbit of gauge group action at $S^*(A)$, because any tangent vector of the orbit at $S^*(A)$ is given by $\nabla_{S^*(A)} \xi$ for a section $\xi$ of the bundle $\g_E$. Either by some geometric argument, or by direct computation (see \eqref{eq:conj_star_diff}), we know that $S^*(A)-A_0$ is also perpendicular to the orbit of gauge group action at $A_0$, i.e.
\begin{equation}\label{eq:gauge_fixing}
	\nabla^*_{A_0}(S^*(A)-A_0)=0.
\end{equation}

When $\Sigma$ is the unit ball $B$ of Euclidean space, $E$ is the trivial bundle $B\times \R^r$ and $A_0$ and $A$ are given by $d$ and $d+\Omega$ for some matrix-valued one form $\Omega$, \eqref{eq:gauge_fixing} reduces to the well known Coulomb gauge condition
\begin{equation*}
	d^*(S^{-1}dS + S^{-1}\Omega S)=0.
\end{equation*}

Although Uhlenbeck proved the local existence of Coulomb gauge by using the method of continuity  in \cite{Uhlenbeck1982Connections}*{Thm.~2.1}, the variational formulation is not new. In \cite{Helein2002Harmonic}*{Lem.~4.1.3}, H\'elein studied a version of $\eng(S)$, obtained a Coulomb gauge by the method of direct minimizing and applied it to the study of the regularity issue of harmonic maps. Both the method of continuity and the direct minimizing method have been generalized to various other situations and the Coulomb gauges thus found were essential in many applications, for example, to the partial regularity of stationary bi-harmonic maps (extrinsic or intrinsic) \citelist{\cite{Wang2004Biharmonic}*{Prop.~3.2} \cite{Wang2004Stationary}*{Prop.~3.1}}; To the partial regularity of Yang-Mills fields satisfying some approximability properties \cite{MeyerRiviere2003partial}*{Thm.~1.3} and to the removable of singularities of admissible Yang-Mills fields \cite{TaoTian2004singularity}*{Thm.~4.6}; To the partial regularity of elliptic systems with skew-symmetric structure  \citelist{\cite{Riviere2007Conservation}*{Lem.~A.4} \cite{RiviereStruwe2008Partial}*{Lem.~4.1}}; To the regularity theory of  Rivi\`ere-Uhlenbeck decomposition \cite{MullerSchikorra2009Boundary}*{Lem.~2.1}. We refer to \citelist{\cite{FrohlichMuller2011existence}*{Prop.~2}\cite{Schikorra2010remark}*{Thm.~2.1}} for direct minimizing method.

The purpose of this paper is to study the negative gradient flow of $\eng$ when $\Sigma$ is a two dimensional Riemannian surface with (possibly empty) boundary. The existence of a Coulomb gauge follows naturally when we study the long time behavior of the flow. More precisely, we study the following evolution problem
\begin{eq}
	\label{eq:mflw}
	\begin{cases}
		S^{-1}\vppt{S}{t}=-\nabla^*_{S^*(A)}(S^*(A)-A_0),
		& (x,t)\in \Sigma\times(0,T), \\
		S(x,0)=S_0(x), & x\in \Sigma, \\
		\nu\lh(S^*(A)-A_0)=0, & (x,t)\in\pt \Sigma\times[0,T). \\ 
	\end{cases}
\end{eq}
Here $A$ and $A_0$ are as before, $S_0$ is the initial gauge transformation, $\nu$ is the normal vector of $\partial \Sigma$ and $\lh$ gives the pairing of a vector and a one form. We will assume the zero order compatibility condition holds (see, e.g., \cite{LadyzenskajaSolonnikovUralceva1988Linear}*{Sec.~I\!V, p.~318ff}), i.e., 
\begin{equation}\label{eq:compatibility}
	\nu\lh(S_0^*(A)-A_0)=0.
\end{equation}

Theoretically, other boundary conditions, such as Dirichlet boundary condition, are possible. The primary reason that we have chosen this one is because in the local setting we have just considered ($E=B\times \R^r$, $A_0=d$ and $A=d+\Omega$), it reduces to
\begin{equation}\label{eq:boundary}
	\nu \lh (S^{-1} dS + S^{-1}\Omega S)=0  \quad \mbox{on} \quad \partial B,	
\end{equation}
which is exactly the boundary condition appeared in \cite{Uhlenbeck1982Connections}*{Thm.~2.1, Cond. (b)}. It is also clear from \eqref{eq:boundary} that \eqref{eq:mflw} is an oblique boundary value problem.

The discussion of \eqref{eq:mflw} is similar to the harmonic map flow from surface, which was first studied by Struwe \cite{Struwe1985evolution} in the case of closed surface, by Chang K.-C. \cite{Chang1989Heat} in the case of Dirichlet boundary condition and by Ma Li \cite{Ma1991Harmonic} in the case of free boundary value condition. This is because if we put everything in local coordinates and frames, $S$ becomes a map into Lie group $G$, the highest order term in $\eng(S)$ is the harmonic map energy of this map and the main term in \eqref{eq:mflw} is the same as the harmonic map flow of a map into $G$ (see \eqref{eq:pullback}). Therefore, the proofs presented in this paper follow ideas well known in the study of harmonic map flow, while being made complicated by the lack of global frame and the oblique boundary value condition.

\begin{mthm}\label{mthm:A}
	Let $E$, $G$, $A$, $A_0$, $\Sigma$ be as above and $S_0\in C^\infty(\Gamma(\Aut_GE))$ (smooth section of the fiber bundle of gauge transformations) with $\eng(S_0)< +\infty$. If the compatibility condition \eqref{eq:compatibility} holds, then
	\begin{enumerate}
		\item\label{lab:1_mthm:A} there exist some $T_1>0$, $\alpha\in(0,1)$ and $S\in C^\infty((0,T_1)\times\Sigma, \Aut_G E)\cap C^{2,\alpha}([0,T_1)\times\Sigma,\Aut_GE)$, which solves \eqref{eq:mflw}.
		\item\label{lab:2_mthm:A} Moreover, if $T_1< +\infty$, then there exist finite many (blowup) points $\set{x_i}_{i=1}^{N}$, such that $S(t)\to S(T_1)$ in $C^\infty_{loc}(\Sigma\setminus\set{x_i}_{i=1}^N,\End E)$,
			where $\End E$ is the endomorphism bundle of $E$ and $S(\cdot, T_1)$ is the $W^{1,2}$-weak limit of $S(\cdot,t)$ as $t\nearrow T_1$.
		\item\label{lab:3_mthm:A} For each blowup point $x_i$ and \emph{any} $t_k\nearrow T_1$, there exist some $x_i^k\to x_i$, $\lambda_i^k\to0$ and a bubble $\omega_i\in C^\infty(\R^2, G)$, which is a harmonic map, such that as $k\to\infty$,
			\[
				w_i^k(x)\eqdef u(x_i^k+\lambda_i^kx, t_k)\to\omega_i,\quad\text{in $C^\infty_{loc}(\R^2,G)$},
			\]
			where $u\mathpunct{:}B_1(x_i)\to G$ is the local expression of $S$ (under a fixed trivialization).
		\item\label{lab:4_mthm:A} Finally, there holds the following \emph{energy inequality}
			\[
				\liminf_{k\to\infty}\eng(S(\cdot,t_k))\geq\eng(S(\cdot,T_1))
				+\sum_{i=1}^N\frac{1}{2}\int_{\R^2}|\nabla\omega_i|^2.
			\]
	\end{enumerate}
\end{mthm}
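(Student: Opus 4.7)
The plan is to follow the scheme developed by Struwe for the harmonic map heat flow on closed surfaces (and adapted to boundary conditions by Chang and Ma), the point being that after choosing local trivialisations, the principal part of \eqref{eq:mflw} coincides with the harmonic map heat flow for a map into the compact Lie group $G$. The basic energy identity
\[
	\dt{}\eng(S(t))=-\int_\Sigma\abs{S^{-1}\vppt{S}{t}}^2
\]
will drive the whole analysis. For part \ref{lab:1_mthm:A}, I would fix a finite atlas of trivialising charts so that \eqref{eq:mflw} becomes a system of semilinear parabolic equations for a map $u\colon\Sigma\to G$ with the oblique boundary condition \eqref{eq:boundary}; short-time existence in $C^{2,\alpha}([0,T_1)\times\Sigma,\Aut_G E)$ then follows from a standard linearisation plus contraction-mapping argument in parabolic Hölder spaces, using the Schauder theory for oblique boundary value problems in \cite{LadyzenskajaSolonnikovUralceva1988Linear}. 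The compatibility condition \eqref{eq:compatibility} is exactly what is needed to match the boundary values at $t=0$, and smoothness for $t>0$ follows by the usual parabolic bootstrap.

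The engine for parts \ref{lab:2_mthm:A}--\ref{lab:4_mthm:A} is an \emph{$\eps$-regularity lemma}: there exists $\eps_0>0$, depending only on the geometry, such that whenever
\(
	\sup_{\tau\in[t_0-r^2,t_0]}\int_{B_r(x_0)\cap\Sigma}e(S(\cdot,\tau))<\eps_0,
\)
one obtains an a priori $C^{2,\alpha}$-bound for $S$ on the half-sized parabolic cylinder. I would prove this by a Bochner/Moser iteration for $|S^*(A)-A_0|^2$, treating the terms involving $A_0$ and $A$ as lower-order perturbations; at boundary balls the oblique condition \eqref{eq:boundary} replaces the interior argument with a suitable reflection plus boundary Schauder estimate. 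Combined with the energy monotonicity, the lemma forces the concentration set at $t=T_1$ to contain at most $\lfloor 2\eng(S_0)/\eps_0\rfloor$ points $\set{x_i}_{i=1}^N\subset\Sigma$, and away from them $S(\cdot,t)$ is uniformly bounded in $C^{2,\alpha}$; Arzelà–Ascoli together with weak $W^{1,2}$-precompactness then gives the strong $C^\infty_{loc}$-convergence and the weak limit $S(\cdot,T_1)\in W^{1,2}$ claimed in \ref{lab:2_mthm:A}.

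For \ref{lab:3_mthm:A}, at each blow-up point $x_i$ and along any $t_k\nearrow T_1$, I would use Struwe's scaling to select $x_i^k\to x_i$ and $\lambda_i^k\to 0$ so that the rescaled map $w_i^k(x)=u(x_i^k+\lambda_i^k x,t_k)$ carries unit-scale energy close to $\eps_0/2$ on $B_1$, while still satisfying an $\eps$-regularity-compatible bound at every smaller scale. Because the rescaled equation has the connection-dependent coefficients multiplied by positive powers of $\lambda_i^k$, those terms vanish in the limit, so the limiting map $\omega_i\colon\R^2\to G$ is a weakly (hence, by Hélein's regularity theorem, smoothly) harmonic map; the $\eps$-regularity bound upgrades the convergence $w_i^k\to\omega_i$ to $C^\infty_{loc}(\R^2,G)$. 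The energy inequality \ref{lab:4_mthm:A} then comes out by splitting $\eng(S(\cdot,t_k))$ as the sum of the energy on $\Sigma\setminus\bigcup_i B_{r}(x_i)$ (which converges by smooth convergence away from the blow-up set) plus the energies on small disks around the $x_i$; applying Fatou on each disk after rescaling captures at least $\tfrac12\int_{\R^2}|\nabla\omega_i|^2$, and disjointness of the individual blow-up points lets one sum the contributions.

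The principal technical obstacle I foresee is the boundary version of the $\eps$-regularity estimate under the oblique condition \eqref{eq:boundary}: unlike the Neumann or Dirichlet cases, naive even/odd reflection does not reduce the problem to an interior estimate, so one must work directly with the oblique boundary parabolic Schauder/Moser machinery. A closely related subtlety is that a blow-up point may lie on $\pt\Sigma$, yet the theorem asserts that the bubble $\omega_i$ is a harmonic map on the full plane $\R^2$: this is consistent only if, after rescaling by $\lambda_i^k$ at a boundary point, the oblique condition degenerates to a tangential identity that permits an analytic extension of $w_i^k$ across the limiting line. Carrying out this reflection carefully, and checking that the extension still solves the rescaled flow and thus produces a planar bubble, is where I expect the argument to require the most work.
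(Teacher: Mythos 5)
Your plan follows the broad Struwe scheme that the paper also uses, and the overall architecture ($\eps$-regularity $\Rightarrow$ finiteness of singularities, rescaling at concentration points, energy accounting) is sound. But there are three concrete places where your proposal is missing an idea that the paper supplies, and the first of them is not a technicality but a genuine gap.

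\textbf{Staying on the constraint manifold.} In (i) you solve a semilinear parabolic system after trivialising; that gives a map into $\R^{r\times r}$ (or into $\End E$), not a priori into $G$ (or $\Aut_G E$). Contraction mapping in Hölder spaces does not produce a solution that respects the nonlinear constraint $u(x,t)\in G$. The paper devotes Prop.~2.2 to exactly this: following the Eells--Sampson approach, one evolves the squared distance $\rho(x,t)$ from $S(x,t)$ to the fibre submanifold and shows, via the ``normal part vanishing'' property of the right-hand side and a careful check that the oblique boundary condition kills the boundary term $\nu\lh d\rho$, that $\rho$ obeys a parabolic differential inequality with zero parabolic boundary data; the maximum principle then forces $\rho\equiv 0$. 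Without some argument of this type, your short-time existence claim in (i) is incomplete.

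\textbf{Boundary $\eps$-regularity.} You flag the oblique condition as your principal obstacle and state that naive reflection will not reduce it to an interior estimate. The paper shows this is only half the story: set $\tilde v = g_1 v g_2$ with $g_1=\exp(-rA(\nu))$, $g_2=\exp\bigl(-r(a(\nu)-A(\nu))\bigr)$, where $r$ is the distance to the flat boundary piece. Then $\tilde v$ satisfies a \emph{homogeneous Neumann} condition (the normal derivatives of $g_1,g_2$ absorb the affine terms in \eqref{eq:loc_bdy}), after which even reflection does reduce to the interior case. Your proposal neither finds this reduction nor offers an alternative route through the oblique Schauder/Moser machinery, so the boundary half of the $\eps$-regularity is unproved in your scheme. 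Relatedly, the paper also proves a time-slice $\eps$-regularity (Prop.~4.1) by combining the reverse local energy inequality with the parabolic one; this is what lets the blow-up be carried out along an \emph{arbitrary} sequence $t_k\nearrow T_1$, which your proposal asserts but does not justify.

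\textbf{No boundary bubbles.} For (iii) you note that a blow-up at $\pt\Sigma$ might produce a bubble on a half-plane, and you speculate that the rescaled oblique condition should permit an extension. The paper makes this precise: under the rescaling $\lambda_i\to0$ the boundary condition degenerates to $\pt_\nu w_\infty^0=0$, so $w_\infty^0$ extends by even reflection to a finite-energy harmonic map on $\R^2$, hence to a harmonic $S^2\to G$. But that is not yet enough: one must also rule out that the original limit was only a half-bubble. The paper's argument is that the Hopf (holomorphic quadratic) differential of $w_\infty^0$ vanishes, so $w_\infty^0$ is weakly conformal; since the normal derivative vanishes along the equator, conformality forces the tangential derivative to vanish there too, hence $w_\infty^0$ is constant on the equator, hence constant by the boundary version of the unique continuation/uniqueness theorem for harmonic maps. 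Your proposal does not supply this conformality argument, so it does not actually exclude boundary bubbles.

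Aside from these gaps, your choice of Bochner/Moser iteration for the small-energy estimate is a workable alternative to the paper's $L^p$-plus-Gagliardo--Nirenberg route, and your energy accounting for (iv) matches the paper's.
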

If $T_1$ is finite, then $S(T_1)$ (being smooth away from finitely many points) has bounded $\eng$ energy, or equivalently bounded $W^{1,2}$ norm. As in the theory of the harmonic map flow (see \cite{Struwe1985evolution}*{p.~576} and \cite{Ma1991Harmonic}*{p.~295}), one can apply an approximation theorem due to \cite{SchoenUhlenbeck1982regularity}*{Lem.~3.2} to find a sequence of smooth initial data $\set{S_k}$, from which a sequence of flow solutions is known and converges to a flow solution starting from $S(T_1)$.  Here, we proceed in a slightly different way. In an attempt to understand the singularity of $u(T_1)$ (the weak limit of the harmonic map flow at the first blow-up time), Qing \cite{Qing2003remark}*{Thm.~1.1} proved some refined estimates around the singular point (see also Prop.~\ref{prop:osc_estimats}). Although $u(T_1)$ is not continuous in general (by an example constructed by Topping \cite{Topping2004Winding}*{Thm.~1.14}), this estimate allows us to construct an explicit approximation of $S(T_1)$ by using a cut-off function.

\begin{mlem}[Approximation Lemma]\label{mthm:B}
	For $S$, $T_1$, $\set{x_i}_{i=1}^N$ (blowup points) as in Theorem~\ref{mthm:A} and any $\sigma>0$, there exists a smooth $\sigma$-approximation of $S(T_1)$. That is, there exist $\delta_i>0$ and a gauge transformation $\tilde S$, such that $S(T_1)=\tilde S$ in $\Sigma\setminus\bigcup_{i=1}^N B_{\delta_i}(x_i)$ and
	\begin{equation*}
		\abs{\eng(S(T_1))-\eng(\tilde S)}<  \sigma,\quad
		\nu\lh(\tilde S^*(A)-A_0)=0.
	\end{equation*}
\end{mlem}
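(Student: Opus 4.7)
The plan is to perform a local surgery in a small disk around each blowup point $x_i$, leaving $S(T_1)$ untouched outside $\bigcup_i B_{\delta_i}(x_i)$. Since the blowup set is finite it suffices to describe the construction at a single $x_i$. In a normal coordinate chart and a smooth local trivialization of $E$, write $S(T_1)$ as a map $u\colon B_{2\delta}(x_i)\to G$, which is smooth away from $x_i$ and lies in $W^{1,2}$. The central obstruction is that $u$ need not be continuous at $x_i$ (Topping's example), so one cannot simply set $\tilde u(x_i)$ to be a pointwise value; this is precisely where Qing's refined oscillation estimate (Prop.~\ref{prop:osc_estimats}) enters.

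First I would use Qing's oscillation control together with a Fubini slicing over annular shells to select, for a preset small $\eta>0$, a radius $r_i\in[\delta,2\delta]$ with $\osc_{\pt B_{r_i}(x_i)}(u)<\eta$ and $\int_{\pt B_{r_i}(x_i)}|\pt_\tau u|^2\,d\tau$ small. Fix $y_0\in\pt B_{r_i}(x_i)$ and set $g_i\eqdef u(y_0)\in G$. On $\pt B_{r_i}(x_i)$ the quantity $g_i^{-1}u$ stays in a normal neighborhood of the identity, so $X\eqdef\log(g_i^{-1}u)\in\g$ is well-defined there. Choose a radial cutoff $\phi$ on $B_{r_i}$ with $\phi\equiv 1$ on $B_{r_i}\setminus B_{r_i/2}$, $\phi\equiv 0$ on $B_{r_i/4}$ and $|\nabla\phi|\le C/r_i$, and define
\[
	\tilde u(x)\eqdef g_i\cdot\exp\bigl(\phi(x)\,\log(g_i^{-1}u(x))\bigr)
\]
on $B_{r_i}$, with $\tilde u=u$ outside. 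By construction $\tilde u$ agrees with $u$ on $\pt B_{r_i}$, equals the constant $g_i$ on $B_{r_i/4}$, and is smooth across $x_i$.

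For the energy comparison, since $S(T_1)\in W^{1,2}$, absolute continuity of the Lebesgue integral yields $\eng(S(T_1);B_{r_i})\to 0$ as $\delta\to 0$. For $\tilde u$, the piece on $B_{r_i/4}$ reduces to the constant-map contribution $\int_{B_{r_i/4}}|g_i^{-1}\Omega g_i-\Omega_0|^2=O(r_i^2)$, and on the annulus one uses $|d\exp|$ bounded near $0$ together with $|X|<\eta$ to estimate
\[
	|d\tilde u|^2\le C\bigl(|du|^2+|\nabla\phi|^2\,|X|^2\bigr),
\]
whose integral is at most $C\bigl(\int_{B_{r_i}}|\nabla u|^2+\eta^2\bigr)$. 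Taking $\delta$ and $\eta$ small makes each of the $N$ local contributions less than $\sigma/(2N)$, giving the energy bound.

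The hardest step is enforcing $\nu\lh(\tilde S^*(A)-A_0)=0$ when $x_i\in\pt\Sigma$. For interior $x_i$ one shrinks $\delta_i$ so that $B_{\delta_i}(x_i)\cap\pt\Sigma=\emptyset$, and the boundary condition is inherited from $S(T_1)$. For boundary $x_i$ I would switch to Fermi coordinates straightening $\pt\Sigma$, apply a half-disk version of Qing's oscillation estimate (available because $S(T_1)$ is smooth on $\pt\Sigma\setminus\{x_i\}$), and choose $y_0\in\pt\Sigma\cap\pt B_{r_i}(x_i)$ so that the algebraic constraint $\nu\lh(g_i^{-1}\Omega g_i-\Omega_0)=0$ holds at $y_0$. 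Using a radial cutoff respecting reflection across $\pt\Sigma$, the construction produces $\tilde u$ meeting the boundary condition along $\pt\Sigma$ up to an $O(r_i)$ error, which can be removed by left-multiplying with a small boundary-fitting gauge transformation of $C^1$-size $O(r_i)$ supported in $B_{r_i/2}$; its energy cost is $O(r_i^2)$ and is absorbed in $\sigma$. The interior step is essentially a Lie-group adaptation of Qing's cutoff scheme; the boundary step is where the real technical difficulty lies.
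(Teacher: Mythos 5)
Your interior construction is essentially the paper's: pick a base value $u^0=u(x^0)$ on a small annulus, use Prop.~\ref{prop:osc_estimats} to guarantee $u$ stays in a normal ball around $u^0$, and set $\tilde u=\exp_{u^0}(\phi\,\exp_{u^0}^{-1}u)$ for a radial cutoff $\phi$. The energy bookkeeping you sketch matches what the paper does, and the Fubini slice over annuli is an acceptable variant of invoking the oscillation estimate directly on an annulus.

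The boundary case, however, has a genuine gap precisely where you flag the difficulty. Two problems. First, the claim that one can choose $y_0\in\pt\Sigma$ so that $\nu\lh\bigl(g_i^{-1}\Omega g_i-\Omega_0\bigr)(y_0)=0$ does not follow from the boundary condition on $u$: at $y_0\in\pt\Sigma$, $u$ satisfies $\pt_\nu u+[A(\nu),u]+u\,a(\nu)=0$, and since $\pt_\nu u(y_0)\neq 0$ in general, this does not give $[A(\nu),g_i]+g_i\,a(\nu)=0$. Second, and more seriously, after your surgery $\tilde u\equiv g_i$ on a whole boundary segment $\pt\Sigma\cap B_{r_i/4}(x_i)$, so the oblique condition with a \emph{constant} value would have to hold identically there; but $A(\nu)$ and $a(\nu)$ vary along $\pt\Sigma$, so no single constant $g_i$ can satisfy it. Fixing this with a ``small boundary-fitting gauge transformation of $C^1$-size $O(r_i)$'' is exactly the hard step, because that correction must itself solve a first-order oblique boundary constraint; it cannot be dismissed by size estimates alone. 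The paper resolves this by an explicit second stage: after the interior-style cutoff produces $w$, it defines $q(y,r)=\exp_{w(y,0)}\bigl(r\,V(w(y,0))\bigr)$ with $V(u)=[A(\nu),u]+u\,a(\nu)$, which agrees with $w$ on $\pt\Sigma$ and satisfies $\pt_\nu q=-V(q)$ there \emph{exactly} by construction, and then interpolates between $q$ and $w$ inside a thin collar $\Sigma_{2\delta_0'}$ using a cutoff $\xi\varphi$, tracking both the boundary condition (which survives the interpolation since $\pt_r(\xi\varphi)=0$ on the relevant boundary piece) and the energy cost (made small by taking $\delta_0'\ll\delta_0$). Your proposal does not contain this construction, and without it the exhibited $\tilde S$ does not satisfy $\nu\lh(\tilde S^*(A)-A_0)=0$.
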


We can restart the flow from $\tilde S$ by using Theorem~\ref{mthm:A} until we meet the next singular time and repeat the above argument to get a piecewise smooth solution for $t\in [0,\infty)$. The modification will stop after finite steps, since if one takes $\sigma$ smaller than the smallest energy of harmonic $S^2$ in $G$, the energy along the piecewise smooth flow drops by a fixed amount after each singular time. The limit $t\to \infty$ (after some possible blow-ups) of the flow will be a solution to \eqref{eq:meq} and due to the removable of singularity theorem (see Lem.~\ref{lem:rmv_sing} and Cor.~\ref{cor:rmv_bdy_sing}), it is smooth on $\Sigma$ including the boundary. We summarize this in the next theorem. 
\begin{mthm}\label{mthm:C}
	Let $\eps_0$ be the constant in $\eps$-regularity (see Thm.~\ref{thm:epsreg}). Then for any $\sigma< \eps_0$, there exists $S(x,t)$ which is piecewise smooth on $K+1$ intervals $I_k=[T_k,T_{k+1})$, $T_0=0$, $T_{K+1}=\infty$ and solves \eqref{eq:mflw} piecewise, 
	\[\begin{cases}
			S^{-1}\vppt{S}{t}=-\nabla^*_{S^*(A)}(S^*(A)-A_0),&(x,t)\in\Sigma\times I_k\\
			S(x,T_k)=S_k(x),&x\in\Sigma,\\
			\nu\lh(S^*(A)-A_0)=0,&(x,t)\in\pt\Sigma\times I_k,
	\end{cases}\]		
	where $S_k\in C^\infty(\Gamma(\Aut_G E))$, $S_0$ is given in Theorem \ref{mthm:A}, and for $1\leq k< K$, $S_k$ is a smooth $\sigma$-approximation of the weak limit $S(T_k)\eqdef S(\cdot,T_k)$ as described in Lemma~\ref{mthm:B}. Moreover, for a blow-up sequence $\set{t_i}\nearrow T_k$,
	\begin{eq}\label{eq:energy_mono_desc_approximate}
		\lim_{t_i\to T_k}\eng(S_{k-1}(\cdot,t_i);\Sigma)\geq\eng(S_k(\cdot,T_k);\Sigma)
		+N(T_k)(\eps_0-\sigma),
	\end{eq}
	where $N(T_k)$ is the number of blowup points at time $T_k$. In particular, the energy of generalized solution is monotonically decreasing and so as $t\to\infty$, one obtains the weak limit $S_\infty$ of $S(\cdot,t)$, which is smooth on $\Sigma$ and solves \eqref{eq:meq}.
\end{mthm}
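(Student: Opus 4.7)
The plan is to build $S$ inductively, running the flow on each subinterval via Theorem~\ref{mthm:A} and patching with Lemma~\ref{mthm:B} at every singular time; the crux is a quantitative energy drop that forces only finitely many restarts.

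First I would start the flow at $S_0$ and apply Theorem~\ref{mthm:A} to get a smooth solution on a maximal interval $[0,T_1)$. If $T_1=\infty$ we set $K=0$; otherwise parts \eqref{lab:2_mthm:A}--\eqref{lab:4_mthm:A} of Theorem~\ref{mthm:A} provide a weak limit $S(T_1)\in W^{1,2}$, finitely many blowup points $\{x_i\}_{i=1}^{N(T_1)}$, and non-trivial harmonic bubbles $\omega_i\colon\R^2\to G$. By the removable-singularity result (Lem.~\ref{lem:rmv_sing}) each $\omega_i$ extends to a non-constant harmonic sphere in $G$, so by $\eps$-regularity (Thm.~\ref{thm:epsreg}) its energy is at least $\eps_0$. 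Combining the energy inequality \eqref{lab:4_mthm:A} with the $\sigma$-closeness in Lemma~\ref{mthm:B} yields a smooth $S_1\in C^\infty(\Gamma(\Aut_G E))$ satisfying \eqref{eq:compatibility} and
\[
\eng(S_1)\leq \eng(S(T_1))+\sigma \leq \lim_{t_i\nearrow T_1}\eng(S(\cdot,t_i))-N(T_1)\eps_0+\sigma,
\]
which is exactly \eqref{eq:energy_mono_desc_approximate}.

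Next I would iterate: restart the flow from $S_1$ using Theorem~\ref{mthm:A} (the compatibility hypothesis is preserved by Lemma~\ref{mthm:B}) and repeat. Because $\sigma<\eps_0$, every singular time strictly decreases $\eng$ by at least $\eps_0-\sigma>0$, while $\eng\geq 0$ throughout, so at most $\lfloor\eng(S_0)/(\eps_0-\sigma)\rfloor$ singular times can occur. Hence there is a finite $K$ after which the flow exists smoothly on $[T_K,\infty)$, establishing the piecewise-smooth structure.

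For the long-time limit I would use the standard energy identity $\tfrac{d}{dt}\eng(S)=-\int_\Sigma|S^{-1}\pt_tS|^2$ (the boundary term vanishes by the oblique condition $\nu\lh(S^*(A)-A_0)=0$ combined with \eqref{eq:conj_star_diff}-type manipulation). Monotonicity of $\eng$ gives a sequence $t_j\to\infty$ along which $\|S^{-1}\pt_tS(\cdot,t_j)\|_{L^2}\to 0$. Running the blow-up analysis of Theorem~\ref{mthm:A} on this sequence yields a $W^{1,2}$ weak limit $S_\infty$ that solves \eqref{eq:meq} distributionally off a finite singular set; the interior and boundary removable-singularity results (Lem.~\ref{lem:rmv_sing} and Cor.~\ref{cor:rmv_bdy_sing}) then promote $S_\infty$ to a smooth solution on all of $\Sigma$, including $\pt\Sigma$.

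The main obstacle I anticipate is verifying that the iteration really closes up: Lemma~\ref{mthm:B} must deliver $S_1$ that is not merely smooth but also satisfies the oblique compatibility condition so Theorem~\ref{mthm:A} can be reapplied, and the energy bookkeeping has to absorb both the bubble loss and the $+\sigma$ approximation error without ambiguity. A secondary subtlety is the $t\to\infty$ step: extracting from $S^{-1}\pt_tS\to 0$ in $L^2$ a genuine weak solution of \eqref{eq:meq} requires the same concentration-compactness machinery used at the first blow-up time, now also along the boundary, which is where the oblique-boundary removable-singularity corollary becomes essential.
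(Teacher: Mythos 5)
Your overall architecture — run Theorem~\ref{mthm:A} to the first singular time, patch with Lemma~\ref{mthm:B}, restart, count singular times by a quantitative energy drop, and pass to $t\to\infty$ via $\int_{T_K}^\infty\int_\Sigma|\pt_tS|^2<\infty$ plus the removable-singularity results — is exactly the paper's. The finiteness argument, the preservation of the compatibility condition by Lemma~\ref{mthm:B}, and the long-time limit are all handled correctly.

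There is, however, a genuine gap in your derivation of \eqref{eq:energy_mono_desc_approximate}. You obtain the drop by combining the energy inequality \eqref{lab:4_mthm:A} with the assertion that each bubble satisfies $\frac12\int_{\R^2}|\nabla\omega_i|^2\geq\eps_0$, ``by $\eps$-regularity.'' That lower bound is not established anywhere with this particular constant: $\eps_0$ is the threshold in the \emph{parabolic} $\eps$-regularity (Thm.~\ref{thm:epsreg}), whereas the only universal lower bound on the energy of a non-constant harmonic sphere in $G$ comes from the Sacks--Uhlenbeck gap for harmonic maps, a priori a different constant. Moreover, the bubbles here are produced by rescaling with $\lambda_i^{-1}=\max|\nabla u(\cdot,t_i)|$, which normalizes $|\nabla\omega_i(0)|=1$ but gives no control from below on $\int|\nabla\omega_i|^2$ in terms of $\eps_0$; and \eqref{lab:4_mthm:A} is only an inequality, so part of the concentrating energy may be lost in necks rather than captured by the single bubble extracted at each point. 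The paper avoids this entirely: by the very definition of a singular point, $\lim_{t_i\nearrow T_1}\eng(S(\cdot,t_i);D_{\delta'}(x_0))\geq\eps_0$ for every small $\delta'$, and combining this with the smooth convergence on the annulus $D_{2\delta_0}\setminus D_{\delta'}$ and the local estimate \eqref{eq:energy_diff_loc} of the approximation lemma yields the drop $\eps_0-\sigma$ per singular point directly, with no reference to bubble energies. You should replace your bubble-energy step by this argument (or else prove that $\eps_0$ can be taken no larger than the harmonic-sphere energy gap, which the paper does not do).
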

Note that the piecewise smooth solution given above still depends on a parameter $\sigma$. We expect that if $\sigma$ goes to zero, the piecewise smooth solutions converge to a global weak solution in the sense of Struwe \cite{Struwe1985evolution}*{Prop.~4.2} because $S_k$ is just one particular way of approximating $S(T_k)$ in $W^{1,2}$ norm.

As a byproduct of the above discussion, we obtain another proof of Lemma A.3 of Rivi\`ere \cite{Riviere2007Conservation}. The original proof used the method of continuity under a smallness assumption and a variational proof was obtained by  Schikorra~\cite{Schikorra2010remark}*{Thm.~2.1}, Fr\"ohlich \& M\"uller~\cite{FrohlichMuller2011existence}*{Prop.~2}.
\begin{mcor}[see also {\cite[Lem.~A.3]{Riviere2007Conservation}}]\label{mthm:D}
	Let $E$ be a vector bundle with structure group $\SO(m)$ over disc $B\subset\R^2$, then for every connection 1-form $\Omega\in L^2(B,\so(m)\otimes\wedge^1\R^2)$, one can find $\xi\in W^{1,2}(B,\so(m))$ and $S\in W^{1,2}(B,\SO(m))$ such that
	\begin{eq}\label{eq:riviere}
		\begin{cases}
			\nabla^\perp\xi=S^{-1}\nabla S+S^{-1}\Omega S, & x\in B, \\
			\xi=0, & x\in\pt B.
		\end{cases}
	\end{eq}
	Moreover, there exists some constant $C(m)$, such that
	\[
		\|\xi\|_{1,2;B}+\|\nabla S\|_{2;B}\leq C(m)\|\Omega\|_{2;B}.
	\]
\end{mcor}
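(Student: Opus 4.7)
The plan is to invoke Theorem~\ref{mthm:C} in the local setting $E = B \times \R^m$, $G = \SO(m)$, $A_0 = d$, $A = d + \Omega$, starting from the trivial initial gauge $S_0 \equiv \id$, and to read off Corollary~\ref{mthm:D} from the smooth Coulomb gauge produced at $t = \infty$. Since Theorem~\ref{mthm:C} requires smooth data satisfying \eqref{eq:compatibility}, we first approximate: extend $\Omega$ by zero to a slightly larger disc, mollify at scale $\eps_n \to 0$, and multiply by a smooth cutoff supported in $B_{1 - 1/n}$, producing $\Omega_n \in C^\infty(\overline{B}, \so(m) \otimes \wedge^1\R^2)$ with $\Omega_n \to \Omega$ in $L^2(B)$ and $\Omega_n \equiv 0$ near $\partial B$. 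With this choice, $\nu \lh (S_0^*(d+\Omega_n) - d) = \nu \lh \Omega_n \equiv 0$ on $\partial B$, so \eqref{eq:compatibility} holds.

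Applying Theorem~\ref{mthm:C} to each approximate datum produces a long-time limit $S_n \in C^\infty(\overline{B}, \SO(m))$ that solves \eqref{eq:meq} and satisfies $\nu \lh \alpha_n = 0$ on $\partial B$, where $\alpha_n \eqdef S_n^{-1}dS_n + S_n^{-1}\Omega_n S_n$; the identity \eqref{eq:gauge_fixing} moreover gives $d^*\alpha_n = 0$. The monotonicity of $\eng$ along the piecewise smooth flow (strict decrease on each smooth interval together with \eqref{eq:energy_mono_desc_approximate} at singular times) delivers the clean a-priori bound
\[
	\tfrac{1}{2}\|\alpha_n\|_{L^2(B)}^2 = \eng(S_n) \leq \eng(\id) = \tfrac{1}{2}\|\Omega_n\|_{L^2(B)}^2,
\]
with no smallness hypothesis. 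Since $B$ is simply connected, the co-closed $\so(m)$-valued $1$-form $\alpha_n$ with vanishing normal trace admits a Hodge-type decomposition $\alpha_n = \nabla^\perp \xi_n$ for a smooth $\xi_n$; the boundary relation $\nu \lh \nabla^\perp \xi_n = \partial_\tau \xi_n = 0$ along the connected $\partial B$ allows the normalization $\xi_n|_{\partial B} = 0$. Hence $(\xi_n, S_n)$ solves \eqref{eq:riviere} for $\Omega_n$, and combining the Poincaré inequality for $\xi_n$ with $|dS_n| \leq |\alpha_n| + |\Omega_n|$ (using $|S_n| \leq \sqrt{m}$) gives
\[
	\|\xi_n\|_{1,2;B} + \|\nabla S_n\|_{2;B} \leq C(m) \|\Omega_n\|_{2;B}.
\]

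It remains to pass to the limit. The uniform bounds together with $|S_n| \leq \sqrt{m}$ and Rellich--Kondrachov yield, along a subsequence, $\xi_n \weakto \xi$ and $S_n \weakto S$ in $W^{1,2}(B)$, $S_n \to S$ strongly in every $L^p(B)$ with $p < \infty$, and $S_n \to S$ a.e., so $S \in \SO(m)$ a.e. Strong--weak pairing yields $S_n^{-1}\nabla S_n \weakto S^{-1}\nabla S$ in $L^2$; splitting $S_n^{-1}\Omega_n S_n - S^{-1}\Omega S$ into three pieces containing $\Omega_n - \Omega$ and $S_n - S$ and invoking dominated convergence gives $S_n^{-1}\Omega_n S_n \to S^{-1}\Omega S$ in $L^2$. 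Thus \eqref{eq:riviere} holds in the limit, the trace condition $\xi|_{\partial B} = 0$ survives by continuity of the trace operator, and weak lower semicontinuity delivers the stated bound. We expect the main technical obstacle to be the uniform energy bound; the point of starting the flow from $S_0 \equiv \id$—made compatible by the approximation—is precisely to convert $\eng(S_0) = \tfrac{1}{2}\|\Omega_n\|_{2;B}^2$ into direct control on $\eng(S_n)$ via energy monotonicity, dispensing with any smallness assumption on $\|\Omega\|_2$.
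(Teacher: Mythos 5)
Your argument is correct and follows the same overall strategy as the paper: run the generalized flow of Theorem~\ref{mthm:C} with $A_0=d$, $A=d+\Omega$, extract the a priori bound from energy monotonicity, recover $\xi$ from $d^*\tilde\Omega=0$ via the Poincar\'e lemma together with the vanishing normal trace, and pass to the limit over a smooth approximating sequence with weak compactness and strong--weak pairing. The one place you genuinely diverge is the treatment of the compatibility condition \eqref{eq:compatibility}. The paper keeps the approximating connections $\Omega_k$ arbitrary smooth (requiring only $\|\Omega_k\|_2\leq\|\Omega\|_2$) and instead constructs a boundary-adapted initial gauge $S_0=\exp\{(1-\eta)r\Omega_\nu\}$ on a collar; it must then verify $\|dS_0\|_2\leq\eps$ by shrinking the collar, arriving at $2\eng(S_0)\leq\|\Omega\|_2^2+\eps^2$ and carrying the $\eps$ through the final estimates. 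You push the difficulty into the approximation of $\Omega$ itself: by cutting $\Omega_n$ off near $\partial B$ you make the trivial gauge $S_0\equiv\id$ compatible and obtain the exact identity $\eng(\id)=\frac{1}{2}\|\Omega_n\|_{2}^2$, dispensing with the collar construction and the $\eps$ bookkeeping entirely. This is a genuine simplification, and the cost is negligible here since an $L^2$ approximation of $\Omega$ is needed in either argument; the paper's more elaborate initial gauge would only be indispensable if one insisted on flowing from a connection that is not modified near the boundary. Both proofs rely equally on the assertion that the $t\to\infty$ limit of the generalized solution retains the oblique condition $\nu\lh(S^*(A)-A_0)=0$ on $\partial B$, so no gap is introduced relative to the paper on that point.
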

The paper is organized as follows. In \autoref{sec:pre}, we present some basic facts and notations, then we show that problem \eqref{eq:mflw} is the negative gradient flow of $\eng(S)$ and that the short time existence follows from the maximum principle and the standard parabolic theory. After that, we show three classical but important lemmas, which are \emph{local energy inequality}, \emph{$\eps$-regularity} and \emph{removable of singularity} in \autoref{sec:key_lemmas}. In \autoref{sec:blw}, we prove that the possible bubbles at singular points are harmonic spheres into the Lie group $G$ and the \emph{boundary bubbles}  do not exist. In \autoref{sec:gen_sol}, an oscillation estimate (Prop.~\ref{prop:osc_estimats}) is proved and applied to the construction of the delicate approximation preserving the boundary condition in Lemma~\ref{mthm:B}, from which the existence of generalized solution (Theorem~\ref{mthm:C}) follows. Finally, in \autoref{sec:app}, we show that Corollary~\ref{mthm:D} can be proved by the heat flow method.
\section{Preliminary}
\label{sec:pre}
In this section, we first review some basic definitions related to connections and bundles. The main purpose is to introduce the notations we used in this paper. Then we prove that \eqref{eq:mflw} is indeed the negative gradient flow of $\eng(S)$ and show that the boundary condition in \eqref{eq:mflw} arises naturally in the first variation of $\eng(S)$. In the last subsection, we prove the local existence of the flow.
\subsection{Notations}
We basically follow the notations of \cite{DonaldsonKronheimer1990geometry, FreedUhlenbeck1991Instantons, Lawson1985gauge}. Let $G$ be a compact Lie group, which is considered as a subgroup $G\subset \SO(r)$ by Peter-Weyl theorem, and $E$ be a real metric vector $G$-bundle of rank $r$ on a Riemannian manifold $M^m$. For an open cover $\set{U_\ga}_{\ga\in\Lambda}$ of $M$, suppose that $\Phi_\ga\mathpunct{:}E|_{U_\ga}\to U_\ga\times\R^r$ is a local trivialization and the transition functions $\phi_{\ga\gb}\mathpunct{:}U_\ga\cap U_\gb\to G$ are defined by
\begin{align*}
	\Phi_\gb\comp\Phi_\ga^{-1}\mathpunct{:}(U_\ga\cap U_\gb)\times\R^r & \to(U_\ga\cap U_\gb)\times\R^r \\
	(x,\xi_\ga) & \mapsto(x, \phi_{\gb\ga}(x)\xi_\ga).
\end{align*}

A \emph{covariant derivative} on $E$ is a first order differential operator
\[
	\nabla\mathpunct{:} \Gamma(E)\to \Gamma(E\otimes T^*M),
\]
which satisfies the Leibnitz rule
\[
	\nabla(f\sigma)=\sigma\otimes\rd f+f\nabla\sigma,\quad\forall f\in C^\infty(M), \sigma\in\Gamma(E),
\]
and is compatible with the metric on $E$
\[
	\rd\inner{\sigma,\tau}=\inner{\nabla\sigma,\tau}+\inner{\sigma,\nabla\tau},\quad\sigma,\tau\in\Gamma(E).
\]

Such a covariant derivative is in one to one correspondence with a \emph{$G$-connection} $A$, which is given (with a fixed trivialization) by a family of $\g$ (Lie algebra of $G$) valued 1-form $A_\ga$ on $U_\ga$ satisfying the following relation on overlaps
\begin{equation*}
	A_\gb=\phi^{-1}_{\ga\gb}d\phi_{\ga\gb}+\phi^{-1}_{\ga\gb}A_\ga\phi_{\ga\gb}.
\end{equation*}
More precisely, given 1-forms $A_\ga$ as above, one can define a covariant derivative by
\[
	(\nabla\sigma)|_{U_\ga}=\rd(\sigma|_{U\ga})+A_\ga\sigma|_{U_\ga}.
\]
We write $\nabla$ as $\nabla_A$ to emphasize its relation to the connection $A$ if necessary and call $\nabla$ a connection.

The difference of any two connections lies in the linear space $\Omega^1(\g_E)\eqdef\Gamma(\g_E\otimes T^*M)$, where $\g_E$ is the associate bundle of $E$ with typical fiber $\g$ and adjoint representation (for the construction of associated bundles, see for example \cite{Michor2008in}*{p.~215, sec.~18.7}). Denote the space of all $G$-connections by $\CA$, which is an affine space.

The covariant derivative $\nabla$ on $E$ can be extended to be an \emph{exterior covariant differential} operator $D$ on $\Omega^k(E)\eqdef\Gamma(E\otimes\wedge^k(T^*M))$ by the rule
\[
	D(\sigma\otimes\omega)=\nabla\sigma\wedge\omega+\sigma\otimes d\omega,\quad\sigma\in\Gamma(E),\omega\in\Omega^k\eqdef\Gamma(\wedge^k(T^*M)).
\]
This extension is independent of the Riemannian metric on $M$, since the exterior differential operator $\rd$ is independent of metric.

The \emph{formal adjoint operator} of $D$ is defined by $D^*(\sigma\otimes\omega)\eqdef(-1)^{mk+1}*D*(\sigma\otimes\omega)$, for $\sigma\in\Gamma(E)$ and $\omega\in\Omega^{k}$, where $*$ is the \emph{Hodge star operator}. The \emph{global inner product} on $\Omega^k(E)$ is defined as
\[
	\bpar{\sigma\otimes\omega,\sigma'\otimes\omega'}
	=\int_M\inner{\sigma\otimes\omega,\sigma'\otimes\omega'}*1
	=\int_M\inner{\sigma,\sigma'}\inner{\omega,\omega'}*1
	=\int_M\inner{\sigma,\sigma'}\omega\wedge*\omega'.
\]
An important relation of $D$ and $D^*$ is the integration by parts formula. We will need the following special case, for $\sigma,\sigma'\in\Gamma(E)$ and $\omega\in\Omega^1$,
\begin{eq}\label{eq:intgrate_by_part}
	\int_{\pt M} \inner{\sigma,\nu\lh(\sigma'\otimes\omega)}i_\nu(*1)
	=\bpar{D\sigma,\sigma'\otimes\omega} -\bpar{\sigma,D^*(\sigma'\otimes\omega)},
\end{eq}
where $\nu$ is the \emph{exterior} unit normal vector on $\pt M$ and $i_\nu(*1)$ is the volume form on $\pt M$ with respect to the induced orientation. Hereafter, we write $D$ as $\nabla$ and $D^*$ as $\nabla^*$.

By the definition of $\nabla^*$, one can show that \eqref{eq:gauge_fixing} follows from \eqref{eq:meq}. In fact,
\begin{eq}\label{eq:conj_star_diff}
	\nabla_{A+a}^*b=\nabla_A^*b+\set{a,b},\quad\forall a,b\in\Omega^1(E),
\end{eq}
where $A$ is a connection on $E$ and the bilinear form $\set{a,b}$ is the tensor product of the Lie bracket on $\g_E$ and the Riemannian metric on $\Omega^1$. In particular, $\set{a,a}=0$.

Denote the fiber bundle of gauge transformations by $\Aut_G E$, which is the associated bundle of $E$ with typical fiber $G$ and the conjugate action. The gauge group is the group formed by all the sections of $\Aut_GE$. There is a pointwise exponential map $\exp\mathpunct{:}\Gamma(\g_E)\to\Gamma(\Aut_GE)$. $\g_E$ and $\Aut_GE$ are metric bundles with metric induced from $E$. There is also an induced $G$-connection on $\g_E$ and $\Aut_GE$ given by the rule
\[
	(\nabla T)(\sigma)=\nabla(T(\sigma))-(-1)^kT\wedge \nabla \sigma,\quad\sigma\in\Gamma(E),T\in\Omega^k(\End E),
\]
where we regard $S\in\Gamma(\Aut_GE)$ as a section of the vector bundle $\End E$. $\g_E$ is the infinitesimal automorphism bundle of $\Aut_GE$, which is preserved by the covariant differential operator $\nabla$ on $\End E$ \cite{Lawson1985gauge}*{p.~21ff}.

A gauge transformation $S\in\Gamma(\Aut_GE)$ acts on a connection $A$ by ``pull back'':
\[
	\nabla_{S^*(A)}(\cdot)=S^{-1}\comp\nabla_A(S(\cdot)),
\]
i.e.,
\begin{eq}\label{eq:pullback}
	S^*(A)=A+S^{-1}\nabla_AS.
\end{eq}

We will need the local form of \eqref{eq:meq} to do analysis. In local computation, one can either use moving frame or coordinate basis, and the Einstein's summation convention is always assumed. Let $\set{e_i}$ be an orthonormal local frame of $M$ and $\set{\omega^j}$ be the dual co-frame. Recall, by the definition of $\nabla^*$, one can show that $\nabla^*a=-(\nabla_{e_j}a)(e_j)$ for any $a\in\Omega^1(\g_E)$ (see, for example \cite{Lawson1985gauge}*{p.~21, (3.6)}). Note also the relation given by \eqref{eq:conj_star_diff}, it's easy to rewrite \eqref{eq:meq} as
\begin{eq}\label{eq:meq_global}
	-\nabla_{A}^*\nabla_{A}S=\nabla_{A;e_j}SS^{-1}\nabla_{A;e_j}S +S\set{S^{-1}\nabla_{A}S,a}+S\nabla_{A}^*a,
\end{eq}%
where $a=A-A_0\in\Omega^1(\g_E)$.

To write down \eqref{eq:meq_global} exactly in local coordinate systems, suppose $\set{\mu_\ga}$ is a local frame of $E$ and $\pt_i$ is a local coordinate basis of $M$, then locally,
\[
	\nabla_{A; \pt_i}=\pt_i+A_i,\quad\text{or equivalently }\nabla_A=\rd+A,
\]
where $A_i$ are matrices given by $\nabla_{A;\pt_i}\mu_\gb\eqdef A_{i\gb}^\ga\mu_\ga$. With the help of $\set{\mu_\ga}$, $S|_U$ becomes a matrix-valued function $u$ with $u(x)\in G\subset \SO(r)\subset \R^{r\times r}$. We introduce the notation $u_{|k}$ as the local expression of $\xkf{\nabla_{A;\pt_k}S}|_U$
\[
	u_{|k}\eqdef\pt_ku+[A_k,u].
\]
Recall that for a function $f$ on $(M,g)$, the Hessian and Laplace of $f$ are given by
\[
	(\nabla^2 f)_{kl}=\pt_k\pt_lf-\Gamma_{kl}^i\pt_if\text{ and }
	\Delta_g f=\tr_g(\nabla^2f)=g^{kl}(\nabla^2f)_{kl},
\]
respectively. Now, we expand \eqref{eq:meq_global} term by term. Firstly,
\begin{align*}
	-\nabla_A^*\nabla_A S & =-\nabla_A^*\left(\nabla_{A;\pt_i}S\otimes\rd x^i\right) \\
	& =g^{kl}\left\{\nabla_{A;\pt_k}\left(\nabla_{A;\pt_i}S\otimes\rd x^i\right)\right\}(\pt_l) \\
	& =g^{kl}\left\{\nabla_{A;\pt_k}\nabla_{A;\pt_l}S-\Gamma_{kl}^i\nabla_{A;\pt_i}S\right\},\\
\end{align*}
thus the local expression for $-\nabla_A^*\nabla_AS$ is
\begin{multline*}
	g^{kl}\left\{\nabla_{A;\pt_k}\left(\pt_l u+[A_l,u]\right)-\Gamma_{kl}^i\left( \pt_i u+[A_i,u]\right)
\right\}\\
=\Delta_g u+g^{kl}\left\{\pt_k[A_l,u]+[A_k,u_{|l}]-\Gamma_{kl}^i[A_i,u]\right\}.
\end{multline*}
Clearly, the local expression for $g^{kl}\nabla_{A;\pt_k}SS^{-1}\nabla_{A;\pt_l}S$ is $g^{kl}u_{|k}u^{-1}u_{|l}$. By the definition of the bilinear form $\set{\cdot,\cdot}$ ( see \eqref{eq:conj_star_diff}) and the adjoint operator $\nabla^*$, one can show that the local expression of $\set{S^{-1}\nabla_AS,a}$ is $-g^{kl}\left[u^{-1}u_{|k},a_l\right]$
and
\[
	\nabla_{A}^*a=\rd^*a-g^{kl}[A_k,a_l]
	=-g^{kl}\left(\pt_la_k-\Gamma_{kl}^ia_i+[A_k,a_l]\right)
	=-g^{kl}\xkf{a_{l|k}-a_i\Gamma_{kl}^i}.
\]
In particular, when the metric $g$ of $M$ is Euclidean, the local expression of \eqref{eq:meq} is
\begin{eq}\label{eq:loc_u}
	\Delta u- u_{|k}u^{-1}u_{|k} =-\pt_k[A_k,u]-[A_k,u_{|k}]-u[u^{-1}u_{|k},a_k]-ua_{k|k},
\end{eq}
with boundary condition
\begin{eq}\label{eq:loc_bdy}
	\nu\lh\xkf{u^{-1}\xkf{du+[A,u]}+a}=0.
\end{eq}

It is clear from \eqref{eq:loc_u} that the leading term is the same as the tension field of harmonic map into $G$. In fact, it easy to verify $(u^{-1}u_{|k})^t=-u^{-1}u_{|k}$, thus $u_{|k}\in T_uG=u\g$. To compute the second fundamental form of $G\embedto\R^{r\times r}$, we firstly note that since any Lie subgroup of a compact Lie group is totally geodesic, one can take $G=SO(r)$ in the following computation. Clearly, the Lie algebra of $G$ is given by $\g=\set{B\in M_{r\times r}|B=-B^t}$, the tangent space of $G$ at $u$ is $T_{u}G=u\g=\set{uB|B\in\g}$ and the orthocomplement of $T_uG$ is $T_{u}^{\perp}G=\set{uC|C=C^t,\,C\in M_{r\times r}}$, where the inner product of $M_{r\times r}$ is given by $\inner{B,C}=\tr(BC^t)$. For a tangent vector $X=uB$ at $u$, clearly $\gamma(t)\eqdef u\exp(tB)$ is a curve tangent to $X$ and by the definition of $\II$, we have
\[
	\II(X,X)=[\ddot\gamma(t)]^\perp=[uB^2]^\perp=u\frac{B^2+(B^2)^t}{2}=uB^2=Xu^{-1}X.
\]
\subsection{The Negative Gradient Heat Flow}
We will show that the heat flow \eqref{eq:mflw} is the negative gradient flow of the energy of $S$,
\[
	\eng(S)\eqdef\frac{1}{2}\int_\Sigma|S^*(A)-A_0|^2.
\]
\begin{prop}[Monotonicity along the flow]\label{prop:evolution_of_energy}
	The oblique initial-boundary value problem \eqref{eq:mflw} is a negative gradient heat flow of $\eng(S)$.
\end{prop}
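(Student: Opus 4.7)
The plan is to compute the first variation of $\eng$ directly, identify the tangent space $T_S\Gamma(\Aut_GE)$ with $\Gamma(\g_E)$ via left translation, and verify that the oblique boundary condition annihilates the boundary term coming from integration by parts. Take a smooth family $S_s\in\Gamma(\Aut_GE)$ with $S_0=S$ and set $\xi\eqdef S^{-1}\frac{\pt S_s}{\pt s}\big|_{s=0}\in\Gamma(\g_E)$; equivalently, write $S_s=Sg_s$ with $g_0=\id$ and $\frac{\pt g_s}{\pt s}\big|_{s=0}=\xi$. Using the composition rule $(Sg)^*(A)=g^*(S^*(A))$ implicit in \eqref{eq:pullback}, together with the fact that the identity section of $\Aut_GE$ is parallel on $\End E$, differentiation at $s=0$ yields
\begin{equation*}
	\frac{\pt}{\pt s}\bigg|_{s=0}S_s^*(A)=\nabla_{S^*(A)}\xi.
\end{equation*}

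Writing $a\eqdef S^*(A)-A_0\in\Omega^1(\g_E)$, the chain rule and the integration-by-parts formula \eqref{eq:intgrate_by_part} (with $D=\nabla_{S^*(A)}$) then give
\begin{equation*}
	\frac{d}{ds}\bigg|_{s=0}\eng(S_s)=\bpar{a,\nabla_{S^*(A)}\xi}=\bpar{\xi,\nabla^*_{S^*(A)}a}+\int_{\pt\Sigma}\inner{\xi,\nu\lh a}\,i_\nu(*1).
\end{equation*}
The bulk term exhibits $\nabla^*_{S^*(A)}(S^*(A)-A_0)$ as the $L^2$-gradient of $\eng$ (in the left trivialization of the tangent space), while the boundary term shows that $\nu\lh(S^*(A)-A_0)=0$ is the \emph{natural} boundary condition paired with unrestricted variations, explaining why exactly this condition is imposed in \eqref{eq:mflw}.

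Specializing to the flow by taking $\xi=S^{-1}\frac{\pt S}{\pt t}$, the evolution equation of \eqref{eq:mflw} replaces $\nabla^*_{S^*(A)}a$ by $-\xi$ in the bulk, while the prescribed oblique condition $\nu\lh a=0$ annihilates the boundary integral, yielding
\begin{equation*}
	\frac{d}{dt}\eng(S(\cdot,t))=-\int_\Sigma\abs{S^{-1}\frac{\pt S}{\pt t}}^2\leq 0.
\end{equation*}
This simultaneously establishes the negative gradient flow structure of \eqref{eq:mflw} and the monotonic decay of $\eng$ along solutions. The main technical point is the variation formula $\frac{\pt}{\pt s}|_{s=0}S_s^*(A)=\nabla_{S^*(A)}\xi$, which requires the conjugation identity for the gauge action and some care with the bracket convention; once this is in hand, the rest of the proof reduces to a single integration by parts.
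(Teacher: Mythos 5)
Your proposal is correct and follows essentially the same line as the paper's proof: compute the first variation of $\eng$ under a vertical variation, identify the key formula $\left.\pt_s\right|_{s=0}S_s^*(A)=\nabla_{S^*(A)}\xi$, then integrate by parts via \eqref{eq:intgrate_by_part} so that the oblique condition kills the boundary term. The only difference is cosmetic: the paper derives the variation formula by expanding $\left.\pt_s\right|_{s=0}S^{-1}(s)\nabla_A(S(s)\sigma)$ term by term, while you factor it through the composition rule $(Sg)^*(A)=g^*(S^*(A))$ and the parallelism of the identity section — a tidier route to the same identity.
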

\begin{proof}
	Let $S(t)=S\exp(t\xi)$ be a vertical variation of $S$, where $\xi\in\Omega^0(\g_E)$, then for any $\sigma\in\Omega^0(E)$, by the definition of induced connection,
	\begin{align*}
		\ddt{0}(S^*(A)-A_0)(\sigma)
		& =\ddt{0}\left[S^{-1}(t)\nabla_{A}S(t)+A-A_0\right](\sigma) \\
		& =\ddt{0}S^{-1}(t)\nabla_{A}(S(t)\sigma) \\
		& =-\xi S^{-1}\nabla_{A}(S\sigma)+S^{-1}\nabla_{A}(S\xi \sigma) \\
		& =(\nabla_{A}\xi)(\sigma)+\left[S^{-1}\nabla_{A}S,\xi\right](\sigma) \\
		& =(\nabla_{S^*(A)}\xi)(\sigma).
	\end{align*}
	Therefore,
	\begin{align*}
		\left.\ppt{t}\right|_{t=0}\eng(S(t))
		& =\int_\Sigma\inner{\left.\ppt{t}\right|_{t=0}(S(t)^*(A)-A_0),S^*(A)-A_0} \\
		& =\int_\Sigma\inner{\nabla_{S^*(A)}\xi,S^*(A)-A_0}, \\
		\intertext{apply the integration by parts formula \eqref{eq:intgrate_by_part} and the boundary condition}
		RHS& =\int_{\pt \Sigma}\inner{\xi,\nu\lh(S^*(A)-A_0)}i_\nu(*1)+(\xi,\nabla_{S^*(A)}^*(S^*(A)-A_0)) \\
		& =(\xi,\nabla_{S^*(A)}^*(S^*(A)-A_0)).
	\end{align*}
	This implies that \eqref{eq:mflw} is a negative gradient flow of $\eng(S)$, and the energy is monotonically non-increasing along the flow.
\end{proof}
\subsection{Local Existence of the Flow}\label{sec:local_existence}
The local expression \eqref{eq:loc_u} implies that \eqref{eq:mflw} is a strict parabolic system, and the oblique initial-boundary condition of \eqref{eq:mflw} is compatible by assumption, thus the standard theory of parabolic systems (an imitation of \cite{Lieberman1996Second}*{Thm.~8.12}) implies that there exists some $S\in C^{2,\alpha}(\Sigma\times[0,\eps),\End E)\cap C^\infty(\Sigma\times(0,\eps),\End E)$, for some $\alpha\in(0,1)$ and $\eps$ small, which solves the system \eqref{eq:mflw}. However, one still needs to show that the solution stays on $\Aut_GE$. 

The same issue arises in the study of harmonic map flow and there are two ways to settle it. Hamilton \cite{Hamilton1975Harmonic}*{p.~108ff} proved a uniqueness theorem and then embedded the target manifold $Y$ properly into a Euclidean space with non-flat metric, such that there is a reflection on a tabular neighborhood of $Y$, which would give two solutions with the same initial-boundary condition as the original one, and concluded that the solution must lie in $Y$. On the other hand, Eells \& Sampson \cite{EellsSampson1964Harmonic}*{Sec.~7 (C), Theorem} considered the evolution of distance function from the solution to the target manifold. Since $[\Delta_g u+A(u)(\nabla u,\nabla u)]^\perp=[u_t]^\perp=0$ for any $u$ on the target manifold, the normal part vanishing property holds for harmonic heat flow. It enables one to apply the maximum principle to show that the solution stays on the target manifold along the flow. In the following, we will generalize the latter method.
\begin{prop}\label{prop:stay_on_mfd}
	Suppose that $\B$ is a metric vector bundle over compact manifold $M$ with boundary, $\E$ is a sub-fiber bundle of $\B$ with compact fibers. For a section $S$ of $\B$, consider the following heat flow
	\begin{eq}\label{eq:general_flw}
		\begin{cases}
			\heatop S=f(\nabla S,S), & (x,t)\in M\times(0,T), \\
			S(\cdot,0)=S_0\in\Omega^0(\E), & x\in M, \\
			\nu\lh(S^*(A)-A_0)=0, & (x,t)\in\pt M\times[0,T), 
		\end{cases}
	\end{eq}
	where $f:\Omega^1(\B)\times\Omega^0(\B)\to\Omega^0(\B)$ is a smooth bundle map, and $\Delta=-\nabla^*\nabla$ is the (negative) rough Laplace, $\nabla=\nabla_A$ is a metric compatible connection. If there holds the \emph{normal part vanishing property}
	\[
		\left[\Delta \tilde S+f(\nabla\tilde S,\tilde S)\right]^\perp=0,\quad\forall \tilde S\in\Omega^0(\E),
	\]
	then any \emph{smooth} solution of \eqref{eq:general_flw} must lie in $\E$.
\end{prop}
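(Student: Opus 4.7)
The strategy is to adapt the Eells--Sampson method of bounding the fiberwise distance from $S(x,t)$ to $\E$ by the parabolic maximum principle. Since $\E$ has compact fibers smoothly embedded in $\B$, there is a tubular neighborhood $U\subset\B$ of $\E$ on which the nearest-point projection $\pi\mathpunct{:}U\to\E$ is smooth. Define
\[
	\phi(x,t)\eqdef\abs{S(x,t)-\pi(S(x,t))}_{\B}^2,
\]
which measures the squared distance from the fiber $S(x,t)$ to $\E_x$. Because $S_0\in\Omega^0(\E)$, continuity of the smooth solution guarantees $S(x,t)\in U$ for $t$ in some initial interval, so $\phi$ is well-defined there. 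The aim is to establish a differential inequality of the form $\heatop\phi\leq C\phi$ in the interior together with a boundary condition compatible with the maximum principle, and then conclude $\phi\equiv 0$.

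The interior inequality comes from a direct but careful chain rule computation. Writing $\eta=S-\pi(S)$ for the normal displacement, one has $d\rho_S=2\eta$ with $\rho\eqdef\abs{\cdot-\pi(\cdot)}^2$, and
\[
	\heatop\phi=2\inner{\eta,\heatop S}-2\abs{\nabla S-\nabla(\pi\comp S)}^2+\text{(lower order terms from the Hessian of }\rho\text{)}.
\]
Substituting $\heatop S=f(\nabla S,S)$ and Taylor expanding $f$ around the point $\pi(S)\in\E$, one writes $f(\nabla S,S)=f(\nabla(\pi\comp S),\pi(S))+R$, where $R=O(\abs{\eta})\cdot(1+\abs{\nabla S})$. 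Since $\eta$ is normal to $\E$ at $\pi(S)$, the inner product $\inner{\eta,f(\nabla S,S)}$ only sees the $\perp$-component of $f$, and the normal part vanishing property applied to the section $\pi\comp S$ of $\E$ converts
\[
	[f(\nabla(\pi\comp S),\pi(S))]^\perp=-[\Delta(\pi\comp S)]^\perp,
\]
which is a pointwise quadratic in $\nabla(\pi\comp S)$ coming from the second fundamental form of $\E\subset\B$ (analogous to the tension-field calculation at the end of \S\ref{sec:pre}). Absorbing this quadratic term against the $-2\abs{\nabla(\eta)}^2$ piece and using $\abs{\nabla S}\leq C$ on compact time intervals by parabolic regularity, one arrives at
\[
	\heatop\phi\leq C(1+\abs{\nabla S}^2)\phi.
\]

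For the boundary condition, the key observation is that the oblique condition $\nu\lh(S^*(A)-A_0)=0$ is preserved under the projection to $\E$ at points of $\E$ (by the same compatibility assumption that $S_0$ satisfies it), so one obtains an estimate of the form $\pt_\nu\phi\leq C\phi$ on $\pt M\times[0,T)$ by decomposing the normal derivative into tangential and normal-to-$\E$ components and using the structure of $\pi$. Combined with the initial condition $\phi(\cdot,0)\equiv 0$ and the interior inequality, the parabolic maximum principle (Hopf lemma for the boundary estimate) then yields $\phi\equiv 0$ on $M\times[0,\eps)$, which means $S(x,t)\in\E$ there. A standard open-closed continuation argument, using that the set of times for which $S(\cdot,t)\in\E$ is both open (by smoothness of $S$) and closed (by continuity), extends the conclusion to all of $[0,T)$. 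The main obstacle is the second paragraph: verifying that after the Taylor expansion of $f$ and use of the normal vanishing property, the remaining normal components genuinely cancel so that no $\abs{\nabla S}^2$ term with a bad sign survives to spoil the maximum principle; this is precisely where the hypothesis on $f$ is used.
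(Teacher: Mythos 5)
Your proposal is correct and follows essentially the same route as the paper's own proof: the Eells--Sampson squared-distance function $\rho=\tfrac12|S-\pi(S)|^2$, the Taylor expansion of $f$ around the projected section $\pi(S)$ with the normal part vanishing property annihilating the term $\inner{S-\pi(S),\,f(\nabla(\pi(S)),\pi(S))+\Delta(\pi(S))}$, absorption of the gradient difference into $-|\nabla(S-\pi(S))|^2$, and the parabolic maximum principle. The only point where you are looser than the paper is the boundary step: the paper computes $\nu\lh\rd\rho=0$ exactly, using the oblique condition to write $\nu\lh\nabla_AS=-Sa(\nu)$ and the fact that the infinitesimal $G$-action is an isometry (so $\inner{(S-\pi(S))a,S-\pi(S)}=0$ and $\pi(S)a$ is tangent to the fiber), whereas you only assert a Robin-type inequality $\pt_\nu\rho\leq C\rho$ without this computation.
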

Before going into the proof, we remark that the assertion (\ref{lab:1_mthm:A}) of Theorem~\ref{mthm:A} follows immediately from Prop.~\ref{prop:stay_on_mfd}. In fact, take $\B$ be the endomorphism bundle $\End E$ and $\E$ be the gauge bundle $\Aut_G E$, the normal part vanishing property holds, since
\[
	\nabla^*_{\tilde S^*(A)}(\tilde S^*(A)-A_0)\in\Omega^0(\g_E),\quad\forall\tilde S\in\Omega^0(\Aut_G E).
\]
\begin{proof}[proof of Prop.~\ref{prop:stay_on_mfd}]
	Let $h$ be the metric on $\B$, therefore $h_x\eqdef\inner{\cdot,\cdot}_x$ is an inner product on each fiber $\B_x$ which is smooth with respect to $x$. Since each fiber $\E_x$ is compact and smooth, it is well known that there exists $\delta=\delta(x)$, such that for any $v\in \B_x$, with $d(v,\E_x)\eqdef\inf_{w\in\E_x}[h_x(v-w,v-w)]^{1/2}< \delta$, there exists a unique $\bar v\in\E_x$, satisfying
	\[
		d(v,\E_x)=[h_x(v-\bar v,v-\bar v)]^{1/2}.
	\]
	Thus, the fiber-wise projection map
	\[
		\pi_x:\B_x\to\E_x,\quad \pi_x(v)=\bar v,
	\]
	is well defined for $v$ satisfying $d(v,\E_x)< \delta$. Moreover, it is clear that $v-\pi_x(v)\perp\E_x$. Given a smooth section $V$ of $\B$ with $V(x)$ belonging to the $\delta$-neighborhood of $\E_x$, the assignment $x\mapsto\pi_x(V(x))$ leads to a smooth section of $\E$, denote it by $\pi(V)$.

	Now, for $t\in[0,\eps)$, $\eps>0$ is sufficiently small, $S(x,t)$ belongs to the $\delta$-neighborhood of $\E_x$. Consider the distance function of $S$
	\[
		\rho(x,t)\eqdef\frac{1}{2}\inner{S(x,t) -\pi_x(S(x,t)),S(x,t)-\pi_x(S(x,t))}_x.
	\]
	In order to simplify the notation, we just write
	\[
		\rho(x,t)\eqdef\frac{1}{2}\inner{S-\pi(S),S-\pi(S)}_x.
	\]
	Then, the evolution of $\rho$ is given by
	\begin{align*}
		\heatop\rho
		& =\inner{S-\pi(S),\pt_t S-\rd\pi_x|_{S(x)}(\pt_{t}S)}_x+\rd^*\rd\rho \\
		& =\inner{S-\pi(S),\pt_t S}_x+\rd^*\inner{S-\pi(S),\nabla(S-\pi(S))}_x \\
		& =\inner{S-\pi(S),\pt_tS-\nabla_{e_j}\nabla_{e_j}(S-\pi(S))}_x \\
		& \qquad+\omega^j(\nabla_{e_i}e_i)\inner{S-\pi(S),\nabla_{e_j}(S-\pi(S))}_x \\
		& \qquad-\inner{\nabla_{e_j}(S-\pi(S)),\nabla_{e_j}(S-\pi(S))}_x \\
		& =\inner{S-\pi(S),\pt_t S+\nabla^*\nabla(S-\pi(S))}_x-\inner{\nabla_{e_j}(S-\pi(S)),\nabla_{e_j}(S-\pi(S))}_x \\
		& =\inner{S-\pi(S),\pt_tS-\Delta S+\Delta(\pi(S))}_x-|\nabla_{e_j}(S-\pi(S))|^2 \\
		& =\inner{S-\pi(S),f(\nabla S,S)+\Delta(\pi(S))}_x-|\nabla_{e_j}(S-\pi(S))|^2.
	\end{align*}
	Thus,
	\begin{align*}
		\heatop\rho+|\nabla_{e_j}(S-\pi(S))|^2
		& =\inner{S-\pi(S),f(\nabla S,S)+\Delta(\pi(S))}_x \\
		& =\big\langle S-\pi(S),f(\nabla S,S)-f(\nabla(\pi(S)),S) \\
		& \qquad+f(\nabla(\pi(S)),S)-f(\nabla(\pi(S)),\pi(S)) \\
		& \qquad+f(\nabla(\pi(S)),\pi(S))+\Delta(\pi(S))\big\rangle_x.
	\end{align*}
	Let us process the three terms one by one. Set $S_\tau=\pi(S)+\tau(S-\pi(S))$, $\tau\in[0,1]$, then
	\begin{align*}
		\inner{S-\pi(S),f(\nabla S,S)-f(\nabla(\pi(S)),S)}_x
		& =\inner{S-\pi(S),\int_0^1\pt_\tau f(\nabla S_\tau,S)\rd \tau}_x \\
		& =\int_0^1\inner{S-\pi(S),\rd f|_{(\nabla S_\tau ,S)}(\nabla(S-\pi(S)),0)}_x\rd \tau \\
		& \leq \|\rd f\|\cdot |S-\pi(S)|\cdot|\nabla(S-\pi(S))|\\
		& =\sqrt{2\rho}\|\rd f\|\cdot|\nabla_{e_j}(S-\pi(S))|\\
		&\leq \rho\|df\|^2+\frac{1}{2}|\nabla_{e_j}(S-\pi(S))|^2.
	\end{align*}
	In the same manner,
	\[
		\inner{S-\pi(S),f(\nabla(\pi(S)),S)-f(\nabla(\pi(S)),\pi(S))}_x\leq2\rho\|\rd f\|.
	\]
	By the normal part vanishing property, the last term vanishes. In conclusion,
	\begin{eq}\label{eq:dist}
		\heatop\rho\leq \rho\|\rd f\|^2+2\rho\|df\|\leq C(f)\rho.
	\end{eq}
	Now, on the parabolic boundary, one has
	\begin{align*}
		\rho & =0,\quad\text{on } M\times\set{t=0}, \\
		\nu\lh\rd\rho & =\nu\lh\inner{\nabla_A(S-\pi(S)),S-\pi(S)} \\
		& =\nu\lh\inner{\nabla_AS,S-\pi(S)} \\
		& =\nu\lh\inner{-Sa,S-\pi(S)} \\
		& =-\nu\lh\inner{(S-\pi(S))a+\pi(S)a,S-\pi(S)} \\
		& =-\nu\lh\inner{(S-\pi(S))a,S-\pi(S)}.
	\end{align*}
	The last term vanishes because
	\[
		\inner{(S-\pi(S))a,S-\pi(S)}=\frac{1}{2}\ddt{0}|(S-\pi(S))\exp(ta)|^2,
	\]
	and the action of $\exp(ta(x))\in G\embedto SO(r)\subset M_{r\times r}$ on $S(x,t)-\pi(S(x,t))\in M_{r\times r}$ is an isometry.

	Since $\rho$ satisfies \eqref{eq:dist} with the boundary condition $\CM(\rho)=0$, where
	\[
		\CM(\rho)\eqdef
		\begin{cases}
			\rho, & \bar M\times\set{t=0} \\
			\nu\lh\rd\rho, & \pt M\times(0,T),
		\end{cases}
	\]
	the Proposition follows from maximum principle.
\end{proof}
\section{Key Estimates \& Important Lemmas}\label{sec:key_lemmas}
In the forthcoming subsections, we prove some basic estimates needed in the blowup analysis. The first one is the local energy inequality, which allows us to compare the energy at different times. The second one is the $\eps$-regularity, which admits local regularity of solutions under some smallness of energy condition. The last one is removable of singularity, which asserts that isolated singularity (maybe at boundary) of our equations is removable.
\subsection{Local Energy Inequality}
In the case of harmonic map flow, Struwe presented a local energy inequality in \cite{Struwe1985evolution}*{Lem.~3.6}, which asserts that the formation of bubble needs time when energy concentrates at a point. Since our boundary condition is good enough, following the method of Struwe, one can show the following version for \eqref{eq:mflw}, which holds both in the interior and at the boundary.
\begin{prop}[Local Energy Inequality]\label{prop:local_eng_ineq}
	Suppose $S\in\Gamma(\Aut_GE)$ is a solution of \eqref{eq:mflw} with initial energy $\eng(S_0;\Sigma)< +\infty$ ($S_0$ is smooth). Then for $x_0\in \Sigma$, $0< T_1< T_2\leq T$ and $0< R_2< R_1\leq i_\Sigma$, where $i_\Sigma$ is the injectivity radius of $\Sigma$, there holds the \emph{local energy inequality}
	\begin{align*}
		\eng(S(T_2);D_{R_2}(x_0)) & \leq \eng(S(T_1);D_{R_1}(x_0))+C\frac{|T_2-T_1|}{|R_1-R_2|^2}\eng(S_0;\Sigma), \\
		\intertext{and the \emph{reverse local energy inequality},}
		\eng(S(T_1);D_{R_2}(x_0)) & \leq\eng(S(T_2);D_{R_1}(x_0)) +2\int_{T_1}^{T_2}\int_{D_{R_1}(x_0)}|\pt_tS|^2 +C\frac{|T_2-T_1|}{|R_1-R_2|^2}\eng(S_0;\Sigma),
	\end{align*}
	for some universal constant $C>0$, where $D_r(x_0)\eqdef
	\overline{B_r(x_0)}\cap\Sigma$.
\end{prop}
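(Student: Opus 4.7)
The plan is to imitate Struwe's cut-off argument for the harmonic map flow, using that \eqref{eq:mflw} is the negative gradient flow of $\eng$ and that the oblique boundary condition exactly cancels the interior integration-by-parts boundary term. Set $a\eqdef S^*(A)-A_0$ and $\xi\eqdef S^{-1}\pt_tS$, so that the flow \eqref{eq:mflw} reads $\xi=-\nabla^*_{S^*(A)}a$ and, as computed in the proof of Prop.~\ref{prop:evolution_of_energy}, $\pt_ta=\nabla_{S^*(A)}\xi$. Let $\varphi\in C^\infty_c(B_{R_1}(x_0))$ be a cut-off with $\varphi\equiv1$ on $B_{R_2}(x_0)$ and $|\nabla\varphi|\leq C/(R_1-R_2)$; note that $\varphi$ is \emph{not} required to vanish on $\pt\Sigma\cap D_{R_1}(x_0)$.

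First I would compute the evolution of the localized energy. Since $e(S)=\tfrac12|a|^2$ and $\pt_ta=\nabla_{S^*(A)}\xi$, the identity \eqref{eq:intgrate_by_part} together with $\nu\lh a=0$ on $\pt\Sigma$ gives
\begin{align*}
	\dt{}\int_\Sigma\varphi^2 e(S)
	&=\int_\Sigma\varphi^2\inner{a,\nabla_{S^*(A)}\xi}
	 =-\int_\Sigma\inner{\nabla^*_{S^*(A)}(\varphi^2 a),\xi}\\
	&=-\int_\Sigma\varphi^2|\xi|^2+2\int_\Sigma\varphi\inner{\iota_{\nabla\varphi}a,\xi},
\end{align*}
where in the last step I use the Leibniz rule $\nabla^*_{S^*(A)}(\varphi^2a)=\varphi^2\nabla^*_{S^*(A)}a-2\varphi\,\iota_{\nabla\varphi}a$ and the flow equation $\nabla^*_{S^*(A)}a=-\xi$. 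Next I split the cross term by Young's inequality in two opposite ways. The upper bound $2|\inner{\varphi\iota_{\nabla\varphi}a,\xi}|\leq\varphi^2|\xi|^2+|\nabla\varphi|^2|a|^2$ yields
\[
\dt{}\int_\Sigma\varphi^2e(S)\leq\int_\Sigma|\nabla\varphi|^2|a|^2\leq\frac{C}{(R_1-R_2)^2}\,\eng(S(t);\Sigma),
\]
while the lower bound $2\inner{\varphi\iota_{\nabla\varphi}a,\xi}\geq-\varphi^2|\xi|^2-|\nabla\varphi|^2|a|^2$ gives
\[
\dt{}\int_\Sigma\varphi^2e(S)\geq-2\int_\Sigma\varphi^2|\xi|^2-\frac{C}{(R_1-R_2)^2}\,\eng(S(t);\Sigma).
\]
Integrating each in $t\in[T_1,T_2]$, invoking the monotonicity $\eng(S(t);\Sigma)\leq\eng(S_0;\Sigma)$ from Prop.~\ref{prop:evolution_of_energy}, and finally using $\chi_{D_{R_2}(x_0)}\leq\varphi^2\leq\chi_{D_{R_1}(x_0)}$ on each side, delivers the two inequalities. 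For the reverse one I also use that, because $G\embedto\SO(r)$ is totally geodesic with bi-invariant metric, $|\xi|=|S^{-1}\pt_tS|=|\pt_tS|$, so $\int\varphi^2|\xi|^2\leq\int_{D_{R_1}(x_0)}|\pt_tS|^2$.

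The point I expect to be the \emph{main obstacle} is the bookkeeping at the boundary. When $x_0\in\pt\Sigma$, the cap $D_{R_i}(x_0)$ has two kinds of boundary: an ``interior'' portion $\Sigma\cap\pt B_{R_i}(x_0)$, where $\varphi$ vanishes, and a ``manifold'' portion $\pt\Sigma\cap B_{R_i}(x_0)$, where $\varphi$ need not vanish. The integration by parts \eqref{eq:intgrate_by_part} applied to the $\g_E$-valued one-form $\varphi^2 a$ against $\xi\in\Omega^0(\g_E)$ produces the surface term $\int_{\pt\Sigma}\varphi^2\inner{\xi,\nu\lh a}\,i_\nu(*1)$, which vanishes precisely by the oblique boundary condition in \eqref{eq:mflw}. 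This compatibility between the energy's natural boundary term and the imposed boundary condition is exactly what makes the localized energy estimate work up to $\pt\Sigma$ in the same form as in the interior; once this cancellation is identified, the rest of the argument is the universal Struwe calculation.
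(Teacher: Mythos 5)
Your proposal is correct and follows essentially the same route as the paper's proof: a cut-off supported away from $\Sigma\cap\pt B_{R_1}$ but not from $\pt\Sigma$, differentiation of the localized energy via $\pt_t(S^*(A)-A_0)=\nabla_{S^*(A)}(S^{-1}\pt_tS)$, integration by parts with the oblique boundary condition killing the surface term, and Young's inequality applied in both directions; the paper merely writes the computation in terms of $\pt_tS=S\xi$ and the conjugation identities rather than $\xi$ directly. The only blemish is an immaterial sign slip in the integration-by-parts step (by \eqref{eq:intgrate_by_part} one gets $+\bpar{\xi,\nabla^*_{S^*(A)}(\varphi^2a)}$, not its negative), which affects only the sign of the cross term that you subsequently bound in absolute value.
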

\begin{rmk}
	We will need the boundary version because one cannot exclude energy concentrates at the boundary. For example, to show the finiteness of singularity points on $\Sigma$ at a singular time, one needs to apply the local energy inequality at the boundary.
\end{rmk}
\begin{proof}
	Let $\Sigma_{T_1}^{T_2}\eqdef\set{(x,t)|x\in \Sigma, t\in[T_1,T_2]}$ and $\phi\in C^\infty_0(D_{R_1}(x_0))$ be a cut off function with $\phi\equiv 1$ on $D_{R_2}(x_0)$, $|\nabla\phi|\leq\frac{2}{R_1-R_2}$ and $0\leq\phi\leq1$. Firstly, by the boundary condition, one can compute
	\begin{align*}
		\ppt{t}\int_\Sigma e(s)\phi^2 & =\int_\Sigma\inner{S^*(A)-A_0,\pt_t(S^{-1}\nabla_AS)}\phi^2 \\
		& =\int_\Sigma\inner{S(S^*(A)-A_0),-\pt_tSS^{-1}\nabla_AS+\nabla_A\pt_tS}\phi^2 \\
		& =\int_\Sigma\inner{S(S^*(A)-A_0)(e_i)S^{-1}\nabla_{A;e_i}S,\pt_tS}\phi^2
		+\inner{\nabla_A^*(S(S^*(A)-A_0)\phi^2),\pt_tS}\\
		& \qquad+\int_{\pt \Sigma}\inner{\nu\lh(S(S^*(A)-A_0)\phi^2),\pt_tS}i_\nu(*1) \\
		& =\int_\Sigma\inner{S(S^*(A)-A_0)(e_i)\comp S^{-1}\nabla_{A;e_i}S,\pt_tS}\phi^2
		+\inner{\nabla_A^*(S(S^*(A)-A_0)\phi^2),\pt_tS}\\
		& \overset{\eqref{eq:conj_star_diff}}{=}\int_\Sigma\inner{\nabla_{A+S^{-1}\nabla_AS}^*(S(S^*(A)-A_0)\phi^2)+S^{-1}\nabla_{A;e_i}S\comp S(S^*(A)-A_0)(e_i)\phi^2,\pt_tS} \\
		& =\int_\Sigma\inner{\nabla_{S^*(A)}^*(S(S^*(A)-A_0)\phi^2)+\nabla_{S^*(A);e_i}S\comp(S^*(A)-A_0)(e_i)\phi^2,\pt_tS},
	\end{align*}
	the last equality follows from the fact
	\[
		\nabla_{A+S^{-1}\nabla_AS;e_i}S=\nabla_{A;e_i}S+[S^{-1}\nabla_{A;e_i}S,S]
		=S^{-1}\nabla_{A;e_i}S\comp S.
	\]
	Now, note that for any $T\in\Omega^1(\g_E)$, there holds
	\begin{eq}\label{eq:loc_eng_ineq_compute_1}
		\nabla_A^*(S\comp T)=S\nabla_A^*T-\nabla_{A;e_i}S\comp T(e_i),
	\end{eq}
	and
	\begin{eq}\label{eq:loc_eng_ineq_compute_2}
		\nabla_A^*(T\phi^2)=\phi^2\nabla_A^*T-d(\phi^2)(e_i)T(e_i).
	\end{eq}
	Apply the flow equation \eqref{eq:mflw}, we continue the computation as
	\begin{align*}
		& \nabla_{S^*(A)}^*(S(S^*(A)-A_0)\phi^2)+\nabla_{S^*(A);e_i}S\comp(S^*(A)-A_0)(e_i)\phi^2 \\
		& \qquad\overset{\eqref{eq:loc_eng_ineq_compute_1}}{=}S\nabla_{S^*(A)}^*[(S^*(A)-A_0)\phi^2] \\
		& \qquad\overset{\eqref{eq:loc_eng_ineq_compute_2}}{=}S\nabla_{S^*(A)}^*(S^*(A)-A_0)\phi^2-d(\phi^2)(e_i)S\comp(S^*(A)-A_0)(e_i) \\
		& \qquad\overset{\eqref{eq:mflw}}{=}-\pt_tS\phi^2-d(\phi^2)(e_i)S\comp(S^*(A)-A_0)(e_i).
	\end{align*}
	Therefore, by Cauchy-Schwarz inequality,
	\begin{align*}
		\int_{T_1}^{T_2}\ppt{t}\int_\Sigma e(S)\phi^2+\int_{\Sigma_{T_1}^{T_2}}|\pt_tS|^2\phi^2
		&=-\int_{\Sigma_{T_1}^{T_2}}2\phi\rd\phi(e_i)\inner{S\comp(S^*(A)-A_0)(e_i),\pt_tS}\\
		& \leq\int_{T_1}^{T_2}\int_{\Sigma}2|\nabla\phi||S^*(A)-A_0|\cdot\phi|\pt_tS| \\
		& \leq2\int_{T_1}^{T_2}\int_\Sigma|\nabla\phi|^2e(S)+\int_{\Sigma_{T_1}^{T_2}}|\pt_tS|^2\phi^2 \\
		& \leq\int_{T_1}^{T_2}\frac{8}{|R_1-R_2|^2}\eng(S(\cdot,t);D_{R_1})+\int_{\Sigma_{T_1}^{T_2}}|\pt_tS|^2\phi^2 \\
		& \leq\frac{8|T_2-T_1|}{|R_1-R_2|^2}\eng(S(\cdot,0))+\int_{\Sigma_{T_1}^{T_2}}|\pt_tS|^2\phi^2,
	\end{align*}
	i.e.,
	\begin{align*}
		8\frac{|T_2-T_1|}{|R_1-R_2|^2}\eng(S_0) & \geq\int_\Sigma e(S(\cdot,T_2))\phi^2-\int_\Sigma e(S(\cdot,T_1))\phi^2 \\
		& \geq\eng(S(T_2);D_{R_2})-\eng(S(T_1);D_{R_1}).
	\end{align*}
	Similarly,
	\begin{align*}
		-\frac{8|T_2-T_1|}{|R_1-R_2|^2}\eng(S_0) & \leq\int_{T_1}^{T_2}\ppt{t}\int_\Sigma e(S)\phi^2+2\int_{\Sigma_{T_1}^{T_2}}|\pt_tS|^2\phi^2 \\
		& \leq\eng(S(T_2);D_{R_1})-\eng(S(T_1);D_{R_2})+2\int_{T_1}^{T_2}\int_{D_{R_1}}|\pt_tS|^2.
	\end{align*}
\end{proof}
\subsection{\epst-Regularity}
The $\eps$-regularity is a key lemma in semi-linear partial differential equations, which asserts that when the energy is small, the critically nonlinear equation behaves like a linear one. It was first discovered for harmonic maps by Sacks and Uhlenbeck in their celebrated paper \cite{SacksUhlenbeck1981existence}*{Main Estimate~3.2}, then Schoen gave a different proof based on an argument by contradiction \cite{Schoen1984harmonic}*{Thm.~2.2}. For the harmonic map flow, we refer to \cite{Struwe1985evolution}*{Lem.~3.10} for a closed surface and \cite{Chang1989Heat}*{Lem.~4.2} for a surface with boundary. Since \eqref{eq:mflw} is a small perturbation of harmonic maps, it can be expected that when the scale is small enough, the $\eps$-regularity still holds. The proof is based on parabolic estimates and an interpolation inequality of Nirenberg.
\begin{thm}[$\eps$-Regularity]\label{thm:epsreg}
	Suppose $S$ is a solution of \eqref{eq:mflw} on $\Sigma\times[0,T)$. Let $z_0=(x_0,t_0)\in \Sigma\times[\delta,T)$, for some fixed $\delta>0$ and denote the parabolic ball by 
	\[
		P_r(z_0)\eqdef \set{z=(x,t)|x\in\overline{B_r(x_0)}\cap \Sigma=\mathpunct{:}D_r(x_0),\sqrt{t_0-t}\leq r}.
	\]
	Then there exist some $\eps_0>0$ (which is independent to $S$ and $z_0$) and $r_0$ with $0< r_0< \sqrt\delta/2$, such that for $r< r_0$, if there holds the ``small energy'' condition
	\[
		\sup_{[t_0-r^2,t_0)}\eng(S(t);D_r(x_0))\leq\eps_0, 
	\]
	then
	\[
		\sup_{P_{r/2}(z_0)}|\nabla_A^kS|\leq C_kr^{-k},\quad\forall k=1,2,\cdots,
	\]
	for some constant $C_k$.
\end{thm}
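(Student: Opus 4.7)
I would follow the classical Struwe-type argument (\cite{Struwe1985evolution}*{Lem.~3.10}), adapted to our setting by rescaling the oblique boundary value problem and exploiting the local form \eqref{eq:loc_u}--\eqref{eq:loc_bdy}. Fix a local trivialization of $E$ in a chart around $x_0$ (using Fermi/boundary-normal coordinates if $x_0\in\pt\Sigma$, so that $\pt\Sigma$ is straightened). In these coordinates $S$ is represented by a $G$-valued map $u$, and \eqref{eq:mflw} reads
\[
    \pt_t u-\Delta u = u_{|k}u^{-1}u_{|k}+L(x,u,\nabla u),
\]
where $L$ is linear in $\nabla u$ with smooth coefficients depending on $A,A_0$. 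Rescale by setting $v(y,\tau)\eqdef u(x_0+ry,t_0+r^2(\tau-1))$ for $(y,\tau)$ in the unit parabolic (half-)ball $P_1$; then $v$ solves a problem of the same type with coefficients of size $O(r)$, and, by the local energy inequality (Prop.~\ref{prop:local_eng_ineq}), the rescaled energy satisfies $\sup_\tau\int_{D_1}|\nabla v|^2\le 2\eps_0$, provided $r< r_0$ is small enough.

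\textbf{Step 1 (Caccioppoli-type identity for $\nabla v$).} I would test the equation against $\eta^2\Delta v$, where $\eta(y,\tau)$ is a smooth cutoff supported in $P_1$ with $\eta\equiv 1$ on $P_{1/2}$, and integrate in space-time. After integration by parts one obtains
\begin{equation*}
    \sup_{\tau}\int_{D_1}\eta^2|\nabla v|^2+\iint \eta^2\bigl(|\nabla^2 v|^2+|\pt_\tau v|^2\bigr)\le C\iint\eta^2|\nabla v|^4+\text{(lower order)}+\text{(boundary terms)}.
\end{equation*}
The boundary terms on $\pt\Sigma$ come from $\int\pt_\nu v\cdot\Delta v\,\eta^2$, and I would control them by rewriting $\pt_\nu v$ via the oblique condition \eqref{eq:loc_bdy} as a bounded function of $(v,\nabla_{\text{tan}}v,A,a)$ and applying a trace inequality followed by a small-energy absorption, exactly as in Chang~\cite{Chang1989Heat}*{Lem.~4.2}.

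\textbf{Step 2 (absorption via Gagliardo--Nirenberg).} The critical nonlinear term is handled by the $2$-dimensional interpolation $\|f\|_{L^4}^4\le C\|f\|_{L^2}^2\|\nabla f\|_{L^2}^2$ applied to $f=\eta\nabla v$, which yields
\begin{equation*}
    \iint\eta^2|\nabla v|^4 \le C\Bigl(\sup_\tau\int_{\mathrm{supp}\,\eta}|\nabla v|^2\Bigr)\iint\bigl(\eta^2|\nabla^2 v|^2+|\nabla\eta|^2|\nabla v|^2\bigr)\le C\eps_0\iint\eta^2|\nabla^2 v|^2+\cdots.
\end{equation*}
Taking $\eps_0$ small enough absorbs the right-hand side, giving
\[
    \sup_\tau\int_{D_{1/2}}|\nabla v|^2+\iint_{P_{1/2}}\bigl(|\nabla^2 v|^2+|\pt_\tau v|^2\bigr)\le C\eps_0.
\]

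\textbf{Step 3 (bootstrap).} Once $\nabla v\in L^4(P_{1/2})$ and $\pt_\tau v\in L^2(P_{1/2})$, the right-hand side of the equation lies in $L^2$, and linear parabolic $L^p$ theory for oblique boundary problems (\cite{Lieberman1996Second}*{Thm.~7.17 and Ch.~IV}) raises the integrability of $\nabla^2 v$ and $\pt_\tau v$. A standard iteration pushes $\nabla v$ into $L^\infty$ on $P_{1/4}$, after which the equation becomes a linear parabolic system with Hölder coefficients, and the Schauder estimate for oblique problems (\cite{Lieberman1996Second}*{Thm.~4.31}) delivers $\|v\|_{C^{k,\alpha}(P_{1/2})}\le C_k$ for every $k$. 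Scaling back by $y=(x-x_0)/r$, $\tau-1=(t-t_0)/r^2$ turns each derivative bound into $\sup_{P_{r/2}(z_0)}|\nabla_A^k S|\le C_k r^{-k}$.

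\textbf{Main obstacle.} The delicate point is Step~1 at a boundary point: the test-function argument produces a boundary integral $\int_{\pt\Sigma\cap D_1}\pt_\nu v\cdot\Delta v\,\eta^2$ which does \emph{not} vanish, because $\pt_\nu v$ is only controlled through the nonlinear oblique relation \eqref{eq:loc_bdy}. I would spend most of the effort carefully differentiating the boundary condition once tangentially and using it to trade $\pt_\nu v$ for tangential derivatives of $v$ plus terms of order $|a|+|A|$, then absorbing via a trace/interpolation inequality $\|f\|_{L^2(\pt\Sigma\cap D_1)}^2\le C(\|f\|_{L^2}\|\nabla f\|_{L^2}+\|f\|_{L^2}^2)$ combined with the smallness of $\eps_0$. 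Away from the boundary this step is routine; the $C^0$ Lebesgue part and the Schauder bootstrap are standard once Step~1 has been established uniformly up to $\pt\Sigma$.
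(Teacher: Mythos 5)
Your outline is correct and would yield the theorem, but it takes a genuinely different route at the two places where the paper's proof has its actual content. First, you organize the core estimate as a Struwe-type Caccioppoli identity (testing against $\eta^2\Delta v$, extracting $\sup_\tau\int\eta^2|\nabla v|^2$ and $\iint\eta^2|\nabla^2v|^2$, then absorbing the quartic term); the paper skips the testing step entirely and instead applies the global parabolic $W^{2,1}_p$ estimate directly to $v=\phi u$, obtaining $\|\phi u\|_{W^{2,1}_p}\le C\|\phi^{1/2}|\nabla u|\|_{L^{2p}}^2+C$, and then absorbs the right-hand side with the same Gagliardo--Nirenberg interpolation you use, iterating $p=2\to4\to\cdots$. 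Second, and more importantly, your treatment of the oblique boundary condition --- differentiating it tangentially and controlling the resulting boundary integrals by trace inequalities --- is precisely the step the paper arranges to avoid: it sets $\tilde v=g_1vg_2$ with $g_1=\exp(-rA(\nu))$ and $g_2=\exp(-r(a(\nu)-A(\nu)))$, chosen so that $\pt\tilde v/\pt\nu=0$ on the flattened boundary, and then extends $\tilde v$ by even reflection, after which the boundary case becomes an interior Dirichlet problem and no boundary integrals ever appear. Your route is viable (the oblique condition $\pt_\nu u=-[A(\nu),u]-ua(\nu)$ is zeroth order in $u$ with bounded right-hand side, so the tangential differentiation produces only first-order data that a trace inequality plus smallness of $\eps_0$ can handle), but it is the technically heavier path, and you correctly flag it as the main obstacle; the homogenization-plus-reflection device buys you a proof in which the boundary case costs essentially nothing beyond the interior one. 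Two minor points: the hypothesis bounds $\int|du+[A,u]+ua|^2$, not $\int|\nabla u|^2$, so you need to absorb the zeroth-order terms (of size $Cr_0^2$) before claiming $\sup_\tau\int_{D_1}|\nabla v|^2\le2\eps_0$; and on a bounded (half-)disc the Ladyzhenskaya inequality needs the additional lower-order term $C\|f\|_{L^2}^4$, which is harmless here.
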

\begin{proof}
	For simplicity, suppose that the metric of $\Sigma$ is Euclidean. On $P_r(z_0)$, $S$ can be viewed as a map (after taking a fixed local trivialization) into $G\embedto SO(r)\subset \R^{r\times r}$, and the problem reduces to \eqref{eq:loc_u}, i.e.,
	\begin{eq}\label{eq:eps_eng:loc_meq}
		\begin{split}
			\heatop{u} &   =-(\pt_ku+[A_k,u])u^{-1}(\pt_ku+[A_k,u])+\pt_k[A_k,u]+[A_k,\pt_ku+[A_k,u]] \\ &\qquad+u[u^{-1}(\pt_ku+[A_k,u]),a_k]+u(\pt_ka_k+[A_k,a_k]).
		\end{split}
	\end{eq}

	For the boundary condition on $P_r(z_0)$, it can be divided into interior case and boundary case. In fact, when $D_r^\circ(x_0)\cap\pt\Sigma\neq\emptyset$, without loss of generality, we can assume the boundary $D_r^\circ(x_0)\cap\pt\Sigma$ is flat. Then there are three cases based on the relative location of $x_0$:
	\begin{itemize}
		\item Interior: $D^\circ_r(x_0)=B_r(x_0)$, that is, the open disc does not touch the boundary; 
		\item Faraway from the boundary: $D^\circ_r(x_0)\cap\pt\Sigma\neq\emptyset$ and $\dist(x_0,\pt\Sigma)\geq r/2$. This case can be reduced to the interior case with $r$ replaced by $r/2$.
		\item Near the boundary: $D^\circ_r(x_0)\cap\pt\Sigma\neq\emptyset$ and $\dist(x_0,\pt\Sigma)< r/2$. In this case, we may assume, without loss of generality, $D_{r}(x_0)$ is the upper half disc $B_{r/2}^+(x_0)$, which is centered at $x_0$ with radius $r/2$. 
	\end{itemize}
	\begin{figure}[!htbp]
		\centering
		\includegraphics{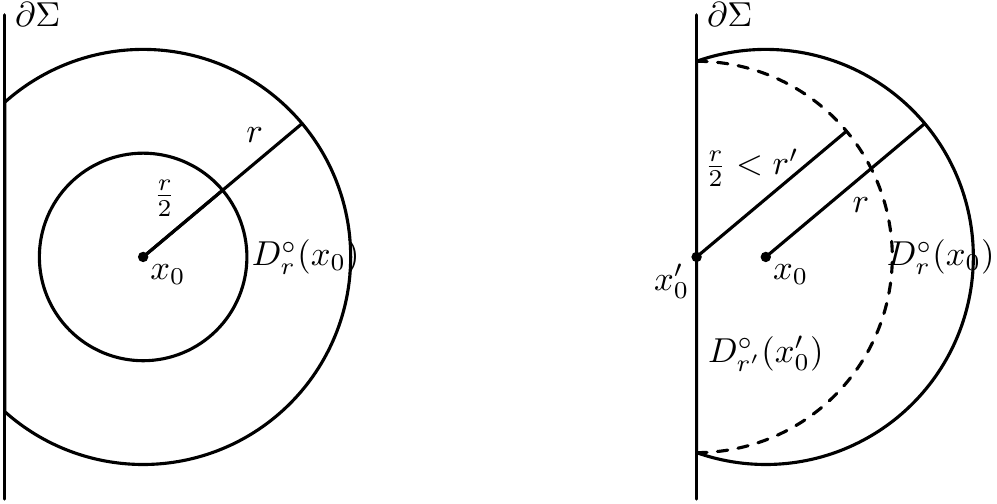}
		\caption{Faraway from the boundary and near the boundary cases}
	\end{figure}
	Thus, in what follows, we will only consider the following two cases:
	\begin{itemize}
		\item Interior: $D_r^\circ(x_0)=B_r(x_0)$, the boundary condition will be of Dirichlet type;
		\item Boundary: $D_r^\circ(x_0)=B^+_{r}(x_0)$, the boundary condition will be oblique.
	\end{itemize}

	Before we turn to the derivation of boundary condition, we note that the problem is conformally invariant. Let $(x,t)\to(x_0+rx,t_0+r^2t)$, which takes $D_r(x_0)\times[t_0-r^2,t_0)$ to $D_1\times[-1,0)$. Then we may assume that $r=1$, $z_0=(0,0)$, with the following smallness condition 
	\[
		\sup_{t\in[-1,0)}\int_{D_1}|\nabla u|^2\leq 2\eps_0.
	\]
	In fact, the smallness condition is
	\[ 
		\sup_{t\in[t_0-r^2,t_0)}\eng(S(t),D_r(x_0))=\sup_{t\in[t_0-r^2,t_0)}\int_{D_r(x_0)}|du+[A,u]+ua|^2\leq\eps_0.
	\]
	For $u\in G\embedto\SO(r)$,
	\[
		\sup_{t\in[t_0-r^2,t_0)}\int_{D_r(x_0)}|[A,u]+ua|^2\leq C\sup_{t\in[t_0-r^2,t_0)}\int_{D_r(x_0)}|u|^2\leq Cr_0^2,
	\]
	Thus one can take $r_0$ small enough such that
	\begin{eq}\label{eq:pure_eng_small}
		\sup_{t\in[t_0-r^2,t_0)}\int_{ D_r(x_0)}|\nabla u|^2\leq 2\eps_0. 
	\end{eq}

	Note also that $u\in G$, and $A_k$, $\pt_kA_k$, $a_k$, $\pt_ka_k$ are all bounded, we can rewrite the scaled equation of \eqref{eq:eps_eng:loc_meq} as
	\begin{eq}\label{eq:loc_form_u}
		\heatop{u}=-\II(u)(\nabla u,\nabla u)+\ga\nabla u+\beta,\quad (x,t)\in P_1\eqdef D_1\times[-1,0), 
	\end{eq}
	where $\ga,\gb\in L^\infty$, $\II(u)$ is the second fundamental form of $G\embedto \R^{r\times r}$ at $u$ and $\nabla u$ is the gradient of $u$.

	Consider a cutoff function $\phi(x,t)$ such that
	\[
		\phi(x,t)=\begin{cases}
			1,&(x,t)\in  P_{1/2}\\
			0,&(x,t)\not\in  P_{1}.
		\end{cases}
	\]
	Let $v=\phi u$, by \eqref{eq:loc_form_u}, the equation of $v$ is
	\[
		\heatop{v}=-\phi\II(u)(\nabla u,\nabla u)+(\ga\phi-2\nabla\phi)\nabla u+\phi \beta+u\heatop{\phi},
	\]
	which is still of the form
	\begin{eq}\label{eq:eps_eng:loc_form_u_cutoff}
		\heatop{v}=-\phi\II(u)(\nabla u,\nabla u)+\ga\nabla u+\beta,\quad (x,t)\in P_1.
	\end{eq}

	Next, we will derive the boundary condition in both the interior and the boundary cases. It is clear that in the interior case, the initial-boundary condition is Dirichlet type
	\begin{eq}\label{eq:eps_eng:loc_bdr_interior}
		\begin{cases}
			v(x,-1)=0,&x\in B_1\\
			v(x,t)=0,&(x,t)\in\pt B_1\times[-1,0].
		\end{cases}
	\end{eq}
	While in the boundary case, by \eqref{eq:loc_bdy}, it is the following mixed type
	\[
		\begin{cases}
			v(x,-1)=0, & x\in  D_1 \\
			\phi\frac{\pt v}{\pt \nu}=-\nu\lh([A,v]+va)\phi+\frac{\pt\phi}{\pt\nu}v, & (x,t)\in H_1\times [-1,0] \\
			v(x,t)=0, & (x,t)\in(\pt D_1\cap \Sigma)\times[-1,0],
		\end{cases}
	\]
	where $H_1=\overline{B_{1}^+(0)}\cap\pt\Sigma$.

	To proceed the bad term $\frac{\pt\phi}{\pt\nu}v$, we modify $\phi$ such that $\frac{\pt\phi}{\pt\nu}\equiv0$ on $H_1\times[-1,0]$. In fact, it is easy to construct a cutoff function $\phi_1$ on the half ball $B_1^+$, such that
	\[
		\phi_1(r)=\begin{cases}
			1,&r=|x|< 1/2,\\
			0,&r=|x|>1.
		\end{cases}
	\]
	Then we can take $\phi=\phi_1(r)\phi_2(t)$, where $\phi_2(t)$ is a cutoff function
	\[
		\phi_1(t)=\begin{cases}
			1,&t\in[-1/2,0]\\
			0,&t< -1.
		\end{cases}
	\]
	Thus in the boundary case, the initial boundary condition can be written as
	\begin{eq}\label{eq:eps_eng:loc_bdr_boundary}
		\begin{cases}
			v(x,-1)=0, & x\in D_1 \\
			\vppt{v}{\nu}=-\nu\lh([A,v]+va), & (x,t)\in H_1\times [-1,0] \\
			v(x,t)=0, & (x,t)\in(\pt D_1\cap\Sigma)\times[-1,0].
		\end{cases}\tag{\ref{eq:eps_eng:loc_bdr_interior}'}
	\end{eq}

	Firstly, for $p=2$, \eqref{eq:eps_eng:loc_form_u_cutoff}, \eqref{eq:eps_eng:loc_bdr_interior} and the global $L^p$-estimate of Dirichlet problem on $ P_1$ implies,
	\begin{equation}\label{eq:epsreg:global_lp}
		\|\phi u\|_{W^{2,1}_p( P_1)}\leq C\norm{\phi^{1/2}|\nabla u|}_{L^{2p}( P_1)}^2+C,
	\end{equation}
	since $\|\ga\nabla u+\beta\|_{L^p( P_1)}$ is bounded.

	We claim that the same estimate holds for \eqref{eq:eps_eng:loc_form_u_cutoff} with boundary condition \eqref{eq:eps_eng:loc_bdr_boundary}. We first transform the oblique derivative boundary condition \eqref{eq:eps_eng:loc_bdr_boundary} into a homogeneous Neumann condition by multiplying the solution by a known function and then we use the reflection technique so that the required estimate follows as interior case. In fact, let us parameterize $ B_1^+$ as $(y,r)$, where $r$ is the distance from $x$ to $H_1$, and $y$ is the coordinate on $H_1$. Set $\tilde v=g_1vg_2$, where $g_1=\exp(-rA(\nu))\in C^\infty(\bar B_1^+,G)$, $g_2=\exp(-r(a(\nu)-A(\nu)))\in C^\infty(\bar B_1^+,G)$. The special choice of $g_1$ and $g_2$ is to make sure that on the boundary $H_1$,
	\[\begin{cases}
			\vppt{g_1}{\nu}=-\vppt{g_1}{r}=A(\nu)\\
			g_1=\id,
		\end{cases}\text{ and }\quad
		\begin{cases}
			\vppt{g_2}{\nu}=-\vppt{g_2}{r}=a(\nu)-A(\nu)\\
			g_2=\id,
		\end{cases}
	\]
	which implies that the boundary condition in \eqref{eq:eps_eng:loc_bdr_boundary} becomes
	\[\begin{cases}
			\tilde v(x,-1)=0,&x\in D_1\\
			\vppt{\tilde v}{\nu}=0,&\text{ on } H_1\times[-1,0]\\
			\tilde v(x,t)=0,&\text{ on }(\pt D_1\cap\Sigma)\times[-1,0].
	\end{cases}\]
	Next, we compute the equation satisfied by $\tilde v$. We shall show that it satisfies a similar equation to \eqref{eq:eps_eng:loc_form_u_cutoff} with possibly different $\alpha$ and $\beta$. In what follows, any function of the form $\alpha\nabla u+\beta$ is called a lower order term. Since $g_1$ and $g_2$ are known smooth functions, we have
	\[
		\Delta\tilde v=g_1\Delta vg_2+l.o.t.
	\]
	Thus, the equation of $\tilde v$ is
	\begin{eq}\label{eq:eps_eng:loc_form_u_var}
		\vppt{\tilde v}{t}-\Delta\tilde v
		=-g_1\phi\II(u)(\nabla u,\nabla u)g_2+\tilde\alpha\nabla u+\tilde\beta.
	\end{eq}
	Now, we define $\hat v$ be the reflection of $\tilde v$ with respect to $H_1$, i.e.,
	\[
		\hat v(x,t)=\begin{cases}
			\tilde v(x,t),&x\in B_1^+\times[-1,0]\\
			\tilde v(-x,t), &x\in (B_1\setminus B_1^+)\times[-1,0].
		\end{cases}
	\]
	Then, it's easy to verify that $\hat v$ is a weak solution of \eqref{eq:eps_eng:loc_form_u_var} on $B_1$, with Dirichlet initial-boundary condition on $\pt B_1$. Since $v=\phi u=g_1^{-1}\tilde v g_2^{-1}$ and $g_1^{-1}$, $g_2^{-1}$ are two smooth functions, the estimate \eqref{eq:epsreg:global_lp} holds as the interior case.

	Next, we claim that
	\begin{eq}
		\int_{ P_1}|\phi^{1/2}\nabla u|^{2p}\leq\eps\int_{ P_1}|D^2(\phi u)|^p+C.
	\end{eq}
	Note that
	\[
		|D^2(\phi u)|\sim\phi|D^2u|+|D^2\phi u|+|\nabla\phi\nabla u|,
	\]
	and the space-time $L^p$ norm of last two terms are bounded, we only need to show that
	\begin{eq}
		\int_{ P_1}|\phi^{1/2}\nabla u|^{2p}\leq\eps\int_{ P_1}|\phi D^2u|^p+C.
	\end{eq}
	It is a consequence of the following ``fix $t$'' version
	\begin{eq}
		\label{eq:eps_eng:fix_t}
		\int_{ D_1}\psi|\nabla u|^{2p}\leq\eps\int_{ D_1}\psi|D^2u|^p+C,
	\end{eq}
	where $\psi=[\phi(t)]^{p}$ is a cut-off in space, and we may assume it depending only on $r$.

	Since $\psi$ can be approximated by step functions, \eqref{eq:eps_eng:fix_t} can be further reduced to
	\[
		\int_{ D_R}|\nabla u|^{2p}\leq\eps\int_{ D_R}|D^2u|^p+C, \quad\forall R\in[1/2,1].
	\]
	This is a consequence of the following Galiardo-Nirenberg inequality \cite{Nirenberg1966extended}*{Thm.~1}
	\[
		\|D^j w\|_p\leq C_1\|D^mw\|_r^a\|w\|_q^{1-a}+C_2\|w\|_q,
	\]
	for any domain in $\R^n$ with cone property, where the indices satisfy
	\[
		\frac{1}{p}=\frac{j}{n}+a\bpar{\frac{1}{r}-\frac{m}{n}}+(1-a)\frac{1}{q},\quad
		0\leq j\leq m,\quad
		\frac{j}{m}\leq a\leq 1.
	\]
	Taking $w=|\nabla u|$, and
	\[
		p=2p,\quad r=p,\quad j=0,\quad n=2,\quad m=1,\quad a=1/2,\quad q=2,
	\]
	one obtains
	\begin{align*}
		\|\nabla u\|_{2p; D_R} & \leq C_1\|D^2u\|_{p; D_R}^{1/2}\|\nabla u\|_{2; D_R}^{1/2}+C_2\|\nabla u\|_{2; D_R} \\
		& \leq C\eps_0^{1/2}\|D^2u\|_{p; D_R}^{1/2}+C.
	\end{align*}
	Taking $2p$ power, we obtain the claim in case $p=2$. 

	In conclusion, \eqref{eq:epsreg:global_lp} and \eqref{eq:eps_eng:fix_t} implies that
	\begin{equation}
		\|u\|_{W^{2,1}_2(P_{1/2})}\leq C
		\label{eq:epsreg:interior_lp}
	\end{equation}

	Lastly, we will bootstrap the a prior estimate \eqref{eq:epsreg:interior_lp}. By Sobolev embedding, 
	\[
		\|\nabla u\|_{L^4( P_{1/2})}\leq C\|u\|_{W^{2,1}_2( P_{1/2})}\leq C.
	\]
	Thus one can go through the above steps with $p=4$ to bound $L^q$ norm of $|\nabla u|$, for any $q>4$.

	Repeating the above argument once again, one knows that $|\nabla u|$ is in H\"older space. The higher norm estimate follows from standard theory.
\end{proof}
\subsection{Removable of Singularity}
The removable of singularity of harmonic maps states that a smooth harmonic map defined on a punctured disk can be smoothly extended to be a smooth harmonic map on the disk as long as the energy is finite (see~\cite{SacksUhlenbeck1981existence}*{Thm.~3.6}).

Note that in the local expression \eqref{eq:loc_u}, $u_{|k} u^{-1}u_{|k}$ is just the second fundamental form of $G\embedto M_{r\times r}\cong\R^{r\times r}$ and $u\in G$, $A_k$, $\pt_kA_k$, $a_k$, $\pt_ka_k$ are all in $L^\infty(\Sigma)$, thus we can adapt the removable of singularity of \cite[Thm.~1]{LiWang2006Bubbling} to our situation to obtain the following
\begin{lem}[Removable of singularity]\label{lem:rmv_sing}
	Let $B_1$ be the unit disc in $\R^2$ and $u:B_1\setminus\set{0}\to G$ be a $W^{2,2}_{loc}$-map with finite energy satisfying \eqref{eq:loc_u}, which can be rewritten into the following equation
	\[
		\tau(u)\eqdef\Delta u-\II(u)(\nabla u,\nabla u)=\ga\nabla u+\beta,
	\]
	where $\ga\in L^\infty(B_1,\R^2)$, and $\beta\in L^p(B_1,TG)$ for some $p>2$, then $u$ extends to a map $\tilde u\in W^{2,p}(B_1,G)$.
\end{lem}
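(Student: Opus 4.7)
The plan is to remove the singularity in three passes: extend $u$ to a $W^{1,2}$ map on all of $B_1$ satisfying the PDE distributionally; upgrade this extension to a continuous map using an elliptic $\eps$-regularity argument on small annuli; then bootstrap to $W^{2,p}$ using standard elliptic $L^p$ theory in a chart of $G$. The inclusion $u\in W^{1,2}(B_1)$ is immediate: $u$ is bounded since $G$ is compact, and $\{0\}$ has zero $W^{1,2}$-capacity in $\R^2$, so the trivially-extended map has distributional gradient coinciding with the classical one. To show the equation $\Delta u=\II(u)(\nabla u,\nabla u)+\ga\nabla u+\gb$ holds in $\mathcal{D}'(B_1)$ and not just on $B_1\setminus\set{0}$, I would test against $\varphi\in C_c^\infty(B_1)$ written as $(1-\eta_\eps)\varphi+\eta_\eps\varphi$ with a logarithmic cutoff $\eta_\eps$ supported near $0$ satisfying $\int|\nabla\eta_\eps|^2\to 0$. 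The equation is valid against $(1-\eta_\eps)\varphi$ by hypothesis; the error from $\eta_\eps\varphi$ is controlled by Cauchy-Schwarz against $(\int|\nabla u|^2)^{1/2}(\int|\nabla\eta_\eps|^2)^{1/2}$ together with absolute continuity of the $L^1$ integrand $\II(u)(\nabla u,\nabla u)+\ga\nabla u+\gb$ on the shrinking support, both of which vanish as $\eps\to 0$.

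For continuity at the origin, set $\eps(r):=\int_{B_{2r}\setminus B_{r/2}}|\nabla u|^2$, which tends to $0$ as $r\to 0$ by finite energy. Rescale $u_r(x):=u(rx)$ on the fixed annulus $B_{3/2}\setminus B_{2/3}$; the rescaled equation is $\Delta u_r=\II(u_r)(\nabla u_r,\nabla u_r)+r\,\ga(rx)\nabla u_r+r^2\gb(rx)$, whose energy is $\eps(r)$ and whose lower-order coefficients shrink uniformly. Applying the elliptic analogue of the $\eps$-regularity (Thm~\ref{thm:epsreg} in its stationary form, which follows by the same $W^{2,p}$-estimate plus Gagliardo-Nirenberg argument without a time derivative, exactly as in \cite{LiWang2006Bubbling}) gives $\sup_{B_{5/4}\setminus B_{3/4}}|\nabla u_r|\leq C\eps(r)^{1/2}+Cr^\ga$ for some $\ga>0$. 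Scaling back yields $|\nabla u|(x)\leq C|x|^{-1}\bpar{\eps(|x|)^{1/2}+|x|^\ga}$ and hence $\osc_{B_r\setminus B_{r/2}}u\leq C\bpar{\eps(r)^{1/2}+r^\ga}$. Summing the dyadic annuli $r_k=2^{-k}$ gives $\osc_{B_{r_k}}u\to 0$, so $u$ has a well-defined continuous extension $\tilde u(0)\eqdef\lim_{x\to 0}u(x)$.

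With $u$ continuous at $0$, choose a radius $\rho$ so small that $u(B_\rho)$ lies in a single coordinate chart of $G$ around $\tilde u(0)$; in this chart $\II(u)(\nabla u,\nabla u)$ becomes a smooth quadratic form in $\nabla u$. The same elliptic $\eps$-regularity argument then upgrades $u$ to $C^{1,\ga}$ on $B_{\rho/2}$, turning the right-hand side of the PDE into an $L^p$ function, after which the standard interior $L^p$-estimate for $\Delta$ delivers $\tilde u\in W^{2,p}(B_{\rho/2},G)$; away from the origin $u$ was already in $W^{2,p}_{loc}$ by a parallel bootstrap of the $W^{2,2}_{loc}$ hypothesis, so the two estimates patch to give $\tilde u\in W^{2,p}(B_1,G)$. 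The main obstacle is the middle step: the critical term $\II(u)(\nabla u,\nabla u)$ is only $L^1$ a priori, so both the elliptic $\eps$-regularity and the verification that the subcritical perturbations $\ga\nabla u$ and $\gb$ do not spoil it under rescaling have to be handled carefully. Once continuity at $0$ is in hand, everything downstream is a routine bootstrap.
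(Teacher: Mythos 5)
The paper does not prove this lemma by a self-contained argument; it observes that the nonlinearity and coefficients in \eqref{eq:loc_u} match the hypotheses of \cite{LiWang2006Bubbling}*{Thm.~1} and simply invokes that result. Your sketch is thus more ambitious, and its first and third steps are sound: removing the origin across the $W^{1,2}$ extension via a logarithmic cut-off (zero $H^1$-capacity of a point in $\R^2$) is correct, and the bootstrap from continuity to $W^{2,p}$ in a single chart of $G$ is routine.

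The gap is in the middle step. From the annular $\eps$-regularity you get, with $\eps(r)\eqdef\int_{B_{2r}\setminus B_{r/2}}|\nabla u|^2$, the bound $\osc_{B_{r_k}\setminus B_{r_{k+1}}}u\leq C\bpar{\eps(r_k)^{1/2}+r_k^{\mu}}$ for some $\mu>0$ on each dyadic annulus. To conclude $\osc_{B_{r_k}\setminus\set{0}}u\to0$ you need the \emph{tail sums} $\sum_{j\geq k}\bpar{\eps(r_j)^{1/2}+r_j^{\mu}}\to0$, i.e.\ summability of $\eps(2^{-k})^{1/2}$. Finiteness of the total energy only gives $\sum_k\eps(2^{-k})<\infty$ and $\eps(r)\to0$; taking $\eps(2^{-k})\sim k^{-2}$ shows the square-root series can diverge, and likewise $\int_0^r\eps(s)^{1/2}s^{-1}\,\rd s$, which controls the oscillation along rays, need not be finite. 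What is actually required is a genuine algebraic energy decay $\int_{B_r}|\nabla u|^2\leq Cr^{2\mu}$, which then makes $\eps(r_j)^{1/2}$ geometrically summable. Obtaining this decay is the heart of the removable-singularity theorem: for harmonic maps one compares the radial and angular energies on circles $\pt B_r$ via the Pohozaev/stress-energy identity (equivalently, the near-holomorphicity of the Hopf differential), combines with the Wirtinger inequality on $\pt B_r$, and derives a differential inequality for $E(r)=\int_{B_r}|\nabla u|^2$ forcing algebraic decay; the subcritical perturbation $\ga\nabla u+\gb$ is absorbed since $\ga\in L^\infty$, $\gb\in L^p$ with $p>2$. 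This is precisely the content of \cite{SacksUhlenbeck1981existence}*{Thm.~3.6} and its adaptation in \cite{LiWang2006Bubbling}*{Thm.~1}; you must supply (or cite) that decay estimate before the dyadic oscillation argument can close.
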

In the real application, we also need to remove boundary singularities. Since our equation is conformal invariant, we can assume that the boundary neighborhood is a half-disk. Then the boundary case can be reduced to interior one by reflection as in the proof of $\eps$-regularity (see Thm.~\ref{thm:epsreg}).
\begin{cor}[Removable of boundary singularity]\label{cor:rmv_bdy_sing}
	Let $ B_1^+$ be the upper half-disk centered at $0$, $H_1\eqdef B_1^+\cap \set{x=(x_1,x_2)\in\R^2|x_1=0}$. Recall that $A, a\in C^\infty( B_1^+,\g\otimes T^* B_1^+)$ and $\g$ be the Lie algebra of $G$. Suppose $u: B_1^+\setminus\set{0}\to G$ is a $W^{2,2}_{loc}$-map with finite energy. If $u$ satisfies
	\[\begin{cases}
			\tau(u):=\Delta u-\II(u)(\nabla u,\nabla u)=\ga\nabla u+\beta,&x\in B_1^+\\
			\vppt{u}{\nu}+[A(\nu),u]+ua(\nu)=0,&x\in H_1,
	\end{cases}\]
	where $\ga\in L^\infty( B_1^+,\R^2)$, $\beta\in L^p( B_1^+,TG)$ for some $p>2$ and  $\nu$ is the unit outer normal of $H_1$, then $u$ extends to a map $\tilde u\in W^{2,p}( B_1^+,G)$.
\end{cor}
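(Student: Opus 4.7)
The strategy, in line with the hint preceding the statement, is to reduce to the interior removable singularity result (Lemma~\ref{lem:rmv_sing}) by first straightening the oblique boundary condition into a homogeneous Neumann condition and then reflecting across $H_1$; the conformal invariance has already localized the problem to the upper half-disk. For Step~1, parametrize a neighborhood of $H_1$ by $(y,r)$ with $r=\dist(\cdot,H_1)$ and set
\[
g_1(x)\eqdef\exp\bpar{-rA(\nu)},\qquad g_2(x)\eqdef\exp\bpar{-r\bpar{a(\nu)-A(\nu)}},
\]
which are smooth $G$-valued functions on $\overline{B_1^+}$. Define $\tilde u\eqdef g_1\,u\,g_2$; since $G$ is a group, $\tilde u:B_1^+\setminus\set{0}\to G$. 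Exactly as in the proof of Theorem~\ref{thm:epsreg}, the specific choice of $g_1,g_2$ yields $g_i|_{H_1}=\id$ together with $\pt_\nu g_1|_{H_1}=A(\nu)$ and $\pt_\nu g_2|_{H_1}=a(\nu)-A(\nu)$, so a direct differentiation on $H_1$ gives
\[
\pt_\nu\tilde u\big|_{H_1}=[A(\nu),u]+\pt_\nu u+u\,a(\nu)=0
\]
by hypothesis. Because $g_1,g_2$ are smooth and bounded, $\tilde u$ remains in $W^{2,2}_{loc}$, has finite energy, and satisfies an interior equation of the same shape $\tau(\tilde u)=\tilde\ga\nabla\tilde u+\tilde\gb$ with $\tilde\ga\in L^\infty$ and $\tilde\gb\in L^p$.

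For Step~2, let $\hat u$ be the even reflection of $\tilde u$ across $H_1$,
\[
\hat u(x_1,x_2)\eqdef\begin{cases}\tilde u(x_1,x_2),&x_1\geq 0,\\ \tilde u(-x_1,x_2),&x_1<0,\end{cases}
\]
which takes values in $G$ on $B_1\setminus\set{0}$. The vanishing normal derivative on $H_1$ established above ensures $\hat u\in W^{2,2}_{loc}(B_1\setminus\set{0},G)$. Reflecting the coefficients $\tilde\ga,\tilde\gb$ appropriately (even in the components tangential to $H_1$, odd in the normal component) one obtains $\hat\ga\in L^\infty(B_1,\R^2)$ and $\hat\gb\in L^p(B_1,TG)$, and $\hat u$ satisfies $\tau(\hat u)=\hat\ga\nabla\hat u+\hat\gb$ weakly on $B_1\setminus\set{0}$, with energy at most twice that of $\tilde u$ and hence finite. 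Lemma~\ref{lem:rmv_sing} then applies and produces $\hat u\in W^{2,p}(B_1,G)$. Restricting to $B_1^+$ and multiplying by the smooth $G$-valued $g_1^{-1},g_2^{-1}$ yields
\[
u=g_1^{-1}\tilde u\,g_2^{-1}\in W^{2,p}(B_1^+,G),
\]
which is the claim.

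The main obstacle is Step~2: verifying that the reflected map $\hat u$ satisfies its PDE \emph{in the weak sense across} $H_1$, with no distributional contribution supported on the hyperplane. This is precisely where the homogeneous Neumann condition on $\tilde u$ is essential, since it guarantees that the full gradient is continuous across $H_1$, so that the second distributional derivatives of $\hat u$ have no singular part along $H_1$ and testing against $C^\infty_c(B_1\setminus\set{0})$ functions produces only the reflected interior contributions with no boundary integral. Some bookkeeping is also needed to extend the coefficients $A,a$, which live only on $\overline{B_1^+}$, to $B_1$ so that $\hat\ga,\hat\gb$ land in the classes required by Lemma~\ref{lem:rmv_sing}; a componentwise even/odd reflection suffices. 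Once these points are checked, the interior removable singularity lemma closes the argument.
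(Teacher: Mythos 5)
Your proposal is correct and follows exactly the route the paper intends: the paper's own ``proof'' is just the remark that the boundary case reduces to Lemma~\ref{lem:rmv_sing} by the reflection technique from the proof of Theorem~\ref{thm:epsreg}, i.e.\ conjugating by $g_1=\exp(-rA(\nu))$ and $g_2=\exp(-r(a(\nu)-A(\nu)))$ to obtain a homogeneous Neumann condition and then reflecting evenly across $H_1$. You have supplied more detail than the paper does (in particular the verification that the Neumann condition makes the even reflection a genuine $W^{2,2}_{loc}$ weak solution across $H_1$), and the argument is sound.
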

\section{Flow up to the First Singular Time}
\label{sec:blw}
As in the case of the harmonic map flow, the flow \eqref{eq:mflw} may develop finite time singularity and the energy concentration is the cause of the singularities. In \autoref{subsec:blwcrt}, we show that as a consequence of the local energy inequality (see Prop.~\ref{prop:local_eng_ineq}) and the $\eps$-regularity (see Thm.~\ref{thm:epsreg}), away from finitely many energy concentration points, the solution converges smoothly up to the first singular time, which is (ii) of Theorem~\ref{mthm:A}. Then, we use a time-slice version of $\eps$-regularity (see Prop.~\ref{prop:timeslice_eps_reg}) to study the blow-up of the solution. In contrast to \cite{Qing1995singularities}, we are able to study the blow-up for any time sequence $t_i\to T$. Finally in \autoref{subsec:nonbb}, we show that the bubble obtained here is a harmonic sphere in $G$ and that due to the special boundary condition, there is no boundary bubble, which proves (iii) of Theorem~\ref{mthm:A}.
\begin{rmk}
	Since (i) is proved in \autoref{sec:local_existence} and (iv) follows trivially from the discussion in \autoref{sec:separation_bubble}, we shall complete the proof of Theorem \ref{mthm:A} in this section.
\end{rmk}
\subsection{The Blow-up Criterion and Finiteness of Singularities}\label{subsec:blwcrt}
We have already shown that the gauge transformation flow \eqref{eq:mflw} with smooth initial data will exist for a short time. Let $[0,T_1)$ 
be the maximal existence interval, then we call that a point $x_0\in \Sigma$ is a \emph{singular} or \emph{energy concentrate} point at time $T_1$ and $T_1$ is called a \emph{singular time} correspondingly, if
\[
	\lim_{R\to0}\limsup_{t\uparrow T_1}\int_{D_R(x_0)}e(S(t))\geq\eps_0.
\]
Otherwise it will be called a \emph{regular} point at time $T_1$. Here $\eps_0$ is the constant in $\eps$-regularity (see Thm.~\ref{thm:epsreg}). A general principle is that if $T_1$ is maximal then there exists at least one singular point at time $T_1$, the argument is based on $\eps$-regularity and standard bootstrap technology.

Next, we will show the finiteness of singularities at $T_1$. Let $\S(S,T_1)$ be the singular set at $T_1$ which is defined by
\[
	\S(S,T_1)=\bigcap_{r>0}\set{x\in \Sigma|\limsup_{t\uparrow T_1}\eng(S(t);D_r(x))\geq\eps_0}.
\]
For any subset $\set{x_i}_{i=1}^N$ of $\S(S,T_1)$,
\[
	\limsup_{t\uparrow T_1}\eng(S(t);D_r(x_i))\geq\eps_0,\quad\forall 1\leq i\leq N,\forall r>0.
\]
We can always choose $r$ small enough, such that $\set{D_{2r}(x_i)}_{i=1}^N$ are mutually disjoint, then the local energy inequality (see Prop.~\ref{prop:local_eng_ineq}) shows that
\begin{align*}
	N\eps_0 & \leq\sum_{i=1}^N\limsup_{t\uparrow T_1}\eng(S(t);D_r(x_i)) \\
	& \leq\sum_{i=1}^N\left(\eng(S(\tau);D_{2r}(x_i))+\limsup_{t\uparrow T_1}C\frac{|t-\tau|}{r^2}\eng(S_0;\Sigma)\right) \\
	& \leq\eng\left(S(\tau);\cup_{i=1}^ND_{2r}(x_i)\right)+\frac{N}{2}\eps_0 \\
	& \leq\eng(S_0;\Sigma)+\frac{N}{2}\eps_0,
\end{align*}
for any $\tau\in\big[T_1-\frac{\eps_0r^2}{2C\eng(S_0;\Sigma)},T_1\big)$. 
Therefore $N\leq 2\eng(S_0;\Sigma)/\eps_0$.

Finally, if $x^0\in \Sigma\setminus \S(S,T_1)$, then $S$ is smooth at $(x^0,T_1)$. In fact, since $x^0$ is a regular point at time $T_1$, there exists $r>0$ such that
\[
	\limsup_{t\uparrow T_1}\eng(S(t);D_{2r}(x^0))< \eps_0.
\]
Thus, there exists $r^0\in(0,\min\set{r,r_0}]$, where $r_0$ is the constant in $\eps$-regularity (see Thm.~\ref{thm:epsreg}), such that
\[
	\sup_{[T_1-(r^0)^2,T_1)}\eng(S(t);D_{r^0}(x^0))< \eps_0.  
\]
The $\eps$-regularity implies there exists a weak limit $S(\cdot, T_1)$, which is smooth away from $\S(S,T_1)$. This proves the assertion \eqref{lab:2_mthm:A} of Theorem~\ref{mthm:A}.
\subsection{Time Slice \veps-Regularity}
Applying the \emph{inverse local energy inequality} (see Prop.~\ref{prop:local_eng_ineq}), one can show that the smallness of energy at a given time-slice still small in a short time interval. Thus, one can derive a time-slice version of $\eps$-regularity from the parabolic one. We will show in a moment that, around each singular point, for \emph{any} $t_i\nearrow T_1$, one can scale the local expression of $S$ properly to satisfy the time-slice $\eps$-regularity, which will turn out to be harmonic sphere into $G$.

\begin{prop}[Time-slice $\eps$-regularity]\label{prop:timeslice_eps_reg}
Suppose $S$ is a maximal solution of \eqref{eq:mflw} with initial energy $\eng(S_0;\Sigma)< +\infty$, then there are some $\eps^0>0$ and $T'< T_1$, which depends on the particular solution, such that for any $t\in(T',T_1]$, it holds the following \emph{time-slice $\eps$-regularity}: If
\begin{eq}\label{eq:small_energy_timeslice}
	\eng(S(t);D_r(x))\leq\eps^0,
\end{eq}
for some $r< \min\set{r_0,\sqrt{T_1-T'},i_\Sigma}$, here $r_0$ (also the subsequent constant $\eps_0$) are the one as in $\eps$-regularity (see Thm.~\ref{thm:epsreg}), then there exist some $\delta_0>0$, which depends only on the initial energy and $\eps_0$, such that, for some constant $C_k$,
\[
	\sup_{\tilde D_{\delta_0r}(x)}\delta_0^kr^k|\nabla^k_{A} S|\leq C_k,\quad\forall k\geq1,
\]
where we use a tilde to distinguish the scaled set of $D_r(x)$, that is $\tilde D_{\delta_0 r}(x)\eqdef\set{\delta_0y|y\in D_r(x)}$.
\end{prop}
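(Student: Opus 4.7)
\textbf{Proof proposal for Proposition \ref{prop:timeslice_eps_reg}.} The plan is to reduce the time-slice statement to the already-established parabolic $\eps$-regularity (Thm.~\ref{thm:epsreg}) by propagating the smallness of energy from a single time $t$ backwards in time into a short parabolic cylinder, using the reverse local energy inequality from Prop.~\ref{prop:local_eng_ineq}. The key input that makes this possible is the gradient-flow structure proved in Prop.~\ref{prop:evolution_of_energy}: integrating $\frac{d}{dt}\eng(S(t))=-\int_\Sigma|S^{-1}\partial_t S|^2$ in time shows
\[
\int_0^{T_1}\int_\Sigma|\partial_t S|^2\,\leq\,\eng(S_0;\Sigma)<+\infty,
\]
so for any prescribed $\eta>0$ there is some $T'<T_1$ with $\int_{T'}^{T_1}\!\!\int_\Sigma|\partial_t S|^2<\eta$. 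This lets us make the ``error term'' in the reverse local energy inequality arbitrarily small by shrinking $T_1-T'$.

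With this in hand, fix the parabolic constant $\eps_0$ and radius $r_0$ of Thm.~\ref{thm:epsreg} and set $\eps^{0}\eqdef\eps_0/3$. Choose $T'<T_1$ so that $2\int_{T'}^{T_1}\!\!\int_\Sigma|\partial_t S|^2\leq\eps_0/3$, and choose $\delta_0\in(0,1/2)$ so small that $C\delta_0^{2}(1-\delta_0)^{-2}\eng(S_0;\Sigma)\leq\eps_0/3$, where $C$ is the constant from Prop.~\ref{prop:local_eng_ineq}. Now suppose $t\in(T',T_1]$, $r<\min\{r_0,\sqrt{T_1-T'},i_\Sigma\}$, $x\in\Sigma$, and that the hypothesis $\eng(S(t);D_r(x))\leq\eps^0$ holds. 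For any $\tau\in[t-(\delta_0 r)^2,t]$, apply the reverse local energy inequality with $T_1\mapsto\tau$, $T_2\mapsto t$, $R_2\mapsto\delta_0 r$ and $R_1\mapsto r$: since $|t-\tau|\leq(\delta_0 r)^2$ and $|R_1-R_2|=(1-\delta_0)r$, we obtain
\[
\eng(S(\tau);D_{\delta_0 r}(x))\,\leq\,\eng(S(t);D_r(x))+2\!\int_{\tau}^{t}\!\!\int_\Sigma|\partial_t S|^2+C\frac{\delta_0^2\,r^2}{(1-\delta_0)^2 r^2}\eng(S_0;\Sigma)\,\leq\,\eps_0.
\]
Thus the parabolic smallness condition in Thm.~\ref{thm:epsreg} holds on the parabolic cylinder based at $z_0=(x,t)$ with radius $\delta_0 r$.

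Applying Thm.~\ref{thm:epsreg} with that radius gives
\[
\sup_{P_{\delta_0 r/2}(x,t)}|\nabla_A^k S|\,\leq\,C_k(\delta_0 r)^{-k},\qquad k\geq 1,
\]
which, after absorbing the factor $2^{k}$ into $C_k$ and restricting to the time-slice $\{t\}$, yields $\sup_{D_{\delta_0 r/2}(x)}(\delta_0 r)^{k}|\nabla_A^{k}S|\leq C_k$, equivalently the stated scaled estimate on $\tilde D_{\delta_0 r}(x)$ after the obvious rescaling that underlies the tilde notation. The main obstacle, and the only place where quantitative care is required, is the coupling of the three smallness budgets (the hypothesis $\eps^0$, the accumulated $L^2$-norm of $\partial_t S$ on $[T',T_1]$, and the geometric error $C\delta_0^2\eng(S_0)$) so that their sum stays under the parabolic $\eps_0$; once these constants are fixed by the initial energy and by $\eps_0$, the argument is mechanical. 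Note that $T'$ and $\delta_0$ do depend on the particular solution via $\eng(S_0)$ and via how fast the Dirichlet energy is dissipated near $T_1$, exactly as asserted in the statement.
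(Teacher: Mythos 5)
Your argument is correct and follows essentially the same route as the paper's proof: pick $T'$ near $T_1$ so the accumulated $\int|\partial_t S|^2$ is small, pick $\delta_0$ small so the geometric term $C|T_2-T_1|/|R_1-R_2|^2$ in the reverse local energy inequality is small, and propagate the time-slice smallness back in time to meet the parabolic hypothesis of Thm.~\ref{thm:epsreg}. The only differences from the paper are cosmetic (the paper applies the reverse inequality with $R_2=r/2$ and then restricts to the cylinder of radius $2\delta_0 r$, whereas you use $R_2=\delta_0 r$ directly), and the constants bookkeeping matches.
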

\begin{proof}
	Just take $\eps^0=\eps_0/3$, monotonicity of energy along the heat flow (see Prop.~\ref{prop:evolution_of_energy}) and the absolute continuity of integration imply that there is some $T''< T_1$, such that
	\[
		\int_{T''}^{T_1}\int_\Sigma|\pt_tS|^2< \eps^0/2.
	\]
	Set $T'=\frac{T_1+T''}{2}$ and $r'^2=\frac{T_1-T''}{2}$. For any fixed $t>T'$ and $r< \min\set{r_0,r',i_\Sigma}$, the inverse local energy inequality asserts that, when $s\in[t-r^2,t]\subset[T'',T_1]$, we have
	\[
		\eng(S(s);D_{r/2}(x))\leq\eng(S(t);D_{r}(x))
		+2\int_{s}^t\int_{D_{r}(x)}|\pt_tS|^2
		+4C\frac{|t-s|}{r^2}\eng(S_0;\Sigma).
	\]
	Thus if we choose $\delta_0>0$ small ( $\delta_0< 1/4$), such that $16C\delta_0^2\eng(S_0;\Sigma)< \eps^0$, then
	\[
		\sup_{s\in[t-4\delta_0^2 r^2,t)}\eng(S(s);\tilde D_{2\delta_0 r}(x))\leq\eps_0, 
	\]
	which verifies the smallness assumption in $\eps$-regularity (see Thm.~\ref{thm:epsreg}) and the inequality follows.
\end{proof}
\subsection{Separation of Harmonic Sphere}\label{sec:separation_bubble}
As an application of $\eps$-regularity, we will show that the bubbles are harmonic maps from $S^2$ to $G$.

We restrict ourselves to the case of Euclidean metric for simplicity. Suppose that $x_0=0$ is the unique singular point in a coordinate chart $U$ at time $T_1$, locally $S(x,t)$ can be viewed as a map $u(x,t):U\times[0,T_1)\to G$. Let 
\[
	u_i(x)=u(x,t_i),\quad x\in U, t_i\nearrow T_1.
\]
Since $u_i$ has finite energy, it is clear that $u_i\in W^{1,2}(U)\cap C^\infty(U)$ and the weak compactness implies $u_i\weakto u_\infty$ in $W^{1,2}(U)$. Set
\[
	\frac{1}{\lambda_i}=\max_{\bar U}|\nabla u(x,t_i)|=|\nabla u(x_i,t_i)|,\quad x_i\in\bar U.
\]
There are two cases, according to the type of bubbles,
\begin{enumerate}
	\item Boundary: $\frac{1}{\lambda_i}\dist(x_i,U\cap\pt \Sigma)\to\rho< +\infty$;
	\item Interior: $\frac{1}{\lambda_i}\dist(x_i,U\cap\pt \Sigma)\to+\infty$.
\end{enumerate}
Clearly, $\lambda_i\to0$, $x_i\to x_0$ in both cases. When discussing the blow-up process, we pass to subsequences without explicit indications.

We only consider the first case, since the latter (simpler) one will follow with minor modification. It should be noted that the blow-up point of the latter case maybe located at the boundary, but the bubble will be defined over $\R^2$, which is called \emph{interior} bubble, contrasted to the one defined over half plane, which is called \emph{boundary} bubble.

\begin{figure}[!htbp]
	\centering
	\includegraphics{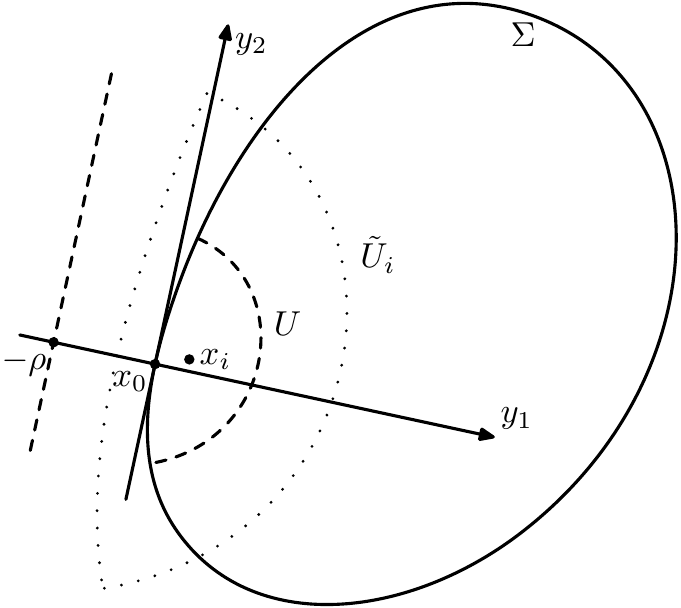}
	\caption{The Blow-up Process}
	\label{fig:blw}
\end{figure}

Firstly, one can take a good coordinate system (see~\autoref{fig:blw}), with origin at $x_0=0$, $y_1$ axis pointing to the interior of $\Sigma$ and $y_2$ axis tangent to $\pt \Sigma$ at $x_0$. Let
\begin{eq}\label{eq:scaling}
	w_i(x,t)\eqdef u(x_i+\lambda_ix,t_i+\lambda_i^2t),\quad(x,t)\in\tilde U_i\times I_i,
\end{eq}
where $\tilde U_i\eqdef\set{y\in\R^2|x_i+\lambda_i y\in U}$ and $I_i\eqdef\set{t\in\R^1|t_i+\lambda_i^2 t\in[0,T_1)}$. 
To derive the equation of $w_i$, recall that the local expression of the flow is (see \eqref{eq:loc_form_u} in the proof of $\eps$-regularity Theorem~\ref{thm:epsreg}),
\[
	\vppt{u}{t}=\Delta u-\II(u)(\nabla u,\nabla u)+\ga\nabla u+\beta,\quad (x,t)\in U\times[0,T_1). 
\]
Therefore, $w_i$ satisfies
\begin{eq}\label{eq:blowup:eq}
	\vppt{w_i}{t}=\Delta w_i-\II(w_i)(\nabla w_i,\nabla w_i)+\lambda_i\ga\nabla w_i+\lambda_i^2\beta.
\end{eq}
Under the scaling \eqref{eq:scaling}, the oblique boundary condition \eqref{eq:loc_bdy} becomes,
\begin{eq}\label{eq:blowup:bdr}
	0
	=\nu\lh(w_i^{-1}(dw_i/\lambda_i+[A,w_i])+a)|_{(x,t)},\quad
	(x,t)\in\tilde U_{\pt;i}\times I_i,
\end{eq}
where $\tilde U_{\pt;i}\eqdef\set{y\in\R^2|x_i+\lambda_i y\in\overline{U}\cap\pt\Sigma}$ is part of the boundary of $\tilde U_i$ corresponding to the boundary of $\pt\Sigma\cap \overline{U}$.

For any fixed $R>0$,
\[
	\sup_{D_R}|\nabla w_i(\cdot,0)|=\sup_{\tilde D_{\lambda_iR}(x_i)}\lambda_i|\nabla u_i|\leq 1.
\]
which implies that the time-slice small energy assumption \eqref{eq:small_energy_timeslice} in Prop.~\ref{prop:timeslice_eps_reg} is satisfied for $w_i(x,0)$, $x\in D_{R/2}$ and $r$ small. So that $w_i(\cdot,0)$ converges to $w_\infty^0(\cdot)$ smoothly locally in $\R^2_+\eqdef \set{(y_1,y_2)\in\R^2|y_1>-\rho}$. What's more, from the proof of Prop.~\ref{prop:timeslice_eps_reg}, there exists some $\delta_0>0$, such that $\sup_{[-4\delta_0^2r^2,0)
}\eng(w_i;\tilde D_{2\delta_0r}(x))< \eps_0$, for all $x\in D_{R/2}$ and $r$ small as above. Thus $\eps$-regularity (see Thm.~\ref{thm:epsreg}) implies that $w_i\to w_\infty$ smoothly in $P_{\delta_0R}$, and
\[
	\int_{P_{\delta_0R}}\abs{\vppt{w_\infty}{t}}^2
	=\lim_{i\to\infty}\int_{P_{\delta_0R}}\abs{\vppt{w_i}{t}}^2
	=\lim_{i\to\infty}\int_{P_{\lambda_i\delta_0R}(x_i,t_i)}
	\abs{\vppt{u}{t}}^2=0,
\]
where the last equality follows from absolute continuity of integration and
\[
	\int_0^{T_1}\int_\Sigma\abs{\vppt{S}{t}}^2< +\infty.
\]
Therefore, $w_\infty^0(\cdot)=w_\infty(\cdot,0)$ is a harmonic map from $\R^2_+$ to $G$ with finite energy. In fact $|\nabla w_i(\cdot,0)|_{D_R}\leq|\nabla\omega_i(0,0)|=1$ and $\ga\in L^\infty(D_R)$, $\gb\in L^\infty(D_R)$ clearly implies all the lower terms of $w_i$ in \eqref{eq:blowup:eq} will vanish as $i\to\infty$. By \eqref{eq:blowup:bdr}, the boundary condition of $w_\infty^0$ is the following Neumann type
\[
	\frac{\pt w_\infty}{\pt\nu}\eqdef\nu\lh dw_\infty=\lim_{i\to\infty}\nu\lh dw_i=-\lim_{i\to\infty}\lambda_i(\nu\lh [w_i,A]+w_ia)=0.
\]
Thus, one can extend $\omega_\infty^0$ to $\R^2$ by reflection. It is still a harmonic map by the regularity of $C^1$ harmonic maps. The removable of singularity of harmonic maps (see \cite{SacksUhlenbeck1981existence}*{Thm.~3.6}) implies that $\omega_\infty^0$ is a bubble, that is, a non-constant harmonic map from $S^2$ to $G$.
\subsection{Non-existence of Boundary Bubbles}\label{subsec:nonbb}
One can further rule out the boundary bubbles. In fact, $w_\infty^0$ is a non-constant harmonic map from $S^2$ to $G$. Let $\Phi$ be the corresponding holomorphic quadratic differential, it vanishes identically on $S^2$ (see, for example \cite[p.~500, Lem.~9.15]{Jost2011Riemannian}). In particular, this implies that $w_\infty^0$ is weakly conformal, i.e., $(w_\infty^0)^*h_N=\lambda(x)g_{S^2}$, $\lambda\geq0$. Since for boundary bubble, one has $\frac{\pt w_\infty^0}{\pt\nu}=0$ on the equator and so $\lambda(x)g_{S^2}(\nu,\nu) =h_N((w^0_\infty)_*(\nu),(w^0_\infty)_*(\nu)) =0$, thus $\lambda=0$ on the equator, which implies that the tangent derivative also vanishes along the equator. Finally, a classical theorem of harmonic maps asserts that a harmonic map with constant value on the boundary is a constant map (see, e.g. \cite[p.~503, Thm.~9.1.3]{Jost2011Riemannian}). In particular, we show that each singular point at least produce one bubble and this proves assertion \eqref{lab:3_mthm:A} of Theorem~\ref{mthm:A}. In conclusion, we finish the proof of Theorem~\ref{mthm:A}.

\section{Flow beyond Singular Times}\label{sec:gen_sol}
Instead of considering the flow starting from $W^{1,2}$ initial data as Struwe \cite{Struwe1985evolution}*{Thm.~4.2} and obtaining the \emph{global weak solution}, one can also start with smooth initial value and run the flow till the first singular time, then approximate the weak solution over singularities by smooth sections to obtain \emph{generalized solution}. Intuitively, when one is smoothing out the singularities, the energy of bubbles will also be excluded, thus the total energy should be decreasing along the flow also. However, the cut-off will also cost some energy. To ensure the energy of generalized solution be monotonically decreasing along the flow, one needs to control the cost delicately. Moreover, one should be careful about the boundary condition for boundary concentration points. 

If the solution $S(t)$ obtained in \autoref{sec:local_existence} blows up at $T_1< +\infty$, by (ii) of Theorem~\ref{mthm:A}, $S(T_1)$ is in $W^{1,2}(\Aut_GE)$. Following Struwe, one should be able to use some generalized version of \cite{SchoenUhlenbeck1982regularity}*{Lem.~3.2} to approximate $S(T_1)$ by smooth sections and study the limit of flows from these smooth sections. Here, we take a slightly different approach. More precisely, by using some oscillation estimate (see Prop.~\ref{prop:osc_estimats}), we construct the approximation explicitly (see \autoref{subsec:modify}) and restart the flow.  

The approximation depends on some parameter which we choose to be small so that the cost of energy due to the cut-off is small and the energy of the approximation is smaller (by some fixed amount) than the energy of the flow before the blow-up. The proof of Theorem~\ref{mthm:C} (see \autoref{sec:proof_mthmC}) follows by repeating the argument.

In what follows, we first show the oscillation estimate for $S(T_1)$ near the concentration points, then prove the important approximation lemma (see Lem.~\ref{mthm:B}) and finally the existence of generalized solution (see Thm.~\ref{mthm:C}) follows with little efforts.
\subsection{Oscillation Estimate around Singularity}
Although in general we do not know whether $S(T_1)$ is continuous or not near a concentration point, we do have some control over the oscillation of it in an annular region, which is important for our proof of Lemma~\ref{mthm:B}.
\begin{prop}\label{prop:osc_estimats}
	Suppose $x_0$ is an isolated singular point at time $T_1\,$ for \eqref{eq:mflw}, then for any $\eps>0$, there exists $\delta=\delta(\eps)>0$, such that
	\begin{eq}\label{eq:osc_estimates}
		\osc_{D_\delta(x_0)\setminus D_{\delta/2}(x_0)}S(T_1)\eqdef\sup_{x_1,x_2\in D_\delta(x_0)\setminus D_{\delta/2}(x_0)}|S(x_1,T_1)-S(x_2,T_1)|< \eps.
	\end{eq}
\end{prop}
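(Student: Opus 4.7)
The plan is to combine the absolute continuity of the energy integral with the time-slice \epst-regularity (Prop.~\ref{prop:timeslice_eps_reg}) to establish a pointwise gradient bound of order $1/\delta$ on the annulus $A_\delta\eqdef D_\delta(x_0)\setminus D_{\delta/2}(x_0)$; then rescaling to unit scale and combining this bound with the small $L^2$-energy via $L^\infty$--$L^2$ interpolation and Morrey's embedding will convert small energy into small oscillation.

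First I would invoke absolute continuity: since $S(T_1)\in W^{1,2}(\Sigma)$ by (ii) of Theorem~\ref{mthm:A}, the density $e(S(T_1))$ is in $L^1(\Sigma)$, so for any $\eps_1>0$ there is some $\delta_1(\eps_1)>0$ with $\eng(S(T_1);D_{2\delta}(x_0))<\eps_1$ for all $\delta\le\delta_1$. Fix $\eps_1\le\eps^0$, with $\eps^0$ the constant from Prop.~\ref{prop:timeslice_eps_reg}. For any $x\in A_\delta$, put $r_x\eqdef\dist(x,x_0)/4\in[\delta/8,\delta/4]$; the disk $D_{r_x}(x)$ avoids $x_0$ and lies in $D_{2\delta}(x_0)$, so the smallness hypothesis $\eng(S(T_1);D_{r_x}(x))<\eps^0$ holds. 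Since $t=T_1$ is allowed in Prop.~\ref{prop:timeslice_eps_reg} and the $C^\infty_{loc}$-convergence $S(\cdot,t)\to S(T_1)$ away from $x_0$ makes the limiting bound well-defined, one obtains $|\nabla S(T_1)(x)|\le C/r_x\le 8C/\delta$, giving the uniform pointwise bound
\[
	|\nabla S(T_1)|\le C/\delta\quad\text{on }A_\delta.
\]
When $x_0\in\pt\Sigma$ the same reasoning applies with half-disks in place of disks, as Prop.~\ref{prop:timeslice_eps_reg} covers the boundary case.

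Next I would rescale by setting $\tilde S(y)\eqdef S(T_1)(x_0+\delta y)$ on the unit (half-)annulus $\tilde A\eqdef\set{y:1/2<|y|<1}$ (intersected with the appropriate half-plane if $x_0\in\pt\Sigma$). Then $|\nabla\tilde S|\le C$ pointwise, and once one restricts to $\delta\le\sqrt{\eps_1}$ so as to absorb the $O(\delta^2)$ contribution from the bounded term $A-A_0$,
\[
	\int_{\tilde A}|\nabla\tilde S|^2\,dy=\int_{A_\delta}|\nabla S(T_1)|^2\,dx\le C\eps_1.
\]
For any fixed $p>2$, the pointwise estimate $|\nabla\tilde S|^p\le\|\nabla\tilde S\|_\infty^{p-2}|\nabla\tilde S|^2$ gives $\|\nabla\tilde S\|_{L^p(\tilde A)}^p\le C\eps_1$, and the Morrey embedding $W^{1,p}(\tilde A)\embedto C^{0,1-2/p}(\tilde A)$ (valid since $\tilde A$ has the cone property) yields $[\tilde S]_{C^{0,1-2/p}(\tilde A)}\le C\eps_1^{1/p}$. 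Hence
\[
	\osc_{A_\delta}S(T_1)=\osc_{\tilde A}\tilde S\le C\eps_1^{1/p}.
\]
Taking $p=4$, choosing $\eps_1=\min\set{\eps^0,(\eps/C)^4}$, and then $\delta=\min\set{\delta_1(\eps_1),\sqrt{\eps_1}}$ completes the argument.

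The main obstacle is precisely this last interpolation step: in two dimensions small $L^2$-energy alone does not control the oscillation, since $W^{1,2}\not\embedto C^0$, and so absolute continuity alone is insufficient. The role of the time-slice \epst-regularity is to provide the missing pointwise gradient bound which, after rescaling, becomes $|\nabla\tilde S|\le C$; combined with the small $L^2$-energy by the pointwise interpolation, this upgrades the integrability of $\nabla\tilde S$ past the Sobolev threshold and lets Morrey's theorem deliver a small H\"older seminorm, hence small oscillation.
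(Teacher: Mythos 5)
Your proof is correct, and it takes a genuinely different route from the paper's. The paper (following Qing \cite{Qing2003remark}) first upgrades the na\"{\i}ve $\eps$-regularity bound $|\nabla_A S(T_1)|\lesssim 1/|x-x_0|$ to the strictly better decay $|\nabla_A S(T_1)|\leq o(1)/|x-x_0|$ as $x\to x_0$; the mechanism is a contradiction argument that feeds the \emph{second}-derivative bound from time-slice $\eps$-regularity into a mean-value estimate, showing that a failure of $o(1)$-decay would force a fixed amount of energy in arbitrarily small disks, contradicting absolute continuity of the energy integral. Once the $o(1)$-decay is in hand, the oscillation is estimated elementarily by integrating $|\nabla_A S|$ along an arc of length $O(\delta)$ in the annulus. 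You instead take the coarser first-derivative bound $|\nabla S(T_1)|\le C/\delta$ (needing only the $k=1$ case of Prop.~\ref{prop:timeslice_eps_reg}), rescale to unit scale so that $|\nabla\tilde S|\le C$ pointwise while $\|\nabla\tilde S\|_{L^2}^2\le C\eps_1$ is small, and convert this $L^\infty$--$L^2$ pair into smallness of $\|\nabla\tilde S\|_{L^p}$ for $p>2$ by pointwise interpolation, after which Morrey's embedding $W^{1,p}\embedto C^{0,1-2/p}$ delivers small oscillation. Both arguments use exactly the same two inputs (time-slice $\eps$-regularity and absolute continuity of $e(S(T_1))\in L^1$), but yours avoids the second-derivative estimate and the mean-value/contradiction step entirely, at the cost of not producing the refined pointwise decay $o(1)/|x-x_0|$, which in the paper's version is a stand-alone statement of some independent interest. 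One small point worth making explicit in a final write-up: Morrey's inequality on the (half-)annulus $\tilde A$ requires $\tilde A$ to be a connected Lipschitz (or cone-property) domain whose intrinsic metric is comparable to the ambient one; the annulus and half-annulus of inner and outer radii $1/2$ and $1$ both satisfy this, so the step is legitimate, but it should be stated.
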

\begin{proof}
	The argument is based on \cite{Qing2003remark}*{Thm.~3.4}, in which he obtained a gradient estimate of the solution around finite time singularities. Since the weak limit $S(T_1)\eqdef S(\cdot,T_1)$ has finite energy, one can take $r_0$ small, such that, for $\eps^0$ given in Proposition~\ref{prop:timeslice_eps_reg},
	\[
		\eng(S(T_1);D_{|x-x_0|}(x))\leq\eps^0, \quad \forall x\in D_{r_0}(x_0)\setminus\set{x_0}.
	\]
	Thus there exist some constants $C_2>0$ and $\delta_0>0$ (they are independent of the center $x$ and the radius $|x-x_0|$), such that,
	\[
		|\nabla^2_{A}S|(y,T_1)\leq\frac{C_2}{\delta_0^2|x-x_0|^2},\quad\forall y\in\tilde D_{\delta_0|x-x_0|}(x).
	\]

	We claim that $|\nabla_{A}S|(x,T_1)\leq\frac{o(1)}{|x-x_0|}$ as $x\to x_0$. In fact, if not, then there exist some $\bar\eps>0$ and a sequence $x_k\to x_0$ as $k\to\infty$, such that
	\[
		|x_k-x_0||\nabla_{A}S|(x_k,T_1)>\bar\eps,\quad\forall k=1,2\ldots.
	\]
	For any $y\in \tilde D_{\nu|x_k-x_0|}(x_k)$, where $\nu< \min\set{\delta_0,\frac{\delta_0^2}{2C_2}\bar\eps}$ is a fixed small number, by mean value theorem, there exists $z_k\in \tilde D_{\nu|x_k-x_0|}(x_k)$,
	\begin{align*}
		|\nabla_{A}S|(y,T_1) & =|\nabla_{A}S|(x_k,T_1)+(y-x_k)\cdot\nabla|\nabla_{A}S|(z_k,T_1)\\
		& \geq \frac{\bar\eps}{|x_k-x_0|}-|y-x_k||\nabla^2_{A}S|(z_k,T_1)\quad\text{(Kato inequality)}\\
		& \geq\frac{1}{|x_k-x_0|}\left(\bar\eps-C_2\nu/\delta_0^2\right) 
		\geq\frac{\bar\eps}{2|x_k-x_0|}.
	\end{align*}
	Integrating over $\tilde D_{\nu|x_k-x_0|}$,  we obtain
	\begin{equation}\label{eq:osc_contraction}
		\int_{\tilde D_{\nu|x_k-x_0|}(x_k)}|\nabla_{A}S(\cdot,T_1)|^2\geq C\bar\eps^2\nu^2,
	\end{equation}
	for some constant depending only on the geometry of $\Sigma$. 

	Finally, note that,
	\[
		\int_\Sigma|\nabla_{A}S(\cdot,T_1)|^2
		\leq2\int_\Sigma\dkf{|S(\cdot,T_1)a|^2+2e(S(\cdot,T_1))}< +\infty.
	\]
	The absolute continuity of integration implies that
	\[
		\int_{\tilde D_{\nu|x_k-x_0|}(x_k)}|\nabla_{A}S(\cdot,T_1)|^2\to0,\quad k\to\infty,
	\]
	which contradicts to \eqref{eq:osc_contraction}.

	Now consider $x_1,x_2\in D_\delta(x_0)\setminus D_{\delta/2}(x_0)$, let $\gc(t)$ be a path in $D_\delta(x_0)\setminus D_{\delta/2}(x_0)$ connecting $x_1$, $x_2$, and parameterize $\gc$ by arc length. It is clear that we can assume the length $L(\gc)\leq l_0\delta$, where $l_0$ is a constant depending only on the metric of $\Sigma$. Note that, one can assume further that $D_\delta(x_0)$ is contained in a trivialization neighborhood $U_\ga$, then
	\[
		(\nabla_{\dot{\gc}(t);A}S)|_{U_\ga}=\dt{S|_{U_\ga}(\gc(t))}+\dot\gc(t)\lh[A,S]|_{U_\ga},
	\]
	where $A$ is the $\g$-valued 1-form on $U_\ga$ such that $\nabla_{A}|_{U_\ga}=\rd+A$. Therefore, in $U_\ga$,
	\[
		\left|\dt{S(\gc(t))}\right|\leq|\nabla_{\dot\gc(t);A}S|+C,
	\]
	where $C$ is a constant depending on the connection $A$ and the structure group $G$. Now, take $\delta=\delta(\eps)$, such that $2Cl_0\delta< \eps$, then \eqref{eq:osc_estimates} follows from the following calculation
	\begin{align*}
		|S(x_1,T_1)-S(x_2,T_1)| & \leq\int_0^{L(\gc)}\left|\ppt{t}S(\gc(t),T_1)\right|\rd t \\
		& =\int_0^{L(\gc)}\xkf{|\nabla_{\dot\gc(t);A}S(\cdot,T_1)|+C}\rd t \\
		& \leq\int_0^{L(\gc)}\left(\frac{o(1)}{|\gc(t)-x_0|}+C\right)\rd t\\
		&\leq \xkf{\frac{2o(1)}{\delta}+C}l_0\delta< \eps.
	\end{align*}
\end{proof}
\subsection{Modification at the Singularities: the Approximation Lemma}\label{subsec:modify}
In this subsection, we will prove the approximation lemma (see Lemma~\ref{mthm:B}).

It is reasonable to divide our discussion into two cases according to the location of singularities, because at the interior singularity, in contrast to the boundary one, there is no need to worry about the boundary condition.

Firstly, let us consider the simpler interior case. Suppose that $x_0$ is an interior singular point, without loss of generality, one can assume that $D_{2\delta}(x_0)$ (which is just $B_{2\delta}(x_0)$ in interior case) is contained in a trivialization neighborhood $U$, and the local expression (under a fixed local frame and coordinate system) of $S(\cdot,T_1)|_U$ is given by a map $u:U\to G$. For any $\bar u\in G$, there is a uniform constant $\eta_1>0$ such that the exponential map at $\bar u$ is a diffeomorphism from a neighborhood of $0$ in $\g$ onto $B_{\eta_1}(\bar u)$. For this $\eta_1>0$, by Proposition~\ref{prop:osc_estimats}, one can take $\delta=\delta_1>0$ even smaller such that $u|_{D_{\delta}(x_0)\setminus D_{\delta/2}(x_0)}\in B_{\eta_1}(u^0)\subset G$, where $x^0$ is a fixed point in $D_{\delta}(x_0)\setminus D_{\delta/2}(x_0)$, $u^0=u(x^0)\in G$. 

Let $\phi$ be some cutoff function satisfying,
\[
	\phi=\begin{cases}
		0,&D_{\delta/2}(x_0)\\
		1,&D^c_{\delta}(x_0),
	\end{cases}\quad\text{and}\quad|\nabla\phi|< \frac{4}{\delta}
\]
Set 
\[
	\tilde u(x)=\exp_{u^0}(\phi(x)\exp_{u^0}^{-1}u(x))\quad\text{in}\quad D_{2\delta}(x_0),
\]
which implies that
\[
	\tilde u(x)=\begin{cases}
		u^0,&x\in D_{\delta/2}(x_0)\\
		u(x),&x\in D_{2\delta}(x_0)\setminus D_{\delta}(x_0).
	\end{cases}
\]
We claim that the gauge transformation, denoted by $\tilde S(\cdot,T_1)$, which is equal to $S(\cdot,T_1)$ outside $D_\delta(x_0)$ and is given by the local expression $\tilde u$ inside $D_\delta(x_0)$, is the required approximation in Lemma~\ref{mthm:B}. 

To see the claim is true, it suffices to estimate the cost of energy in the cutoff. For this purpose, we compute
\begin{align*}
	2\eng(\tilde S(\cdot,T_1);D_{2\delta}(x_0)) & =\int_{D_{2\delta}(x_0)}|\tilde u^{-1}(\rd\tilde u+[A,\tilde u]+a)|^2\rd x \\
	& =\zkf{\int_{D_{2\delta}(x_0)\setminus D_{\delta}(x_0)}+\int_{D_\delta(x_0)\setminus D_{\delta/2}(x_0)}+\int_{D_{\delta/2}(x_0)}}
	|\rd\tilde u+[A,\tilde u]+\tilde ua)|^2\rd x\\
	& =2\eng(S(\cdot,T_1);D_{2\delta}(x_0)\setminus D_{\delta}(x_0))
	+\int_{D_{\delta/2}(x_0)}
	|[A,u^0]+u^0a|^2\rd x\\
	& \qquad+\int_{D_\delta(x_0)\setminus D_{\delta/2}(x_0)}
	|\rd\tilde u+[A,\tilde u]+\tilde ua|^2\rd x.
\end{align*}
By our choice of $\delta$, for any $x\in D_\delta(x_0)\setminus D_{\delta/2}(x_0)$, one has $\tilde u,u\in B_\eta(u^0)$. Therefore, $|\tilde u|$ is bounded by a universal constant depending only on $G$. Also, for sufficiently small $\eta=\eta_2<\eta_1$, the differential $|d\exp_{u^0}|_{\g_\eta}|$ and $|d\exp_{u^0}^{-1}|_{B_\eta(u^0)}|$ are all bounded by a universal constant. Thus,
\begin{align*}
	|\rd\tilde u+[A,\tilde u]+\tilde ua| & \leq C+|d\tilde u|
	\leq C(1+|d\phi||\exp_{u^0}^{-1}u|+|du|)\\
	& \leq C(1+\eta/\delta+|du+[A,u]+ua|).
\end{align*}
For any given $\sigma>0$, there exist some $0< \delta^0< \delta_1$ and $0< \eta^0< \eta_2$, such that for any $\delta< \delta^0$ and $\eta< \eta^0$, one has (recall, $N$ is the number of blowup points),
\begin{gather*}
	\int_{D_\delta(x_0)\setminus D_{\delta/2}(x_0)}
	|d\tilde u+[A,\tilde u]+\tilde ua|^2\rd x
	\leq C\xkf{\delta^2+\eta^2+E(u;D_{\delta}(x_0))}< \frac{\sigma}{3N},\\
	\intertext{and}
	\int_{B_{\delta/2}(x_0)}|[A,u^0]+u^0a|^2\rd x\leq C\delta^2\leq\frac{\sigma}{3N},\quad
	\eng(S(\cdot,T_1);D_{\delta}(x_0))\leq\frac{\sigma}{3N}.
\end{gather*}
Thus,
\[
	0\leq\eng(\tilde S;D_{2\delta_0}(x_0))-\eng(S(\cdot,T_1);D_{2\delta_0}(x_0)\setminus D_{\delta_0}(x_0))\leq\frac{2\sigma}{3N},\quad\forall\delta_0< \delta^0.
\]
In particular,
\begin{eq}\label{eq:energy_diff_loc}
	\abs{\eng(\tilde S;D_{2\delta_0}(x_0))-\eng(S(\cdot,T_1);D_{2\delta_0}(x_0)}\leq\frac{\sigma}{N}.
\end{eq}
This proves the approximation lemma (see Lemma~\ref{mthm:B}) in interior case.

When the energy concentration point lies on the boundary, there are no boundary (half) bubbles as shown in \autoref{subsec:nonbb}, however there may exist interior (whole) bubble at the boundary. To take care of the boundary condition, we need a three-step modification. Firstly, let us do the modification as above, and denote the resulting gauge transformation by $S_1$.

\begin{figure}[!htbp]
	\centering
	\includegraphics{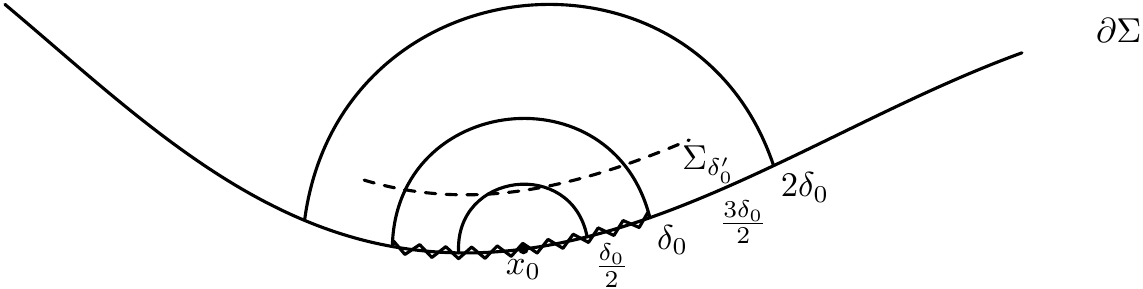}
	\caption{Cut-off at the boundary}
\end{figure}
We restrict ourselves in $D_{2\delta_0}(x_0)$, and let $u,w$ be the representation of $S(\cdot,T_1)$ and $S_1$ in $D_{2\delta_0}(x_0)$, respectively. Note that in step one, we remove the singularity by approximation, but mess up the boundary condition. In the next step, we will adjust the boundary condition by constructing a map which satisfies the boundary condition and is close enough to the map in step one. Recall our boundary condition is
\[
	\vppt{u}{\nu}+[A(\nu),u]+ua(\nu)=0.
\]
Set $V(u)\eqdef[A(\nu),u]+ua(\nu)$, it is a tangent vector of $G$ at $u$. We will use prime to denote the part of boundary on $\pt\Sigma$, e.g., $\pt'D_{2\delta_0}=\pt D_{2\delta_0}\cap\pt\Sigma$. Let $x=(y,r)$ be the coordinate near $\pt' D_{2\delta_0}$ (sufficiently close), where $r$ is the distance of $x$ to $\pt'D_{2\delta_0}$ and $y$ is the coordinate of $\pt'D_{2\delta_0}$. Then our new mapping can be expressed as
\[
	q(y,r)=\exp_{w(y,0)}(rV(w(y,0))),\quad y\in\pt'D_{2\delta_0}(x_0).
\]
It is easy to verify the following properties
\begin{itemize}
	\item $q(y,0)=w(y,0)$;
	\item The boundary condition holds for $q$. In fact, at the boundary,
		\[
			\vppt{q}{\nu}=-\vppt{q}{r}=-V(w(y,0))=-V(q(y,0)).
		\]
\end{itemize}
Finally, set
\[
	v(y,r)=\exp_{u^0}(\xi\varphi\exp_{u^0}^{-1}q+(1-\xi\varphi)\exp_{u^0}^{-1}w),
\]
where $\xi$ is a cutoff function, satisfies
\[
	\xi=\begin{cases}
		1,&x\in\Sigma_{\delta_0'}\\
		0,&x\in\Sigma\setminus\Sigma_{2\delta_0'},
	\end{cases}\quad|\nabla\xi|\leq\frac{2}{\delta_0'},\quad0\leq\xi\leq1,
\]
for some $\delta_0'$ small enough ($\delta'_0< \min\set{1,\delta_0}$), and $\Sigma_{\delta_0'}$ denotes all the points of $\Sigma$ such that its distance to $\pt\Sigma$ less than $\delta_0'$. There $\varphi=\varphi(y)$ is a cut off function on $\pt\Sigma$, such that
\[
	\varphi=\begin{cases}
		1,&y\in\pt'D_{3\delta_0/2},\\
		0,&y\in\pt\Sigma\setminus\pt'D_{2\delta_0},
	\end{cases}
	\quad|\nabla\varphi|\leq\frac{4}{\delta_0},\quad0\leq\varphi\leq1,
\]

Note that, for $y\in\pt' D_{2\delta_0}(x_0)$, $w(y,0)$ and $d\exp|_{w(y,0)}$ is bounded by $|u^0|$, $1+\eta$ and the geometry of $G$. In the following, we write $a\lesssim b$, if $a$ is less than $b$ multiplied by $1+\eta$, $\delta_0$ and a constant depending only on $A,a,u^0$, $G$ but independent of $\delta_0'$. We omit the region $\Sigma_{2\delta_0'}\cap D_{2\delta_0}(x_0)$ for $L^\infty$ norm to simplify the notation. Then
\begin{align*}
	\|\nabla q\|_{L^\infty} & \lesssim\|V(w(y,0))\|_{L^\infty} +\delta_0'\|\nabla_y[V(w(y,0))]\|_{L^\infty}\\
	&\qquad+\|\nabla_yw(y,0)\|_{L^\infty}\exp(\delta_0'\|V(w(y,0))\|_{L^\infty})\\
	& \lesssim\|w(y,0)\|_{L^\infty(\pt'D_{2\delta_0}(x_0))}+\xkf{\delta_0'+\exp(\delta_0'\eta C(A,a,u^0))}\|\nabla_yw(y,0)\|_{L^\infty(\pt'D_{2\delta_0}(x_0))}.
\end{align*}
Thus, $\exp^{-1}_{u^0}q$ can be defined for $\eta$, $\delta_0'$, $\delta_0$ small enough.

To verify the boundary condition holds for $v$, one only needs to concern on the part $\pt'D_{2\delta_0}\setminus\pt'D_{3\delta_0/2}$. On this part of boundary, one has $\pt_r(\xi\varphi)=0$, $q=w=v$, thus $V(q)=V(w)=V(v)$, and
\begin{align*}
	\vppt{v}{r}&=d\exp_{u^0}|_{\exp^{-1}_{u^0}q}\dkf{\pt_r(\xi\varphi)\zkf{\exp_{u^0}^{-1}q-\exp_{u^0}^{-1}w}
	+\xi\varphi\pt_r\exp_{u^0}^{-1}q+(1-\xi\varphi)\pt_r\exp_{u^0}^{-1}w}\\
	&=d\exp_{u^0}|_{\exp^{-1}_{u^0}q}\dkf{\xi\varphi d\exp_{u^0}^{-1}|_q\vppt{q}{r}+(1-\xi\varphi)d\exp_{u^0}^{-1}|_w\vppt{w}{r}}\\
	&=d\exp_{u^0}|_{\exp^{-1}_{u^0}q}d\exp_{u^0}^{-1}|_q\dkf{\xi\varphi V(q)+(1-\xi\varphi)V(w)}\\
	&=d[\exp_{u^0}\comp\exp_{u^0}^{-1}]|_qV(v)\\
	&=V(v),
\end{align*}
that is
\[
	\vppt{v}{\nu}=-\vppt{v}{r}=-V(v),
\]
which verifies the boundary condition.

We need to show that the energy of $v$ in $D_{2\delta_0}(x_0)$ is small enough. In fact,
\begin{gather*}
	|\nabla v|\lesssim|\nabla(\xi\varphi)\xkf{\exp_{u^0}^{-1}q-\exp_{u^0}^{-1}w}|+|\xi\varphi\nabla q|+(1-\xi\varphi)|\nabla w|,\\
	|\exp_{u^0}^{-1}q(y,r)-\exp_{u^0}^{-1}q(y,0)|\lesssim|\nabla q|r,\\
	|\exp_{u^0}^{-1}w(y,r)-\exp_{u^0}^{-1}q(y,0)|\lesssim|\nabla w|r.
\end{gather*}
Thus,
\begin{align*}
	\int_{D_{2\delta_0}(x_0)}|\nabla v|^2 & \lesssim 2\int_{D_{2\delta_0}(x_0)}|\nabla(\xi\varphi)|^2\xkf{|\nabla q|^2+|\nabla w|^2}r^2+|\xi\nabla q|^2+2\int_{D_{2\delta_0}(x_0)}|\nabla w|^2 \\
	& \lesssim 2\int_{\Sigma_{2\delta_0'}\cap D_{2\delta_0(x_0)}}2(|\nabla\xi|^2+|\nabla\varphi|^2)(|\nabla q|^2+|\nabla w|^2)\delta_0'^2+|\nabla q|^2+2\int_{D_{2\delta_0}(x_0)}|\nabla w|^2\\
	& \lesssim \zkf{1+\delta_0'^2\xkf{\frac{1}{\delta_0'^2}+\frac{1}{\delta_0^2}}}
	\dkf{
		\int_{\Sigma_{2\delta_0'}\cap D_{2\delta_0(x_0)}}|\nabla q|^2
		+\int_{D_{2\delta_0}(x_0)}|\nabla w|^2
	}\\
	&\lesssim\xkf{1+\frac{\delta_0'^2}{\delta_0^2}}
	\dkf{
		\int_{\Sigma_{2\delta_0'}\cap D_{2\delta_0(x_0)}}|\nabla q|^2
		+\int_{D_{2\delta_0}(x_0)}|\nabla w|^2
	}.
\end{align*}
We already shown, in $\Sigma_{2\delta_0'}\cap D_{2\delta_0}(x_0)$,
\begin{align*}
	|\nabla q(y,r)|&\lesssim|w(y,0)|+\xkf{\delta_0'+\exp(\delta_0'\eta C(A,a,u^0))}|\nabla_yw(y,0)|\\
	&\leq C(A,a,u^0)(1+\eta)\zkf{1+\xkf{\delta_0'+\exp\xkf{\delta_0'\eta C(A,a,u^0)}}\xkf{\frac{\eta}{\delta_0}+\frac{1}{\delta_0}}}.
\end{align*}
Since we can restrict $\eta< 1$ and $\delta_0'< 1$,
\[
	|\nabla q(x)|^2\leq C(A,a,u^0)\xkf{1+\frac{1}{\delta_0^2}},\quad\forall x\in\Sigma_{2\delta_0'}\cap D_{2\delta_0}(x_0).
\]
We first take $\delta_0$ small such that
\[
	\int_{D_{2\delta_0}(x_0)}|\nabla w|^2\leq\frac{\sigma}{9N C(A,a,u^0)}.
\]
Thus, we can take $\delta_0'$ small enough (which depend on $\delta_0$), such that
\begin{align*}
	\int_{D_{2\delta_0}(x_0)}|\nabla v|^2&\leq C(A,a,u^0)\zkf{\xkf{1+\frac{1}{\delta_0^2}}\xkf{1+\frac{\delta_0'^2}{\delta_0^2}}\delta_0\delta_0'
	+\xkf{1+\frac{\delta_0'^2}{\delta_0^2}}\int_{D_{2\delta_0}(x_0)}|\nabla w|^2
}\\
&\leq \frac{\sigma}{9N}+\frac{\sigma}{9N}
+C(A,a,u^0)\frac{\delta_0'^2}{\delta_0^2}\int_{D_{2\delta_0}(x_0)}|\nabla w|^2\leq\frac{\sigma}{3N}.
\end{align*}
Let $\tilde S$ be the gauge transformation locally represented by $v$ in $D_{2\delta_0}(x_0)$, then it is clear that \eqref{eq:energy_diff_loc} holds. Thus we finished the proof of approximation lemma.
\subsection{The Existence of Generalized Solution}\label{sec:proof_mthmC}
In what follows, we first show that for any given $\sigma< \eps_0$ (the constant in $\eps$-regularity), the energy losses in the approximation process is at least $\eps_0-\sigma$. Then we show the generalized solution described in Theorem~\ref{mthm:C} exists and its energy is monotonically decreasing.

To compute the energy losses, note that since $x_0$ is a singular point, we know that there exists a subsequence $\set{t_i}\nearrow T_1$, such that for any $\delta'< \delta_0$ ($\delta_0$ is the constant given in approximation lemma at $x_0$),
\[
	\lim_{t_i\nearrow T_1}\eng(S(\cdot,t_i);D_{\delta'}(x_0))\geq\eps_0.
\]
On the other hand, since $x_0$ is the only singularity in $D_{2\delta_0}(x_0)$, the $\eps$-regularity implies that $S(\cdot,t)$ converges strongly ($C^\infty$) to $S(\cdot,T_1)$ in $D_{2\delta_0}(x_0)\setminus D_{\delta'}(x_0)$,
\begin{align*}
	\lim_{t_i\nearrow T_1}\eng(S(\cdot,t_i);D_{2\delta_0}(x_0))
	& =\lim_{t_i\nearrow T_1}\eng(S(\cdot,t_i);D_{2\delta_0}(x_0)\setminus D_{\delta'}(x_0))+\lim_{t_i\nearrow T_1}\eng(S(\cdot,t_i);D_{\delta'}(x_0)) \\
	& \geq\eng(S(\cdot,T_1);D_{2\delta_0}(x_0)\setminus D_{\delta'}(x_0))+\eps_0.
\end{align*}
Taking $\delta'\to0$, by \eqref{eq:energy_diff_loc}, we obtain the energy cost during approximating the weak solution
\[
	\lim_{t_i\nearrow T_1}\eng(S(\cdot,t_i);D_{2\delta_0}(x_0))\geq\eng(\tilde S(\cdot,T_1);D_{2\delta_0}(x_0))+\eps_0-\sigma.
\]
Therefore,
\begin{eq}\label{eq:energy_losses}
	\lim_{t_i\nearrow T_1}\eng(S(\cdot,t_i);\Sigma)\geq\eng(\tilde S(\cdot,T_1);\Sigma)+N(T_1)(\eps_0-\sigma),
\end{eq}
where $N(T_1)$ is the number of blow-up points at time $T_1$.

Next, we turn to the existence of generalized solution. We run the flow \eqref{eq:mflw} until the first singular time, then take the approximation map $\tilde S$ (given in approximation lemma) as new initial value, and run the flow again. Proposition~\ref{prop:evolution_of_energy} shows that
\[
	\ppt{t}\eng(S(\cdot,t);\Sigma)=-\int_\Sigma|\pt_tS|^2\leq0.
\]
Let $I_k\eqdef [T_k,T_{k+1})$, 
$k=0,1,\ldots,K$, where $T_k$ are singular times (except $T_0=0$ and $T_{K+1}=+\infty$). To simplify the notation, we write $S$ be the generalized solution, thus for singular time $T_k$, $S(T_k)$ should be understood as the $\sigma$-approximation map stated in approximation lemma (see Lemma~\ref{mthm:B}). Firstly, for $t_i\nearrow T_1$ as in \eqref{eq:energy_losses}, integrate $S$ on $[T_0,t_i]$,
\[
	\int_{T_0}^{t_i}\int_\Sigma|\pt_tS|^2=\eng(S(T_0);\Sigma)-\eng(S(t_i);\Sigma),
\]
take limits and apply \eqref{eq:energy_losses},
\[
	\eng(S(T_1);\Sigma)\leq\eng(S(T_0);\Sigma)-N(T_1)(\eps_0-\sigma).
\]
Apply similar arguments on $I_k$, $k=1,2,\ldots, K-1$,
\[
	\eng(S(T_{k+1});\Sigma)\leq\eng(S(T_k);\Sigma)-N(T_{k+1})(\eps_0-\sigma).
\]
Summing up,
\[
	\eng(S(T_K);\Sigma)\leq\eng(S_0;\Sigma)-(\eps_0-\sigma)\sum_{k=1}^{K}N(T_k).
\]
Finally, for $I_K$, since energy is monotonically decreasing on $I_K$,
\[
	\int_{T_K}^\infty\int_{\Sigma}|\pt_t S|^2\leq\eng(S(T_K);\Sigma)\leq \eng(S_0;\Sigma)-(\eps_0-\sigma)\sum_{k=1}^{K}N(T_k)< +\infty,
\]
which implies that there exists $t_i\to\infty$, such that
\[
	\|(\pt_t S)(\cdot,t_i)\|_{L^2(\Sigma)}\to0.
\]
Similar to the blow-up of a sequence of harmonic maps with tangent filed in $L^2$ (the process is almost the same as finite time blow-up discussed in \autoref{subsec:blwcrt}), we know that $S^i\eqdef S(\cdot,t_i)$ converge smoothly to some $S_\infty$ as $t_i\to\infty$ away from finite many singularities. By the removable of singularity theorem (see Lem.~\ref{lem:rmv_sing} and Cor.~\ref{cor:rmv_bdy_sing}), we conclude that $S_\infty$ solves \eqref{eq:meq} on $\Sigma$, and Theorem~\ref{mthm:C} follows.
\appendix
\section{Proof of Corollary~\ref{mthm:D}}\label{sec:app}
In \cite{Riviere2007Conservation}, the existence of Coulomb gauge with respect to a ``small'' connection on a disc plays an important role in the existence of conservation law. It is quite natural to expect ``good estimate'' under ``good gauge'', and then the existence of conservation law can be obtained from a fixed point argument of systems of elliptic partial differential equations.

One can also obtain the above existence of Coulomb gauge by heat flow method. In fact, as a result of the existence of generalized solution of \eqref{eq:mflw} when applied to a special case, we will present a flow version proof of the existence (c.f. \citelist{\cite{Schikorra2010remark}*{Thm.~2.1}\cite{FrohlichMuller2011existence}*{Prop.~2}}).

In what follows, we will take $\Sigma$ be a small disc (dimension two), not necessarily with Euclidean metric. A connection of vector bundle $E$ over $\Sigma$, where $E$ is with rank $m$ and structure group $\SO(m)$, can be written as $d+\Omega$, where $\Omega\in\so(m)\otimes T^*\Sigma$ is a 1-form with value in $\so(m)$, then for any gauge transformation $S$,
\[
	S^*(d+\Omega)=d+S^{-1}dS+S^{-1}\Omega S\eqdef d+\tilde\Omega.
\]
Thus, if one sets $A_0=d$ as a trivial connection and $A=d+\Omega$, the existence of generalized solution of \eqref{eq:mflw} implies that there exists some $S\in C^\infty(\Sigma,\SO(m))$, such that
\[
	0=\nabla^*_{d+\tilde\Omega}\tilde\Omega=\nabla_d^*\tilde\Omega +\set{\tilde\Omega,\tilde\Omega}=d^*\tilde\Omega.
\]
By Poincar\'e Lemma, there exists some $\xi\in C^\infty(\Sigma, \so(m))$, such that
\begin{eq}
	\label{eq:hodge_decomp}
	\begin{cases}
		\nabla^\perp\xi=S^{-1}dS+S^{-1}\Omega S, & x\in \Sigma\\
		\xi=0, & x\in\pt \Sigma.
	\end{cases}
\end{eq}
The boundary condition holds since, the flow implies that on $\pt \Sigma$,
\[
	0=\nu\lh(S^*(A)-A_0)=\nu\lh \tilde\Omega=\nu\lh\nabla^\perp\xi=\nu^\perp\lh(-\nabla\xi),
\]
which shows that $\xi$ is a constant along $\pt \Sigma$, and one can make $\xi$ vanishes on boundary since it is determined up to a constant.

To show the estimates hold in Corollary~\ref{mthm:D}, we need to construct $S_0$ properly. Parameterize $\Sigma_{2\delta}\eqdef\set{x\in\bar \Sigma|\dist(x,\pt \Sigma)\leq 2\delta}$as $x=(y,r)$, where $r$ is the distance to $\pt \Sigma$ and $y$ is the parameter of $\pt \Sigma$. Firstly, let us assume that $\Omega\in C^\infty(\Sigma,\so(m)\otimes T^*\Sigma)$ and set the initial value as
$$
S_0(x)=\exp\set{(1-\eta)r\Omega_\nu(x)}.
$$
where $\Omega_\nu(x)=-\Omega(r,y)(\ppt{r})$, and $\eta\in C_0^\infty(\Sigma)$, with $\eta=0$ on $\Sigma_\delta$ and $\eta=1$ on $\Sigma\setminus\Sigma_{2\delta}$, moreover, one can require $|\nabla \eta|\leq 2/\delta$. 
To see the compatibility at the corner, note that, on the corner $\pt\Sigma\times\set{0}$, $S=S_0=\id$,
where $\id$ is the constant section over $\pt\Sigma$, which maps each point to the identity of $G$,
\begin{align*}
	\nu\lh(S^*(A)-A_0)&=\nu\lh(S^{-1}\rd S+S^{-1}\Omega S)=\nu\lh(dS+\Omega)\\
	&=-\ppt{r}\lh(\pt_rS_0dr)+\Omega_\nu=-\Omega_\nu+\Omega_\nu=0.
\end{align*}
The energy of $S_0$ can be controlled as
\[
	2\eng(S_0) =\int_\Sigma |S_0^{-1}dS_0+S_0^{-1}\Omega S_0|^2
	\leq \int_\Sigma(|dS_0|^2+|\Omega|^2)
	=\|dS_0\|_2^2+\|\Omega\|_2^2.
\]

We claim that $\eng(S_0)$ can be taken as small as wanted. In fact, note that for $x\in\Sigma_{2\delta}$, the derivative of $\exp$ is uniformly bounded with respect to $\delta$, thus
\[
	|dS_0|\leq C\xkf{(1-\eta)|\Omega_\nu|+r\xkf{|\nabla\eta||\Omega_\nu|+|d\Omega_\nu|}},
\]
and
\[
	|dS_0|^2\leq C|\Omega|_{1,2}^2\xkf{1+r^2|\nabla\eta|^2+r^2}.
\]
Finally,
\begin{align*}
	\int_\Sigma|dS_0|^2 & =\int_{\Sigma_{2\delta}\setminus\Sigma_{\delta}}|dS_0|^2
	\leq C\int_{\Sigma_{2\delta}\setminus\Sigma_{\delta}}
	|\Omega|_{1,2}^2(1+\delta^2\cdot 1/\delta^2+\delta^2),
\end{align*}
which tends to $0$ as $\delta\to0$ by the absolute continuity of integration.

Thus, for any $\eps>0$, one can construct $S_0$, such that $\|dS_0\|_2\leq\eps$. Since the energy is monotonically decreasing along the flow,
\[
	\|\tilde\Omega\|_2^2=2\eng(S)\leq2\eng(S_0)
	\leq\|\Omega\|_2^2+\eps^2.
\]
Moreover, since $\nabla^\perp\xi=S^{-1}dS+S^{-1}\Omega S$,
\[
	\|dS\|_2\leq\|\Omega\|_2+\|\nabla^\perp\xi\|_2 =\|\Omega\|_2+\|\tilde\Omega\|_2 \leq2\|\Omega\|_2+\eps.
\]
By Poincar\'e inequality, one concludes that
\[
	\|dS\|_2+\|\xi\|_{1,2}\leq C(m)\|\Omega\|_2.
\]
To show the required estimate for $\Omega\in L^2(\Sigma,\so(m)\otimes T^*\Sigma)$, let us approximate it by $\Omega_k\in C^\infty(\Sigma,\so(m)\otimes T^*\Sigma)$ in $L^2$ with $\|\Omega_k\|_2\leq\|\Omega\|_2$. For $A_0=\rd$, $A_k\eqdef\rd+\Omega_k$, $a_k\eqdef\Omega_k$, run the gauge transformation heat flow,
\[\begin{cases}
		S^{-1}\vppt{S}{t}=-\nabla^*_{S^*(A_k)}(S^*(A_k)-A_0), & (x,t)\in D\times(0,+\infty) \\
		S(x,0)=\exp((1-\eta)ra_{k,\nu}), & x\in D \\
		\nu\lh(S^*(A_k)-A_0)=0, & (x,t)\in\pt D\times[0,+\infty)
\end{cases}\]
one obtains $S_k\in W^{1,2}(D,\SO(m))\cap C^\infty$, $\xi_k\in W^{1,2}(D,\SO(m))\cap C^\infty$, which solves \eqref{eq:hodge_decomp}, with
\[
	\|dS_k\|_2+\|\xi_k\|_{1,2}\leq C(m)\|\Omega_k\|_2.
\]
Therefore, $S_k$, $\xi_k$ are $W^{1,2}$ bounded, the weak compactness implies that $S_k$, $\xi_k$ converges weakly to some $S_\infty$, $\xi_\infty$ in $W^{1,2}$, respectively. Take limits in \eqref{eq:hodge_decomp}, one finds the required gauge. The weakly lower semi-continuity implies the required estimate, and we finish the proof of Corollary~\ref{mthm:D}.

\bigskip
\def\cprime{$'$}
\begin{bibdiv}
\begin{biblist}

\bib{Chang1989Heat}{article}{
      author={Chang, Kung-Ching},
       title={Heat flow and boundary value problem for harmonic maps},
        date={1989},
        ISSN={0294-1449},
     journal={Ann. Inst. H. Poincar\'e Anal. Non Lin\'eaire},
      volume={6},
      number={5},
       pages={363\ndash 395},
         url={http://www.numdam.org/item?id=AIHPC_1989__6_5_363_0},
      review={\MR{1030856 (90i:58037)}},
}

\bib{DonaldsonKronheimer1990geometry}{book}{
      author={Donaldson, S.~K.},
      author={Kronheimer, P.~B.},
       title={The geometry of four-manifolds},
      series={Oxford Mathematical Monographs},
   publisher={The Clarendon Press, Oxford University Press, New York},
        date={1990},
        ISBN={0-19-853553-8},
        note={Oxford Science Publications},
      review={\MR{1079726 (92a:57036)}},
}

\bib{EellsSampson1964Harmonic}{article}{
      author={Eells, James, Jr.},
      author={Sampson, J.~H.},
       title={Harmonic mappings of {R}iemannian manifolds},
        date={1964},
        ISSN={0002-9327},
     journal={Amer. J. Math.},
      volume={86},
       pages={109\ndash 160},
      review={\MR{0164306 (29 \#1603)}},
}

\bib{FreedUhlenbeck1991Instantons}{book}{
      author={Freed, Daniel~S.},
      author={Uhlenbeck, Karen~K.},
       title={Instantons and four-manifolds},
     edition={Second},
      series={Mathematical Sciences Research Institute Publications},
   publisher={Springer-Verlag, New York},
        date={1991},
      volume={1},
        ISBN={0-387-97377-X},
         url={http://dx.doi.org/10.1007/978-1-4613-9703-8},
      review={\MR{1081321 (91i:57019)}},
}

\bib{FrohlichMuller2011existence}{article}{
      author={Fr{\"o}hlich, Steffen},
      author={M{\"u}ller, Frank},
       title={On the existence of normal {C}oulomb frames for two-dimensional
  immersions with higher codimension},
        date={2011},
        ISSN={0174-4747},
     journal={Analysis (Munich)},
      volume={31},
      number={3},
       pages={221\ndash 236},
         url={http://dx.doi.org/10.1524/anly.2011.1123},
      review={\MR{2822307}},
}

\bib{Hamilton1975Harmonic}{book}{
      author={Hamilton, Richard~S.},
       title={Harmonic maps of manifolds with boundary},
      series={Lecture Notes in Mathematics, Vol. 471},
   publisher={Springer-Verlag, Berlin-New York},
        date={1975},
      review={\MR{0482822 (58 \#2872)}},
}

\bib{Helein2002Harmonic}{book}{
      author={H{\'e}lein, Frederic},
       title={Harmonic maps, conservation laws and moving frames},
     edition={Second},
      series={Cambridge Tracts in Mathematics},
   publisher={Cambridge University Press, Cambridge},
        date={2002},
      volume={150},
        ISBN={0-521-81160-0},
         url={http://dx.doilj.org/10.1017/CBO9780511543036},
        note={Translated from the 1996 French original, With a foreword by
  James Eells},
      review={\MR{1913803 (2003g:58024)}},
}

\bib{Jost2011Riemannian}{book}{
      author={Jost, Jurgen},
       title={Riemannian geometry and geometric analysis},
     edition={Sixth},
      series={Universitext},
   publisher={Springer, Heidelberg},
        date={2011},
        ISBN={978-3-642-21297-0},
         url={http://dx.doi.org/10.1007/978-3-642-21298-7},
      review={\MR{2829653}},
}

\bib{LadyzenskajaSolonnikovUralceva1988Linear}{book}{
      author={Lady{\v{z}}enskaja, O.~A.},
      author={Solonnikov, V.~A.},
      author={Ural{\cprime}ceva, N.~N.},
       title={Linear and quasilinear equations of parabolic type},
      series={Translated from the Russian by S. Smith. Translations of
  Mathematical Monographs, Vol. 23},
   publisher={American Mathematical Society, Providence, R.I.},
        date={1968},
      review={\MR{0241822 (39 \#3159b)}},
}

\bib{Lawson1985gauge}{book}{
      author={Lawson, H.~Blaine, Jr.},
       title={The theory of gauge fields in four dimensions},
      series={CBMS Regional Conference Series in Mathematics},
   publisher={Published for the Conference Board of the Mathematical Sciences,
  Washington, DC; by the American Mathematical Society, Providence, RI},
        date={1985},
      volume={58},
        ISBN={0-8218-0708-0},
      review={\MR{799712 (87d:58044)}},
}

\bib{LiWang2006Bubbling}{article}{
      author={Li, Yuxiang},
      author={Wang, Youde},
       title={Bubbling location for {$F$}-harmonic maps and inhomogeneous
  {L}andau-{L}ifshitz equations},
        date={2006},
        ISSN={0010-2571},
     journal={Comment. Math. Helv.},
      volume={81},
      number={2},
       pages={433\ndash 448},
      review={\MR{2225633 (2008f:35390)}},
}

\bib{Lieberman1996Second}{book}{
      author={Lieberman, Gary~M.},
       title={Second order parabolic differential equations},
   publisher={World Scientific Publishing Co., Inc., River Edge, NJ},
        date={1996},
        ISBN={981-02-2883-X},
         url={http://dx.doi.org/10.1142/3302},
      review={\MR{1465184 (98k:35003)}},
}

\bib{Ma1991Harmonic}{article}{
      author={Ma, Li},
       title={Harmonic map heat flow with free boundary},
        date={1991},
        ISSN={0010-2571},
     journal={Comment. Math. Helv.},
      volume={66},
      number={2},
       pages={279\ndash 301},
         url={http://dx.doi.org/10.1007/BF02566648},
      review={\MR{1107842}},
}

\bib{MeyerRiviere2003partial}{article}{
      author={Meyer, Yves},
      author={Rivi{\`e}re, Tristan},
       title={A partial regularity result for a class of stationary
  {Y}ang-{M}ills fields in high dimension},
        date={2003},
        ISSN={0213-2230},
     journal={Rev. Mat. Iberoamericana},
      volume={19},
      number={1},
       pages={195\ndash 219},
         url={http://dx.doi.org/10.4171/RMI/343},
      review={\MR{1993420}},
}

\bib{Michor2008in}{book}{
      author={Michor, Peter~W.},
       title={Topics in differential geometry},
      series={Graduate Studies in Mathematics},
   publisher={American Mathematical Society, Providence, RI},
        date={2008, available at \url{http://goo.gl/sbCBtN}, access date:
  \today},
      volume={93},
        ISBN={978-0-8218-2003-2},
         url={http://dx.doi.org/10.1090/gsm/093},
      review={\MR{2428390}},
}

\bib{MullerSchikorra2009Boundary}{article}{
      author={M{\"u}ller, Frank},
      author={Schikorra, Armin},
       title={Boundary regularity via {U}hlenbeck-{R}ivi\`ere decomposition},
        date={2009},
        ISSN={0174-4747},
     journal={Analysis (Munich)},
      volume={29},
      number={2},
       pages={199\ndash 220},
         url={http://dx.doi.org/10.1524/anly.2009.1025},
      review={\MR{2554638}},
}

\bib{Nirenberg1966extended}{article}{
      author={Nirenberg, L.},
       title={An extended interpolation inequality},
        date={1966},
     journal={Ann. Scuola Norm. Sup. Pisa (3)},
      volume={20},
       pages={733\ndash 737},
      review={\MR{0208360 (34 \#8170)}},
}

\bib{Qing1995singularities}{article}{
      author={Qing, Jie},
       title={On singularities of the heat flow for harmonic maps from surfaces
  into spheres},
        date={1995},
        ISSN={1019-8385},
     journal={Comm. Anal. Geom.},
      volume={3},
      number={1-2},
       pages={297\ndash 315},
      review={\MR{1362654 (97c:58154)}},
}

\bib{Qing2003remark}{article}{
      author={Qing, Jie},
       title={A remark on the finite time singularity of the heat flow for
  harmonic maps},
        date={2003},
        ISSN={0944-2669},
     journal={Calc. Var. Partial Differential Equations},
      volume={17},
      number={4},
       pages={393\ndash 403},
         url={http://dx.doi.org/10.1007/s00526-002-0176-7},
      review={\MR{1993961 (2004g:53071)}},
}

\bib{Riviere2007Conservation}{article}{
      author={Rivi{\`e}re, Tristan},
       title={Conservation laws for conformally invariant variational
  problems},
        date={2007},
        ISSN={0020-9910},
     journal={Invent. Math.},
      volume={168},
      number={1},
       pages={1\ndash 22},
         url={http://dx.doi.org/10.1007/s00222-006-0023-0},
      review={\MR{2285745 (2008d:58010)}},
}

\bib{RiviereStruwe2008Partial}{article}{
      author={Rivi{\`e}re, Tristan},
      author={Struwe, Michael},
       title={Partial regularity for harmonic maps and related problems},
        date={2008},
        ISSN={0010-3640},
     journal={Comm. Pure Appl. Math.},
      volume={61},
      number={4},
       pages={451\ndash 463},
         url={http://dx.doi.org/10.1002/cpa.20205},
      review={\MR{2383929}},
}

\bib{SacksUhlenbeck1981existence}{article}{
      author={Sacks, J.},
      author={Uhlenbeck, K.},
       title={The existence of minimal immersions of {$2$}-spheres},
        date={1981},
        ISSN={0003-486X},
     journal={Ann. of Math. (2)},
      volume={113},
      number={1},
       pages={1\ndash 24},
         url={http://dx.doi.org/10.2307/1971131},
      review={\MR{604040 (82f:58035)}},
}

\bib{Schikorra2010remark}{article}{
      author={Schikorra, Armin},
       title={A remark on gauge transformations and the moving frame method},
        date={2010},
        ISSN={0294-1449},
     journal={Ann. Inst. H. Poincar\'e Anal. Non Lin\'eaire},
      volume={27},
      number={2},
       pages={503\ndash 515},
         url={http://dx.doi.org/10.1016/j.anihpc.2009.09.004},
      review={\MR{2595189 (2011d:58039)}},
}

\bib{SchoenUhlenbeck1982regularity}{article}{
      author={Schoen, Richard},
      author={Uhlenbeck, Karen},
       title={A regularity theory for harmonic maps},
        date={1982},
        ISSN={0022-040X},
     journal={J. Differential Geom.},
      volume={17},
      number={2},
       pages={307\ndash 335},
         url={http://projecteuclid.org/euclid.jdg/1214436923},
      review={\MR{664498}},
}

\bib{Schoen1984harmonic}{incollection}{
      author={Schoen, Richard~M.},
       title={Analytic aspects of the harmonic map problem},
        date={1984},
   booktitle={Seminar on nonlinear partial differential equations ({B}erkeley,
  {C}alif., 1983)},
      series={Math. Sci. Res. Inst. Publ.},
      volume={2},
   publisher={Springer, New York},
       pages={321\ndash 358},
         url={http://dx.doi.org/10.1007/978-1-4612-1110-5_17},
      review={\MR{765241 (86b:58032)}},
}

\bib{Struwe1985evolution}{article}{
      author={Struwe, Michael},
       title={On the evolution of harmonic mappings of {R}iemannian surfaces},
        date={1985},
        ISSN={0010-2571},
     journal={Comment. Math. Helv.},
      volume={60},
      number={4},
       pages={558\ndash 581},
         url={http://dx.doi.org/10.1007/BF02567432},
      review={\MR{826871 (87e:58056)}},
}

\bib{TaoTian2004singularity}{article}{
      author={Tao, Terence},
      author={Tian, Gang},
       title={A singularity removal theorem for {Y}ang-{M}ills fields in higher
  dimensions},
        date={2004},
        ISSN={0894-0347},
     journal={J. Amer. Math. Soc.},
      volume={17},
      number={3},
       pages={557\ndash 593},
         url={http://dx.doi.org/10.1090/S0894-0347-04-00457-6},
      review={\MR{2053951}},
}

\bib{Topping2004Winding}{article}{
      author={Topping, Peter},
       title={Winding behaviour of finite-time singularities of the harmonic
  map heat flow},
        date={2004},
        ISSN={0025-5874},
     journal={Math. Z.},
      volume={247},
      number={2},
       pages={279\ndash 302},
         url={http://dx.doi.org/10.1007/s00209-003-0582-3},
      review={\MR{2064053}},
}

\bib{Uhlenbeck1982Connections}{article}{
      author={Uhlenbeck, Karen~K.},
       title={Connections with {$L^{p}$}bounds on curvature},
        date={1982},
        ISSN={0010-3616},
     journal={Comm. Math. Phys.},
      volume={83},
      number={1},
       pages={31\ndash 42},
         url={http://projecteuclid.org/euclid.cmp/1103920743},
      review={\MR{648356 (83e:53035)}},
}

\bib{Wang2004Biharmonic}{article}{
      author={Wang, Changyou},
       title={Biharmonic maps from {$\bold R^4$} into a {R}iemannian manifold},
        date={2004},
        ISSN={0025-5874},
     journal={Math. Z.},
      volume={247},
      number={1},
       pages={65\ndash 87},
         url={http://dx.doi.org/10.1007/s00209-003-0620-1},
      review={\MR{2054520}},
}

\bib{Wang2004Stationary}{article}{
      author={Wang, Changyou},
       title={Stationary biharmonic maps from {$\Bbb R^m$} into a {R}iemannian
  manifold},
        date={2004},
        ISSN={0010-3640},
     journal={Comm. Pure Appl. Math.},
      volume={57},
      number={4},
       pages={419\ndash 444},
         url={http://dx.doi.org/10.1002/cpa.3045},
      review={\MR{2026177}},
}

\end{biblist}
\end{bibdiv}

\end{document}